\documentclass[11pt]{amsart}

\usepackage{amsmath,amssymb,amsthm,mathtools}
\usepackage[protrusion=true,expansion=false]{microtype}
\usepackage[colorlinks=true,linkcolor=blue,citecolor=blue,urlcolor=blue]{hyperref}

\theoremstyle{plain}
\newtheorem{theorem}{Theorem}[section]
\newtheorem{proposition}[theorem]{Proposition}
\newtheorem{lemma}[theorem]{Lemma}
\newtheorem{corollary}[theorem]{Corollary}

\theoremstyle{remark}
\newtheorem{remark}[theorem]{Remark}

\theoremstyle{definition}

\newcommand{\EE}{\mathbb{E}}
\newcommand{\PP}{\mathbb{P}}
\newcommand{\R}{\mathbb{R}}

\newcommand{\cE}{\mathcal{E}}

\DeclareMathOperator{\dist}{dist}
\DeclareMathOperator{\sr}{sr}
\DeclareMathOperator{\tr}{tr}
\DeclareMathOperator{\Var}{Var}

\newcommand{\bbP}{\PP}

\newcommand{\eps}{\varepsilon}

\newcommand{\one}{\mathbf{1}}

\DeclareMathOperator{\Span}{span}
\DeclareMathOperator{\rank}{rank}

\renewcommand{\le}{\leqslant}
\renewcommand{\ge}{\geqslant}

\begin{document}

\title[Intermediate Singular Values]{Intermediate Singular Values of Random Matrices\\
under Second-Moment and Anti-Concentration Assumptions}

\author{Manuel Fernandez V}
\address{Department of Mathematics, University of Southern California, Los Angeles, CA 90089.}
\email{manuelf7@usc.edu}

\author{Achintya Raya Polavarapu}
\address{School of Mathematics, Georgia Institute of Technology, Atlanta, GA 30332.}
\email{apolavarapu6@gatech.edu}

\date{}

\begin{abstract}
Let $A=(\xi_{ij})$ be an $n\times n$ random matrix with independent, not
necessarily identically distributed, real entries satisfying
\[
  \mathbb E\xi_{ij}=0,\qquad
  \mathbb E\xi_{ij}^{2}=1,\qquad
  \sup_{z\in\mathbb R}\mathbb P(|\xi_{ij}-z|<a)\le b
\]
for fixed $a>0$ and $b\in(0,1)$. We prove that, for every
$\delta\in(0,1)$, there are constants $c,C>0$, depending only on
$a,b,\delta$, such that
\[
  \mathbb P\left(
    s_{n+1-l}(A)>Ct\frac l{\sqrt n}
  \right)
  \le
  \exp\!\left(-c\min\{tl,n\}\right)
\]
for every $t\ge1$ and every $1\le l\le(1-\delta)n$. Thus, with no moment
assumption beyond variance, all but a fixed proportion of the largest singular
values satisfy the optimal upper bound of order $l/\sqrt n$ with an
exponential upper-tail estimate. Combined with the rectangular
least-singular-value bound of Dabagia--Fern\'andez~\cite{Dabagia24}, this gives
$s_{n+1-l}(A)\asymp l/\sqrt n$ with failure
probability exponentially small in $l$. The same argument gives the
rectangular scale $\sqrt{N+1}-\sqrt{n-l+1}$ for $N\times n$ matrices whenever
$N-n+l\le(1-\delta)N$.
\end{abstract}

\subjclass[2020]{Primary 60B20; Secondary 15B52, 60F10.}

\maketitle

\section{Introduction}

\noindent Let
\[
  s_1(A)\ge \cdots\ge s_n(A)\ge0
\]
denote the singular values of an \(n\times n\) matrix \(A\). The smallest singular value $s_n(A)$ is a measurement of how well-conditioned the matrix is. The quantity naturally appears in the study of the stability of numerical methods, the classical example being solving the linear system $Ax=b$ where $A$ is invertible. When $b$ is replaced by the perturbation $b + \Delta b$ the perturbed solution $\tilde{x} = A^{-1}(b+\Delta b)$ can be far from the true solution $x = A^{-1}b$. On the other hand this difference is controlled by. 
\[ 
\frac{\|x-\tilde{x}\|_2}{\|\Delta b\|_2} \le \|A^{-1}\|_{op} = s_n(A)^{-1},  
\]
meaning the true solution and perturbed solutions are close when $\|\Delta b\|_2$ and $s_n(A)^{-1}$ are not too large. 
More generally, for invertible \(A\),
\[
  s_{n+1-l}(A)^{-1}=s_l(A^{-1}).
\]
Thus \(s_{n+1-l}(A)\) measures how well-conditioned $A$ is when one restricts to perturbations that are perpendicular to all but the worst $l-1$ directions. Therefore upper bounds on singular values indicate how ill-conditioned the matrix is. For a random matrix with independent and standardized entries, universality and convergence to the Marchenko-Pastur law tell us that the the natural scale for bulk singular values is of order $n^{1/2}$. In addition, for an $n \times (n-l)$ random matrix it is known that the smallest singular value is typically of order at least $l/\sqrt{n}$ so Cauchy interlacing provides the same scale for the $l$'th smallest singular value.

\medskip\noindent
The non-asymptotic study of small singular values begins with the Gaussian
work of Edelman~\cite{Edelman88,Edelman91} and Szarek~\cite{Szarek91}, and
continues through the Littlewood--Offord approach of Rudelson and
Vershynin~\cite{RV08,RV08_least,RV09}; see also
\cite{LPRT05,AGLPT08,GLPTJ17,TV09,LR12,BR17,RT18,GL21,LivTikVer,Dabagia24,
fernandezDistance2025}. For the present paper, the closest predecessor is
Wei's theorem~\cite{Feng_Wei_intermediate}, which proves the corresponding
upper bound for square matrices with i.i.d.\ standardized subgaussian entries:
\begin{equation}\label{eq:isv-upper-tail}
  \mathbb{P}\!\left(
    s_{n+1-l}(A) > C\,t\,\frac{l}{\sqrt{n}}
  \right) \le e^{-ctl}.
\end{equation}
For Gaussian matrices, sharper estimates were obtained by
Szarek~\cite{Szarek90}; for least singular values and related local questions,
see for example~\cite{RV08_least,NguyenVu18,Tatarko18,TV10_smallest,
CacciapuotiMaltsevSchlein13}.

\medskip\noindent
In this work we show that \eqref{eq:isv-upper-tail} can be recovered for matrices under much weaker assumptions than those required by Wei's theorem. In particular the assumption that the entries of the matrix have sub-gaussian tails and are identically distributed can be dropped. We only assume that the entries are standardized and satisfy a fixed-scale anti-concentration. The entries may have heavy tails and need not be identically distributed.
The price of this generality is that we must exclude an arbitrarily small fixed fraction of the singular values nearest the upper edge. 
However this is to be expected since the $l$-th smallest singular values of heavy-tailed matrices do not necessarily behave like $l/\sqrt{n}$ as $l/n \to 1$ (see \cite[Remark~1.2]{RebrovaVershynin2018} and
\cite[Section~1.1]{JainSahSawhney2022}). 
\subsection*{Main result}

\medskip\noindent
We use the following assumptions throughout. There are fixed constants $a>0$
and $b\in(0,1)$ such that, for every $i,j$,
\[
  \textnormal{(A1)}\qquad
  \mathbb{E}\xi_{ij}=0,\qquad \mathbb{E}\xi_{ij}^2=1,
\]
and
\[
  \textnormal{(A2)}\qquad
  \sup_{z\in\mathbb{R}}\mathbb{P}(|\xi_{ij}-z|<a)\le b.
\]

\begin{theorem}[Intermediate singular values away from the upper edge]
\label{thm:main}
Let $A=(\xi_{ij})_{i,j=1}^n$ be a real $n \times n$ random matrix with
independent entries satisfying \textnormal{(A1)--(A2)}. For every
$\delta\in(0,1)$,
there exist constants $c,C>0$, depending only on $a,b,\delta$, such that
\begin{equation}\label{eq:main}
  \mathbb{P}\!\left(
    s_{n+1-l}(A) > C t\frac{l}{\sqrt{n}}
  \right)
  \;\le\;
  \exp\!\left(-c\min\!\left\{tl,n\right\}\right)
\end{equation}
for every $t \ge 1$ and every integer $l$ satisfying
\[
  1 \le l \le (1-\delta)n.
\]
\end{theorem}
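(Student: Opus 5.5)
The plan is a Schur-complement / least-squares construction that yields an explicit $\ell$-dimensional subspace on which $A$ is small, followed by a negative-second-moment estimate. Throughout I use the variational characterization
\[
  s_{n+1-l}(A)=\min_{\dim V=l}\ \max_{x\in V,\ \|x\|_2=1}\|Ax\|_2 .
\]

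\textbf{Step 1 (the subspace).} Fix $k=\min\{2l,\lfloor (1-\delta/2) n\rfloor\}$ and split $A=[B\mid D]$, where $B$ is $n\times(n-k)$ and $D$ is $n\times k$. Let $P$ be the orthogonal projection onto $(\operatorname{range}B)^\perp$, which has dimension $k$ almost surely after a standard perturbation argument. For $y\in\R^k$ put $x(y)=(-B^{+}Dy,\,y)$. Then $Ax(y)=PDy$ and $\|x(y)\|_2\ge\|B^{+}Dy\|_2$. Restricting $y$ to the span of the top $l$ right singular vectors of $B^{+}D$ gives
\[
  s_{n+1-l}(A)\le \frac{\|PD\|}{s_l(B^{+}D)} .
\]
So it suffices to show two things:
\[
  \|PD\|\lesssim \sqrt{tk}
  \quad\text{and}\quad
  s_l(B^{+}D)\gtrsim \sqrt{n/k},
\]
each off an event of probability $e^{-c\min\{tl,n\}}$. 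When $tl\ge n$ the target threshold is $\gtrsim\sqrt n$, and I would rerun the same argument with $k\asymp\delta n$. This is where the restriction $l\le(1-\delta)n$ is used, since it keeps $n-k\ge \delta n/2$ columns in $B$.

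\textbf{Step 2 (numerator).} Condition on $B$, so that $P$ is fixed with rank $k$ and independent of $D$. Then $\|PD\|_{HS}^2=\sum_{j}\|PD_j\|^2$ is a sum of $k$ independent nonnegative terms, each with mean $k$ by (A1). Heavy tails obstruct a direct upper-tail bound. I would therefore bound $\|PD\|$ through a net on $S^{k-1}\times(\operatorname{range}P\cap S^{n-1})$, or, more robustly, bound the quantity $\max_{\|y\|=1}\|PDy\|/\|x(y)\|$ directly. Only an upper bound holding with probability $1-e^{-ctk}$ at level $\sqrt{tk}$ is needed. For this I would apply a Markov/Chernoff argument to truncated entries and handle the few large entries by a rank-one perturbation count; Weyl's inequality lets me discard $l/2$ singular directions.

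\textbf{Step 3 (denominator, main obstacle).} Again condition on $B$, so that $M:=B^{+}$ is fixed. By the negative second moment identity,
\[
  \|M\|_{HS}^2=\sum_{i\le n-k}\dist(B_i,H_i)^{-2},
\]
where $H_i$ is the span of the other columns of $B$ and has codimension $k+1$. The distance lower bounds under (A2) (Rudelson--Vershynin type, as in \cite{fernandezDistance2025,Dabagia24}) give $\dist(B_i,H_i)^2\asymp k$ with exponentially small failure probability. Hence $\|M\|_{HS}^2\asymp n/k$, and $\EE\|MD\|_{HS}^2=k\|M\|_{HS}^2\asymp n$. I then use
\[
  s_l(MD)^2\ \ge\ \frac{\|MD\|_{HS}^2-l\,s_1(MD)^2}{k},
\]
so I need a lower tail for $\|MD\|_{HS}^2$ and an upper bound $s_1(MD)^2\le \|MD\|_{HS}^2/(2l)$.

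The lower tail follows from anti-concentration alone. Each $\|MD_j\|^2$ is a quadratic form in an independent vector, and (A2) with a tensorization / small-ball argument gives
\[
  \PP\bigl(\|MD_j\|^2\le c\|M\|_{HS}^2\bigr)\le 1/2 .
\]
Independence across the $k$ columns then yields failure probability $e^{-ck}$.

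The hard part is the upper bound on $s_1(MD)$. The crude estimate $\|M\|\,\|D\|$ is far too lossy, because $\|M\|=s_{\min}(B)^{-1}\sim\sqrt n/k$ by \cite{Dabagia24} and $\|D\|$ is uncontrolled without higher moments. My first attempt is to split $M$ spectrally. Let $M_{\rm top}$ be the part of $M$ on singular directions with $s_j(B)\le \tau k/\sqrt n$; the remaining part has small operator norm relative to its Hilbert--Schmidt mass. The Dabagia--Fern\'andez bound together with interlacing shows that $M_{\rm top}$ has rank $O(k/\tau)$. Its contribution to $MD$ is thus a low-rank perturbation, and Weyl's inequality lets me absorb it by shifting the index $l$ by $O(k/\tau)$, which is affordable if $k=Kl$ with $K$ large. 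The bulk part is controlled by its Hilbert--Schmidt norm through Step 2's truncation method. Collecting the failure probabilities, each $e^{-c\min\{tl,n\}}$, finishes the proof.
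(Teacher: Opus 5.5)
Your Step~1 is correct, and it is the paper's framework in different notation: $B^{+}$ is the comparison matrix of Lemma~\ref{lem:block-biorthogonal} (the Moore--Penrose left inverse of the outside columns), and $\|x(y)\|_2^2=\|y\|_2^2+\|B^{+}Dy\|_2^2$ is its identity (2). The genuine gap is Step~3.

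With $k=2l$, your sufficient condition $s_1(MD)^2\le\|MD\|_{HS}^2/(2l)$ can never hold. Since $MD$ has $2l$ columns, $\|MD\|_{HS}^2\le 2l\,s_1(MD)^2$, with equality only when all singular values coincide. With $k=Kl$ the condition becomes $\|MD\|\lesssim\sqrt{n/l}$, and this fails with non-negligible probability under (A1)--(A2). Indeed $\sum_i\|Me_i\|_2^2=\|M\|_{HS}^2\asymp n/k$, so the row norms $\|Me_i\|_2$ are of order $k^{-1/2}$ on average. A single entry of $D$ with $|\xi_{ij}|\ge C\sqrt{Kn}$ therefore already gives $\|MDe_j\|_2\gtrsim C\sqrt{n/l}$. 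Variance-one entries may put mass of order $1/(C^2Kn)$ at that height, and then $D$ contains about $l/C^2$ such entries in expectation, so the failure probability can be close to $1$.

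The spectral split does not repair this. On the bulk part you only get $\|M_{\rm bulk}D\|\le\|M_{\rm bulk}D\|_{HS}\asymp\sqrt n$, a factor $\sqrt l$ too large. Discarding a few large directions by Weyl's inequality says nothing about whether the remaining columns of $MD$ are spread out, which is exactly what a lower bound on $s_l(MD)$ requires. A similar single-entry example shows that the Step~2 claim $\|PD\|\lesssim\sqrt{tk}$ with probability $1-e^{-ctk}$ is false as stated.

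The missing idea is to avoid operator norms of $MD$ and $PD$ altogether.
\begin{itemize}
\item \emph{Pointwise bound.} For each fixed $y$, the paper gets $\PP(\|MDy\|_2\le c\sqrt{n/l})\le e^{-cl}$ from the stable rank of $M$ alone. Proposition~\ref{prop:one-scale-anisotropic} applies to the vector $Dy$, whose coordinates are independent, have variance one, and are anti-concentrated by Lemma~\ref{lem:scalar-linear-smallball}. The bound $\sr(M)\gtrsim l$ follows from $\|M\|_{HS}\gtrsim\sqrt{n/l}$ and $\|M\|=s_{\min}(B)^{-1}\lesssim\sqrt n/l$ \cite{Dabagia24}.
\item \emph{Uniformity.} This is needed only on a sub-block of $\lambda l$ columns, via the Hilbert--Schmidt net of Theorem~\ref{thm:hs-net}. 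Its $\exp(O(\lambda l\log\frac{1}{\lambda\varepsilon}))$ points are absorbed by $e^{-cl}$. The approximation error needs only $\|A^{-1}P^{\perp}D_J\|_{HS}\lesssim\sqrt{\lambda n}$. That holds after keeping columns with $\|MD_j\|_2\le K\|M\|_{HS}$, by Markov, Chernoff, and a union bound over the $\binom{m_l}{l}$ candidate blocks.
\item \emph{Numerator.} This likewise needs only a Hilbert--Schmidt bound. The paper takes the $l$ shortest projected columns from an oversampled block of $(1+\vartheta)l$ columns, then uses Lemma~\ref{lem:singular-subspace-reduction} to pass to half the directions.
\end{itemize}

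Two further points must also change.
\begin{itemize}
\item \emph{The $t$-dependence.} With $k=2l$ fixed, the denominator event does not depend on $t$ at all, so you would need it to fail with probability $e^{-cn}$. Under heavy tails the numerator event decays only polynomially in $t$. Neither can give $e^{-c\min\{tl,n\}}$. Instead, prove the $t=1$ bound and use $s_{n+1-l}(A)\le s_{n+1-r}(A)$ with $r=\lfloor\min\{tl,(1-\delta)n\}\rfloor$.
\item \emph{The bulk range.} Your construction does not reach it. $B^{+}D$ has rank at most $n-k$, so $s_l(B^{+}D)=0$ once $l>n-k$, which already happens for $l>n/3$ when $k=2l$. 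Taking $k\asymp\delta n$ only produces $k$-dimensional subspaces, too small when $l>k$. For $l\ge c_0n$ the paper uses the separate column-trimming bound $s_{\lceil\delta n\rceil}(A)\lesssim\delta^{-1/2}\sqrt n$ of Lemma~\ref{lem:bulk-trimming}.
\end{itemize}
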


\medskip\noindent
For a fixed $\delta>0$, Theorem~\ref{thm:main} controls $s_j(A)$ for every
$j\ge\lceil\delta n\rceil$ and thus covers all but the largest
$\lceil \delta n \rceil$ singular values. When $tl\le cn$, the deviation profile is
$e^{-ctl}$, matching the form of Wei's subgaussian estimate.

\medskip\noindent
Some restriction near the upper edge is unavoidable under the present
assumptions. Indeed, at
$l=n$ the estimate would imply a bound on the operator norm of order $\sqrt n$ with exponentially small failure probability. Such a bound need not hold for
variance-one heavy-tailed entries, since a single exceptionally large entry
already forces the operator norm to be large. Thus $\delta$ is fixed
independently of $n$, and the constants in the theorem are allowed to depend
on $\delta$. This is also consistent with the examples in
\cite[Remark~1.2]{RebrovaVershynin2018} and
\cite[Section~1.1]{JainSahSawhney2022}. After perturbing these examples by an independent standard gaussian matrices, the resulting random matrices satisfy \textnormal{A1} and \textnormal{A2} and have, with exponentially small failure probability,  
operator norms of order at least $\sqrt{n/\delta}$, even after modifying them on a submatrix of size $\delta n \times \delta n$ .

\subsection*{Consequences}

\medskip\noindent
Together with the rectangular lower-tail estimate of Dabagia--Fern\'andez,
Theorem~\ref{thm:main} gives the following profile.

\begin{corollary}[Square intermediate singular-value profile]\label{cor:square-profile}
Under the assumptions of Theorem~\ref{thm:main}, for every $\delta\in(0,1)$
there exist constants $c,C>0$, depending only on $a,b,\delta$, such that
\[
  \mathbb P\!\left(
    c\frac l{\sqrt n}
    \le
    s_{n+1-l}(A)
    \le
    C\frac l{\sqrt n}
  \right)
  \ge
  1-Ce^{-cl}
\]
for every integer $1\le l\le(1-\delta)n$.
\end{corollary}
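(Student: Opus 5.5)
The corollary is the intersection of two events, and I would bound the probability of each complement separately. For the upper bound I apply Theorem~\ref{thm:main} with $t=1$. This gives
\[
  \PP\bigl(s_{n+1-l}(A)>Cl/\sqrt n\bigr)\le \exp(-c\min\{l,n\}).
\]
Since $l\le(1-\delta)n\le n$, we have $\min\{l,n\}=l$, so this failure probability is at most $e^{-cl}$.

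For the lower bound I would use Cauchy interlacing to reduce to a rectangular least singular value. Let $A_l$ be the $n\times(n-l+1)$ matrix formed by the first $n-l+1$ columns of $A$. Deleting $l-1$ columns does not increase the singular values, so interlacing gives
\[
  s_{n+1-l}(A)\ge s_{n+1-l}(A_l)=s_{\min}(A_l).
\]
The matrix $A_l$ is tall, with $N=n$ rows, $m=n-l+1$ columns, and aspect gap $N-m=l-1$. Its entries are independent and satisfy (A1)--(A2). The rectangular least-singular-value bound of Dabagia--Fern\'andez~\cite{Dabagia24} is the one quoted in the abstract for exactly this setting. In the form I expect it takes, it gives
\[
  \PP\bigl(s_{\min}(A_l)\le \eps(\sqrt{N+1}-\sqrt{m})\bigr)\le (C\eps)^{N-m+1}+e^{-cN}.
\]
Here $N-m+1=l$ and $\sqrt{n+1}-\sqrt{n-l+1}\asymp l/\sqrt n$, with constants depending on $\delta$ because $l\le(1-\delta)n$. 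Taking $\eps$ a small constant, chosen so that $C\eps\le e^{-1}$, bounds the failure probability by $e^{-l}+e^{-cn}\le 2e^{-c'l}$.

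A union bound over the two failure events then gives success probability at least $1-Ce^{-cl}$ after adjusting constants.

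The main obstacle is matching the precise hypotheses and form of the Dabagia--Fern\'andez result. Specifically, I need to check three things:
\begin{itemize}
  \item it allows non-identically distributed entries with only variance and anti-concentration assumptions;
  \item it covers the whole range $1\le N-m+1\le(1-\delta)n$, including the square-like case $l=1$;
  \item its lower-scale normalization is $\sqrt{N+1}-\sqrt m$, or some quantity comparable to $l/\sqrt n$.
\end{itemize}
If that result is instead stated with failure probability of the form $(C\eps)^{N-m+1}+e^{-cN}$ only for the aspect-ratio regime, the very small $l$ case ($l=1$, the square least singular value) is still covered by the same bound, which for $l=1$ has the Rudelson--Vershynin form. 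In the worst case, bounded $l$ only needs a probability bounded below, and this can be absorbed by enlarging the constant $C$ in $1-Ce^{-cl}$.
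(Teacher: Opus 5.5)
Your proposal is correct and follows the paper's proof essentially verbatim. The upper bound is Theorem~\ref{thm:main} with $t=1$, and the lower bound comes from interlacing onto an $n\times(n-l+1)$ column submatrix and applying Theorem~\ref{smallest_sing_val_lower}, using $\sqrt{n+1}-\sqrt{n-l+1}=l/(\sqrt{n+1}+\sqrt{n-l+1})\asymp l/\sqrt n$; this holds with absolute constants, so no $\delta$-dependence is needed for the lower bound. The only discrepancy is that the quoted bound reads $(C\varepsilon\log(1/\varepsilon))^{l}+e^{-cn}$ rather than $(C\varepsilon)^{l}+e^{-cn}$, and it holds for every $l\ge1$ including the square case $l=1$, so a small fixed $\varepsilon$ still suffices and your fallback remarks are unnecessary.
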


\begin{corollary}[Expectation upper bound]\label{cor:intermediate-expectation}
Under the assumptions of Theorem~\ref{thm:main}, for every $\delta\in(0,1)$
there exists a constant $C>0$, depending only on $a,b,\delta$, such that
\[
  \mathbb E s_{n+1-l}(A)\le C\frac l{\sqrt n}
\]
for every integer
\[
  1\le l\le(1-\delta)n.
\]
\end{corollary}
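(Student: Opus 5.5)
The plan is to integrate the tail bound of Theorem~\ref{thm:main} via the layer-cake formula. Write
\[
  X=s_{n+1-l}(A), \qquad \lambda=\frac{l}{\sqrt n}.
\]
Then
\[
  \EE X\le C\lambda+\int_{C\lambda}^\infty\PP(X>u)\,du
  = C\lambda + C\lambda\int_1^\infty \PP(X>Ct\lambda)\,dt .
\]
By Theorem~\ref{thm:main}, the integrand is at most $\exp(-c\min\{tl,n\})$. The difficulty is that this bound saturates at $e^{-cn}$ once $t\ge n/l$, so the integral over large $t$ does not converge from the theorem alone. I will therefore split the integral at $T=n/l$.

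\emph{Range $1\le t\le T$.} Here $\min\{tl,n\}=tl$, and
\[
  \int_1^T e^{-ctl}\,dt\le \frac{e^{-cl}}{cl}\le \frac1c .
\]
This contributes at most $C\lambda/c$, which is of the right order.

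\emph{Range $t>T$, i.e.\ $u>Cn\cdot\sqrt n/n=C\sqrt n$.} Here I avoid the theorem and use only second moments. Since $s_{n+1-l}(A)\le s_1(A)\le\|A\|_{HS}$ and $\EE\|A\|_{HS}^2=n^2$ by (A1), Cauchy--Schwarz gives
\[
  \EE\bigl[X\one_{X>C\sqrt n}\bigr]\le (\EE X^2)^{1/2}\,\PP(X>C\sqrt n)^{1/2}\le n\,e^{-cn/2},
\]
where the probability bound is the theorem applied at $t=T$ (legitimate, since $T\ge1$). It then suffices to check that $n e^{-cn/2}\le C' l/\sqrt n$ for all $l\ge1$. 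This holds because $n^{3/2}e^{-cn/2}$ is bounded uniformly in $n$.

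Concretely, I would write
\[
  \EE X\le \EE\bigl[X\one_{X\le C\sqrt n}\bigr]+\EE\bigl[X\one_{X>C\sqrt n}\bigr].
\]
The first term is bounded by the truncated layer-cake integral, i.e.\ the range $t\le T$ above. The second term is handled by the Hilbert--Schmidt/Cauchy--Schwarz step. The constants depend only on $a,b,\delta$ through $c$ and $C$ from Theorem~\ref{thm:main}.

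The only step that is not routine is the large-$t$ regime, where the theorem's tail stops improving. The second-moment bound on $\|A\|_{HS}$ is what makes the argument work without extra moment assumptions.
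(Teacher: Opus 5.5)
Your proposal is correct and is essentially the paper's argument: truncate at level $C\sqrt n$ (your $t=T=n/l$), integrate the exponential tail from Theorem~\ref{thm:main} below that level, and handle the far tail by Cauchy--Schwarz using $X\le\|A\|_{HS}$, $\EE\|A\|_{HS}^2=n^2$, and $\PP(X>C\sqrt n)\le e^{-cn}$. Your bound $C\lambda/c$ on the middle range is slightly cruder than the paper's $O(n^{-1/2})$, but either suffices.
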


\begin{corollary}[Rectangular intermediate singular-value profile]
\label{cor:rectangular-profile}
Let $B$ be an $N\times n$ random matrix, $N\ge n$, with independent entries
satisfying
\[
  \mathbb{E}B_{ij}=0,\qquad
  \mathbb{E}B_{ij}^2=1,\qquad
  \sup_{z\in\mathbb{R}}\mathbb{P}(|B_{ij}-z|<a)\le b
\]
for some fixed $a>0$ and $b\in(0,1)$. For every
$\delta\in(0,1)$, there exist constants $c,C>0$, depending only on
$a,b,\delta$, such that if
\[
  1\le N-n+l\le(1-\delta)N,
\]
then for every $t\ge1$,
\begin{equation}\label{eq:rectangular-profile}
\begin{aligned}
&\mathbb{P}\!\left(
  ce^{-t}\bigl(\sqrt{N+1}-\sqrt{n-l+1}\bigr)
  \le s_{n+1-l}(B)\right. \\
&\hspace{3.5cm}\left.
  \le Ct\bigl(\sqrt{N+1}-\sqrt{n-l+1}\bigr)
\right)  \\
&\hspace{1.5cm}\ge
1
 -\exp\!\left(-c\min\!\left\{t(N-n+l),N\right\}\right).
\end{aligned}
\end{equation}
\end{corollary}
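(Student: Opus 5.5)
We reduce the statement to two one-sided estimates that combine by a union bound. Set $k=N-n+l$; then $1\le k\le(1-\delta)N$. The scale satisfies
\[
  \sqrt{N+1}-\sqrt{n-l+1}=\frac{k}{\sqrt{N+1}+\sqrt{n-l+1}}\asymp \frac{k}{\sqrt N},
\]
with absolute constants, since $n-l+1\le N+1$. So it suffices to show, up to constants depending on $\delta$, that
\[
  \mathbb P\bigl(s_{n+1-l}(B)>Ctk/\sqrt N\bigr)\le \tfrac12 e^{-c\min\{tk,N\}},
  \qquad
  \mathbb P\bigl(s_{n+1-l}(B)<ce^{-t}k/\sqrt N\bigr)\le \tfrac12 e^{-c\min\{tk,N\}}.
\]
The two estimates use different tools.

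For the upper tail we embed $B$ in a square matrix and apply Theorem~\ref{thm:main}. Let $\tilde A=[\,B\mid G\,]$ be $N\times N$, where $G$ is an independent $N\times(N-n)$ block of standard Gaussians. Then $\tilde A$ has independent entries satisfying (A1)--(A2), with constants still depending only on $a,b$ (Gaussians have a universal anti-concentration bound). Deleting $N-n$ columns can only decrease singular values in the interlacing sense $s_j(B)\le s_j(\tilde A)$. Hence
\[
  s_{n+1-l}(B)\le s_{n+1-l}(\tilde A)=s_{N+1-k}(\tilde A).
\]
Theorem~\ref{thm:main} applied with size $N$ and index $k\le(1-\delta)N$ gives $\mathbb P(s_{N+1-k}(\tilde A)>Ctk/\sqrt N)\le e^{-c\min\{tk,N\}}$, which is the upper estimate.

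For the lower tail we use the Dabagia--Fern\'andez rectangular least-singular-value bound \cite{Dabagia24}. By Courant--Fischer, $s_{n+1-l}(B)\ge s_{\min}(B')$, where $B'$ is the $N\times(n-l+1)$ submatrix formed by any $n-l+1$ columns of $B$. Here $B'$ has aspect gap $N-(n-l+1)+1=k$, so the expected scale of its least singular value is $\sqrt{N+1}-\sqrt{n-l+1}$. I expect \cite{Dabagia24} to give, under (A1)--(A2), a bound of the form
\[
  \mathbb P\bigl(s_{\min}(B')\le \eps(\sqrt{N+1}-\sqrt{n-l+1})\bigr)\le (C\eps)^{k}+e^{-cN}.
\]
Taking $\eps=ce^{-t}$ with $c$ small makes the right side at most $e^{-tk}+e^{-cN}\le 2e^{-c\min\{tk,N\}}$, and adjusting constants gives the lower estimate.

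The main obstacle is matching the exact form of the Dabagia--Fern\'andez result. If it is stated with scale $k/\sqrt N$ rather than the $\sqrt{N+1}-\sqrt{n-l+1}$ form, the two agree up to constants, so this is harmless. If its failure term is a fixed $e^{-cN}$ rather than something like $e^{-c\min\{tk,N\}}$, that is also fine, because $\min\{tk,N\}\le N$. If \cite{Dabagia24} requires $N>n-l+1$ or identically distributed entries, the case $k=1$ is the square least singular value, which is again handled by the bound of \cite{Dabagia24} in its square form. In the corollary's independent-entry setting, identical distribution should not be needed.
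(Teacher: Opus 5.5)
Your proposal is correct and follows the paper's proof. For the upper tail you embed $B$ into an $N\times N$ matrix with an independent extra block and apply Theorem~\ref{thm:main} at level $N-n+l$ via interlacing; for the lower tail you bound $s_{n+1-l}(B)$ below by the least singular value of an $N\times(n-l+1)$ column submatrix and apply Theorem~\ref{smallest_sing_val_lower} with $\varepsilon=\varepsilon_0e^{-t}$.

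Two details need a small adjustment, and neither affects the argument. First, the actual bound is $(C\varepsilon\log(1/\varepsilon))^{N-n+l}+e^{-cN}$ rather than your guessed $(C\varepsilon)^{N-n+l}+e^{-cN}$; the extra $\log(1/\varepsilon)=t+\log(1/\varepsilon_0)$ is absorbed by $e^{-t}$ once $\varepsilon_0$ is small. Second, a Gaussian block need not satisfy (A2) with the same $(a,b)$, so pass to slightly weaker parameters depending only on $a,b$, as your remark already implicitly does.
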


\subsection*{Comparison with rigidity estimates}

\medskip\noindent
Set $X=N^{-1/2}B$, and let
\[
  0\le \lambda_1\le\cdots\le\lambda_n
\]
denote the eigenvalues of $X^*X$. If $\gamma_j$ denotes the corresponding
Marchenko--Pastur classical location, then rigidity results for more regular
sample covariance matrices locate $\lambda_j$ close to $\gamma_j$ on a
microscopic scale (i.e. differences of order $n^{-1}$). For matrices with independent entries and uniformly
bounded normalized moments of every order, Bloemendal, Erd\H{o}s, Knowles,
Yau, and Yin prove Marchenko--Pastur rigidity away from the square lower edge
\cite[Theorem~2.10]{BEKYY14}. In the square case, singular-value rigidity
can be obtained by linearization under stronger moment and variance-profile
assumptions
\cite[Eq.~(1.12) and the discussion following Corollary~1.3]{AEK14}. See
also \cite[Theorem~3]{KM23} for an individual square lower-edge estimate for
an i.i.d. complex ensemble under a fourth-moment bound and a truncation
hypothesis.

\medskip\noindent
Since
\[
  s_{n+1-l}(B)=\sqrt{N\lambda_l},
\]
these results give finer information than a constant-factor estimate when
their hypotheses apply. For example, in the square case the
Marchenko--Pastur locations satisfy
\[
  \gamma_l\sim \left(\frac{\pi l}{2n}\right)^2
\]
near zero \cite[Eq.~(1.9)]{KM23}; hence rigidity with error $o(\gamma_l)$
gives
\[
  s_{n+1-l}(B)
  =\left(\frac{\pi}{2}+o(1)\right)\frac{l}{\sqrt n}.
\]
Our result is different in both assumptions and probability. The cited
rigidity theorems require stronger moment, tail, or variance-profile
conditions, and are usually stated through stochastic domination or an
overwhelming-probability event
\cite[Definition~2.2 and Theorem~3.3]{PY14}
\cite[Eq.~(1.12)]{AEK14}. Corollary~\ref{cor:rectangular-profile} instead
gives a constant-factor scale estimate under \textnormal{(A1)--(A2)} with a large deviation profile exponential in $t$ and $N-n+l$.

\subsection*{Proof strategy}

\medskip\noindent
Our proof strategy for extending Wei's theorem to the current setting is motivated by the following argument for upper bounding bulk singular values near the top edge. To upper bound $s_{2\delta n}(A)$, consider the submatrix $A'$ of $A$ obtained by removing the largest $\delta n$ columns. The sub-matrix has
Hilbert--Schmidt norm of order $n$ with high probability. Further restricting $A'$ to the subspace spanned by all but the top $\delta n$ right singular vectors, the operator norm of $A'$ restricted to this subspace is at most the order of the average length of its columns, which is of order $\sqrt{n^2/n} = \sqrt{n}$. By Cauchy interlacing this is also an upper bound on $s_{2\delta n}(A)$. For $l \ll n$ this argument is not directly applicable as the Hilbert--Schmidt norm of the sub-matrix consisting of the shortest $\delta l$ columns can be of order $\sqrt{nl}$ instead of $l/\sqrt{n}$. However if we replace $A$ with $PA'$, where $P$ is the orthogonal projection onto a fixed $O(l)$-dimensional subspace and $PA'$ is the sub-matrix of $PA$ consisting of its smallest $l$ columns, then the same argument as above implies that with high probability there exists a subspace $H$ of dimension $O(l)$ for which the restriction of $PA'$ to $H$ has operator norm of order $\sqrt{l}$.

\medskip \noindent 
Wei's proof for the upper bound of intermediate singular values of sub-gaussian matrices is based on an inverse-matrix argument whereby upper bounding $s_l(A)$ follows from lower bounding the operator norm of $A^{-1}$ on a well-conditioned subspace of dimension $O(l)$. In Wei's proof one can take $H$ to be the span of the comparison matrix $P_l^{\perp}A_l$, where $A_l$ consists of the first $l$ columns of $l$ and $P_l^{\perp}$ is the projection operator onto the orthogonal complement of the other columns of $A$. Our proof also follows this inverse-matrix argument but constructs the subspace based on the earlier observation. Roughly speaking our subspace is the span of the comparison matrix $P_{(1+2\eps)l}^{\perp}A_{(1+2\eps)l}'$, where $P_{(1+2\eps)l}^{\perp}A_{(1+2\eps)l}'$ consists of the $(1+\eps)l$ shortest columns of $P_{(1+2\eps)l}^\perp A_{(1+2\eps)l}$, restricted to its $l$ smallest right singular vectors. Because the columns of $A$ are independent $P_{(1+2\eps)}^{\perp}$ can be treated as fixed after conditioning on all other columns of $A$, while leaving $A_{(1+2\eps)l}$ unaffected. Besides the ensuing analysis, two estimates
replace the subgaussian inputs in Wei's proof: a one-scale small-ball bound
for linear images of product measures and a Hilbert--Schmidt bound for the
comparison matrix. At small codimension this bound uses the distance theorem
of Fern\'andez and weak-\(L^p\) summation; above logarithmic codimension it
follows from one-scale small ball applied directly to column-to-subspace
distances.

\subsection*{Organization of the paper}

To the best of our knowledge, this is the first optimal-scale exponential
upper-tail estimate for intermediate singular values under only mean zero,
variance one, and uniform
anti-concentration, without any subgaussian input. Section~2 fixes notation,
proves the small-ball tools, and records the external estimates and
deterministic identities used later. Section~3 proves the lower-edge
fixed-scale estimate by the inverse-matrix method. Section~4 completes the
bulk range by column trimming, recovers the deviation parameter by
monotonicity, and proves the square and rectangular consequences.

\subsection*{Acknowledgements}

The authors would like to thank Galyna Livshyts for numerous helpful
discussions and feedback. The authors used ChatGPT Pro during the preparation
of this manuscript, including for discussions that helped improve the result
to the present range of $l$, proof checking, organization, and language editing. The
authors reviewed all outputs used in the manuscript and take responsibility
for the final text.

\section{Preliminaries}\label{sec:preliminaries}

This section fixes notation and collects the inputs used in the proof.

\subsection{Standing assumptions and notation}

Throughout, $c,C>0$ denote constants depending only on the
anti-concentration parameters $a$ and $b$, unless additional dependence is
indicated. Their values may change from line to line. Dependence on
$\delta$ or other auxiliary parameters is displayed by a subscript.

For a positive integer $m$, we write $[m] := \{1,\dots,m\}$, and
$S^{m-1} \subset \mathbb{R}^m$ denotes the Euclidean unit sphere. The vectors
$e_1,\dots,e_n$ are the standard basis of $\mathbb{R}^n$. For a subspace
$E \subset \mathbb{R}^n$, we write $P_E$ for the orthogonal projection onto
$E$, and $\dist(x,E)$ for the Euclidean distance from $x$ to $E$.

For a real matrix $M$, $\|M\|$ denotes the operator norm and $\|M\|_{HS}$ the
Hilbert--Schmidt norm. If $M$ is an $N \times n$ matrix, its singular values
are written
\[
  s_1(M) \ge s_2(M) \ge \cdots \ge s_{\min(N,n)}(M) \ge 0.
\]
The Lévy concentration function of a random variable or vector $Z$ is
\[
  \mathcal{L}(Z, t)
  \;:=\;
  \sup_{u}\mathbb{P}(\|Z-u\|_2 \le t),
  \qquad t \ge 0,
\]
where the supremum is over $u \in \mathbb{R}$ if $Z$ is scalar and over
$u \in \mathbb{R}^m$ if $Z$ is vector-valued. For a nonzero matrix $D$, its
stable rank is
\[
  \sr(D) := \frac{\|D\|_{HS}^2}{\|D\|^2}.
\]
We use the convention \(\sr(0)=0\).

All entries $\xi = A_{ij}$ of the random matrix $A$ are assumed to satisfy:
\begin{enumerate}
  \item[(A1)] $\mathbb{E}\xi = 0$ and $\mathbb{E}\xi^2 = 1$.
  \item[(A2)] $\sup_{z\in\mathbb R}\PP(|\xi-z|<a)\le b$ for fixed constants
    $a > 0$ and $b \in (0,1)$.
\end{enumerate}
Unless explicitly stated otherwise, no further moment conditions are imposed.
The entries need not be identically distributed; we require only that
\textnormal{(A1)--(A2)} hold with the same constants $a,b$ for every entry.
Since $\mathcal L$ was defined using closed balls, \textnormal{(A2)} implies
$\mathcal L(\xi,a/2)\le b$. We use this consequence whenever a closed-ball
L\'evy concentration bound is invoked for an entry.

\subsection{Small-ball estimates}

We begin with the small-ball estimates used for product measures. The
subgaussian concentration estimates used in
\cite[Theorems~2.4 and~2.5]{Feng_Wei_intermediate} do not apply to
\(X_S(y)=A_Sy\) under \textnormal{(A1)--(A2)}. The scalar and matrix-valued
small-ball tools below are used in their place.

\medskip\noindent
\textit{Linear-image concentration estimates.}

The first lemma extracts from \textnormal{(A2)} a uniform small-ball bound for
normalized linear combinations. It will be applied to the coordinates of
vectors of the form \(A_Sy\).

\begin{lemma}[Scalar small ball for normalized linear combinations]
\label{lem:scalar-linear-smallball}
There exist constants $\rho=\rho(a,b)>0$ and
$\kappa=\kappa(a,b)\in(0,1)$ such that the following holds. Let
$\xi_1,\dots,\xi_m$ be independent real random variables satisfying
\[
  \sup_{z\in\mathbb R}\mathbb P(|\xi_j-z|<a)\le b
  \qquad\text{for every }j.
\]
Then, for every $x=(x_1,\dots,x_m)\in S^{m-1}$,
\[
  \mathcal L\!\left(\sum_{j=1}^m x_j\xi_j,\rho\right)\le \kappa.
\]
\end{lemma}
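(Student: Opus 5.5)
The plan is to symmetrize, which removes the unknown centering and needs no moment assumptions, and then reduce to a Rademacher sum with random coefficients. Set $S=\sum_j x_j\xi_j$ and let $(\xi_j')$ be an independent copy of $(\xi_j)$, with $S'=\sum_j x_j\xi_j'$ and $\eta_j=\xi_j-\xi_j'$. For any $u\in\R$, independence of $S$ and $S'$ gives
\[
  \PP(|S-u|\le\rho)^2=\PP(|S-u|\le\rho,\ |S'-u|\le\rho)\le \PP\Bigl(\Bigl|\sum_j x_j\eta_j\Bigr|\le 2\rho\Bigr).
\]
Hence it suffices to bound the right-hand side by a constant $\kappa_0<1$, and then take $\kappa=\sqrt{\kappa_0}$. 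Conditioning on $\xi_j'$ and applying the hypothesis with $z=\xi_j'$ gives $\PP(|\eta_j|<a)\le b$, so $\PP(|\eta_j|\ge a)\ge 1-b$. Since $\eta_j$ is symmetric, $(\eta_j)$ has the same joint law as $(\varepsilon_j|\eta_j|)$, where $(\varepsilon_j)$ are independent Rademacher signs independent of everything else.

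Next, condition on the moduli $|\eta_j|$ and put $w_j=x_j|\eta_j|$. For the Rademacher sum $R=\sum_j\varepsilon_j w_j$ we have $\EE R^2=\|w\|_2^2$ and, by Khintchine's inequality with constant $3$, $\EE R^4\le 3\|w\|_2^4$. Applying Paley--Zygmund to $R^2$ gives
\[
  \PP\bigl(R^2\ge \tfrac14\|w\|_2^2\bigr)\ge \frac{(3/4)^2\|w\|_2^4}{3\|w\|_2^4}=\frac{3}{16}.
\]
So $|R|\ge \|w\|_2/2$ with conditional probability at least $3/16$, regardless of the coefficients.

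It remains to show that $\|w\|_2$ is at least a constant with probability bounded below. We have
\[
  \|w\|_2^2\ge a^2 Y,\qquad Y:=\sum_j x_j^2\,\one\{|\eta_j|\ge a\}.
\]
Since $x\in S^{m-1}$, we have $Y\in[0,1]$ and $\EE Y\ge 1-b$. Because $Y\le 1$,
\[
  \EE Y\le \PP\bigl(Y\ge \tfrac{1-b}{2}\bigr)+\tfrac{1-b}{2},
\]
so $\PP\bigl(Y\ge (1-b)/2\bigr)\ge (1-b)/2$. Choose $\rho=\rho(a,b)>0$ with $4\rho< a\sqrt{(1-b)/2}$, say $\rho=\tfrac18 a\sqrt{(1-b)/2}$. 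On the event $\{Y\ge(1-b)/2\}$ this gives $\|w\|_2/2>2\rho$.

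Combining the two steps, $\PP\bigl(|\sum_j x_j\eta_j|>2\rho\bigr)\ge \tfrac{3(1-b)}{32}$. Therefore the claim holds with
\[
  \kappa_0=1-\tfrac{3(1-b)}{32},\qquad \kappa=\sqrt{\kappa_0}\in(0,1),
\]
both depending only on $a$ and $b$.

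There is no serious obstacle here. The only points that need care are that no moments of $\xi_j$ enter anywhere, since the Paley--Zygmund step uses moments of the Rademacher signs only, and that strict versus non-strict inequalities are handled by the slack in the choice of $\rho$.
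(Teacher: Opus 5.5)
Your argument is correct, and it takes a genuinely different route from the paper's.

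\textbf{The paper's route.} The paper splits according to $\|x\|_\infty$. If some $|x_{j_0}|\ge\eta$, it conditions on the remaining variables and applies the one-variable hypothesis to $\xi_{j_0}$ directly. If $\|x\|_\infty<\eta$, it applies the Kolmogorov--Rogozin inequality with radii $r_j=a|x_j|/2$, which gives a bound $C\eta/\sqrt{1-b}$. Tuning $\eta$ then yields $\kappa=(1+b)/2$ at radius $\rho=\eta a/2$.

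\textbf{Your route.} You symmetrize, write the symmetrized sum as a Rademacher sum with random coefficients $w_j=x_j|\eta_j|$, and use Paley--Zygmund with the exact fourth moment. This gives $|R|\ge\|w\|_2/2$ with probability at least $3/16$, uniformly in $w$. You then lower-bound $\|w\|_2$ by a reverse-Markov argument on $Y$. This is essentially the scalar version of the symmetrize-and-randomize-signs argument the paper itself uses later in Proposition~\ref{prop:one-scale-anisotropic}, with Paley--Zygmund in place of Talagrand's inequality.

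\textbf{What each approach buys.} The uniformity of the Rademacher Paley--Zygmund bound over all coefficient vectors is what absorbs the ``one large coordinate'' case, which the paper has to treat separately. So you need no case distinction and no external anti-concentration inequality. You also get fully explicit constants, $\rho=\frac a8\sqrt{(1-b)/2}$ and $\kappa=\bigl(1-\frac{3(1-b)}{32}\bigr)^{1/2}$.

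The paper's route gives a sharper $\kappa=(1+b)/2$, close to the single-variable bound $b$. Your symmetrization costs a square root, so your gap $1-\kappa$ is smaller by a constant factor. On the other hand, the paper's radius $\rho$ depends on the unspecified Kolmogorov--Rogozin constant.

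The lemma is only used as input to Proposition~\ref{prop:one-scale-anisotropic}, which needs merely some fixed $\kappa<1$ at some fixed radius. Either version therefore suffices for the rest of the paper.
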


\begin{proof}
Fix $x\in S^{m-1}$. Choose a parameter $\eta\in(0,1)$, to be fixed below, and
set $\rho:=\eta a/2$.

If $\|x\|_\infty\ge \eta$, choose $j_0$ such that $|x_{j_0}|\ge \eta$.
Conditioning on all variables except $\xi_{j_0}$, we obtain for every
$u\in\mathbb R$,
\[
  \mathbb P\!\left(
    \left|\sum_{j=1}^m x_j\xi_j-u\right|\le\rho
    \,\middle|\,
    (\xi_j)_{j\ne j_0}
  \right)
  \le
  \sup_{z\in\mathbb R}
  \mathbb P\!\left(|\xi_{j_0}-z|\le\frac{\rho}{|x_{j_0}|}\right)
  \le b,
\]
because $\rho/|x_{j_0}|\le a/2$. Hence
\[
  \mathcal L\!\left(\sum_{j=1}^m x_j\xi_j,\rho\right)\le b.
\]

Assume now that $\|x\|_\infty<\eta$. We apply the Kolmogorov--Rogozin
concentration inequality in the standard form
\[
  \mathcal L\!\left(\sum_{j=1}^m Z_j,R\right)
  \le
  \frac{CR}
	       {\left(\sum_{j=1}^m r_j^2
	         \bigl(1-\mathcal L(Z_j,r_j)\bigr)\right)^{1/2}},
\]
see \cite[Theorem~1]{Rogozin61}. This is valid for independent random
variables $Z_j$ and parameters
$R\ge \max_j r_j\ge 0$. We use it with
\[
  Z_j:=x_j\xi_j,
  \qquad
  r_j:=\frac a2|x_j|,
  \qquad
  R:=\rho=\frac{\eta a}{2}.
\]
Since $\|x\|_\infty<\eta$, indeed $R\ge \max_j r_j$. Moreover, for every
$j$ with $x_j\ne0$,
\[
  \mathcal L(Z_j,r_j)
  =
  \sup_{z\in\mathbb R}\mathbb P\!\left(|x_j\xi_j-z|\le\frac a2|x_j|\right)
  =
  \sup_{w\in\mathbb R}\mathbb P\!\left(|\xi_j-w|\le\frac a2\right)
  \le b.
\]
For indices with $x_j=0$, the corresponding term in the
Kolmogorov--Rogozin denominator is zero and may be ignored.
Therefore
\[
	  \mathcal L\!\left(\sum_{j=1}^m x_j\xi_j,\rho\right)
	  \le
	  \frac{C\eta a/2}
	       {\left(\sum_{j=1}^m (a^2/4)x_j^2(1-b)\right)^{1/2}}
	  =
	  \frac{C\eta}{\sqrt{1-b}}.
\]
Choose $\eta=\eta(a,b)$ small enough so that
\[
  \frac{C\eta}{\sqrt{1-b}}\le \frac{1+b}{2}.
\]
Then both cases are bounded by
\[
  \kappa:=\frac{1+b}{2}\in(0,1),
\]
which proves the lemma.
\end{proof}

The Bernoulli form of the linear-image estimate is used only in the
symmetrization argument below. Wei uses subgaussian linear-image small-ball
estimates; under \textnormal{(A1)--(A2)} we need the weaker fixed-scale estimate
below, and the stable-rank lower bound later makes it sufficient.

\begin{theorem}[Stable-rank small-ball estimate for Rademacher images]
\label{thm:rademacher-stable-rank-small-ball}
There exist absolute constants $\eta_{\mathrm{rad}}, c_{\mathrm{rad}} > 0$
such that the following holds. Let
$\varepsilon=(\varepsilon_1,\dots,\varepsilon_N)$ be a vector of independent
Rademacher random variables, and let $D$ be a deterministic real matrix. Then
\[
  \sup_{u}
  \mathbb P_\varepsilon\!\left(
    \|D\varepsilon-u\|_2
    \le
    \eta_{\mathrm{rad}}\|D\|_{HS}
  \right)
  \le
  2\exp\!\left(-c_{\mathrm{rad}} \sr(D)\right).
\]
\end{theorem}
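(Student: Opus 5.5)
We will prove the statement by Talagrand's convex-distance inequality for product measures on the cube, combined with Gaussian-type concentration of $\|D\varepsilon\|_2$. Degenerate cases are dismissed first: if $D=0$ the right-hand side is at least $2$ under the convention $\sr(0)=0$, and whenever $\sr(D)$ is below a fixed absolute constant the bound $2\exp(-c_{\mathrm{rad}}\sr(D))\ge1$ is trivial once $c_{\mathrm{rad}}$ is small. So we assume $\sr(D)\ge K$ for a large absolute $K$, and normalize $\|D\|_{HS}=1$, so that $\|D\|=\sr(D)^{-1/2}$.

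The first step reduces the small-ball event to a deviation of a convex Lipschitz function. Fix $u$ and set $f(\varepsilon)=\|D\varepsilon-u\|_2$. This $f$ is convex and $\|D\|$-Lipschitz on $\R^N$. Its second moment is
\[
\EE f^2=\|D\|_{HS}^2+\|u\|_2^2=1+\|u\|_2^2\ge1,
\]
because $\EE\varepsilon=0$ and $\EE\varepsilon\varepsilon^{T}=I$. We then apply Talagrand's concentration inequality for convex Lipschitz functions of independent bounded variables:
\[
\PP(|f-M|\ge s)\le 4\exp\!\left(-\frac{s^2}{16\|D\|^2}\right),
\]
where $M$ is a median of $f$.

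The second step shows that $M$ is at least an absolute constant. Integrating the tail bound gives $\EE(f-M)^2\le C\|D\|^2=C/\sr(D)$. Hence
\[
(\EE f^2)^{1/2}\le M+\bigl(\EE(f-M)^2\bigr)^{1/2}\le M+C\,\sr(D)^{-1/2},
\]
and so $M\ge 1-C/\sqrt{K}\ge 1/2$ when $K$ is large. This is the step I expect to be the main obstacle: the second moment of $f$ must be transferred to the median using only the $\|D\|$-scale concentration, without any lower-tail control coming from the spectrum of $D$. The argument above handles this provided the constants in Talagrand's inequality are tracked. For $u=0$ one could alternatively use Hanson--Wright for Rademacher vectors, but that does not handle a general shift $u$.

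The third step takes $\eta_{\mathrm{rad}}=1/4$. Then
\[
\PP(f\le1/4)\le\PP(|f-M|\ge1/4)\le 4\exp(-c\,\sr(D)).
\]
To reach the stated constant $2$, we absorb the factor: for $\sr(D)\ge K$ with $K$ large we have $4e^{-c\,\sr(D)}\le 2e^{-(c/2)\sr(D)}$, and the small-stable-rank case is already trivial. Finally, taking the supremum over $u$ gives the claim with $c_{\mathrm{rad}}=c/2$. Undoing the normalization $\|D\|_{HS}=1$ is just scaling, which yields the general statement.
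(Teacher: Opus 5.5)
Your proposal is correct and follows essentially the same route as the paper. Both apply Talagrand's convex concentration to $f(\varepsilon)=\|D\varepsilon-u\|_2$, lower-bound the median via $\EE f^2\ge\|D\|_{HS}^2$ once $\sr(D)$ exceeds an absolute constant, and finish with the lower tail at scale $\|D\|_{HS}/4$; where you use Minkowski, the paper uses $f\le m+(f-m)_+$, which makes no difference. One small bookkeeping point: take $c_{\mathrm{rad}}=\min\{c/2,\ln 2/K\}$ rather than $c/2$, so that the case $\sr(D)<K$ really is trivial.
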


\begin{proof}
The case $D=0$ is trivial. Assume $D\ne0$.
Put $h=\|D\|_{HS}$, $\sigma=\|D\|$, and $r=h^2/\sigma^2$. Fix $u$ and set
\[
  f(x):=\|Dx-u\|_2,\qquad x\in\mathbb R^N.
\]
The function $f$ is convex and $\sigma$-Lipschitz. Let $m$ be a median of
$f(\varepsilon)$. Talagrand's convex concentration inequality on the discrete
cube~\cite[Theorem~6.6]{Talagrand96NewLook} gives
\[
  \mathbb P\bigl(f(\varepsilon)\ge m+s\bigr)
  \le 2\exp\!\left(-c\frac{s^2}{\sigma^2}\right),
  \qquad
  \mathbb P\bigl(f(\varepsilon)\le m-s\bigr)
  \le 2\exp\!\left(-c\frac{s^2}{\sigma^2}\right)
\]
for all $s\ge0$. Hence
\[
  \mathbb E(f(\varepsilon)-m)_+^2\le C\sigma^2.
\]
Since
\[
  \mathbb E f(\varepsilon)^2
  =
  \mathbb E\|D\varepsilon-u\|_2^2
  =
  h^2+\|u\|_2^2
  \ge h^2,
\]
and $f\le m+(f-m)_+$, we get
\[
  \mathbb E f^2\le 2m^2+2\mathbb E(f-m)_+^2.
\]
Thus
\[
  h^2\le 2m^2+C\sigma^2.
\]
If $r\ge r_0$ for a large enough absolute constant $r_0$, then $m\ge h/2$.
Taking $\eta_{\mathrm{rad}}\le1/4$ and using the lower-tail estimate,
\[
  \mathbb P\bigl(f(\varepsilon)\le \eta_{\mathrm{rad}}h\bigr)
  \le
  2\exp(-c r).
\]
If $1\le r<r_0$, the same bound is trivial after decreasing $c$. Taking the
supremum over $u$ proves the theorem.
\end{proof}

Combining the scalar estimate with symmetrization gives the form used later in
the proof. Compared with the usual anisotropic concentration bound, the scale
is allowed to shrink by a constant depending on the one-dimensional
anti-concentration parameters; this makes the estimate effective for any
fixed \(\kappa<1\).

\begin{proposition}[Stable-rank small ball for linear images of product measures]
\label{prop:one-scale-anisotropic}
Let $Z=(Z_1,\dots,Z_N)$ have independent real coordinates satisfying
\[
  \mathbb E Z_i^2 \le 1
  \qquad\text{and}\qquad
  \mathcal L(Z_i,\rho)\le \kappa<1
  \quad\text{for all }i.
\]
Then there exist constants $\eta=\eta(\rho,\kappa)>0$ and
$c=c(\rho,\kappa)>0$ such that, for every deterministic real matrix $D$,
\[
  \mathcal L(DZ,\eta\|D\|_{HS})
  \le
  2\exp\!\left(-c\,\sr(D)\right).
\]
\end{proposition}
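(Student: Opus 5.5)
The plan is to reduce to Theorem~\ref{thm:rademacher-stable-rank-small-ball} by symmetrization, conditioning on the magnitudes of the symmetrized coordinates. Write $h=\|D\|_{HS}$ and $\sigma=\|D\|$, and let $Z'$ be an independent copy of $Z$. For any $u$ and $t>0$,
\[
\mathbb P(\|DZ-u\|_2\le t)^2=\mathbb P(\|DZ-u\|_2\le t,\ \|DZ'-u\|_2\le t)\le \mathbb P(\|D(Z-Z')\|_2\le 2t).
\]
Hence it suffices to bound $\mathbb P(\|D(Z-Z')\|_2\le 2t)$ by $C e^{-c\,\sr(D)}$ with $t=\eta h$. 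The vector $Z-Z'$ has independent symmetric coordinates, so it has the same law as $(\varepsilon_iW_i)_i$. Here $W_i=|Z_i-Z_i'|$, and $\varepsilon$ is an independent Rademacher vector.

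The magnitudes $W_i$ satisfy two bounds. Conditioning on $Z_i'$ gives
\[
\mathbb P(W_i\le\rho)\le\mathcal L(Z_i,\rho)\le\kappa.
\]
Since $\mathbb E W_i^2\le 4$, we also have $\mathbb P(W_i>M)\le 4/M^2$. Choose $M=M(\kappa)$ with $4/M^2\le(1-\kappa)/2$. Then $p:=\mathbb P(\rho<W_i\le M)\ge(1-\kappa)/2$ for every $i$. Let
\[
I=\{i:\rho<W_i\le M\}.
\]
The indicators $\one_{i\in I}$ are independent with means at least $p$. Let $d_i$ be the $i$-th column of $D$, so $\|d_i\|_2^2\le\sigma^2$. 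The quantity $\|D_I\|_{HS}^2=\sum_i\|d_i\|_2^2\one_{i\in I}$ is a sum of independent terms bounded by $\sigma^2$, with mean at least $p h^2$. A Chernoff/Bernstein bound for weighted sums of bounded variables (or Hoeffding with $\sum\|d_i\|_2^4\le\sigma^2h^2$) gives
\[
\mathbb P\bigl(\|D_I\|_{HS}^2<\tfrac p2h^2\bigr)\le\exp(-c p^2 h^2/\sigma^2)=\exp(-c_\kappa\,\sr(D)).
\]

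Now condition on $W$, on the good event where $\|D_I\|_{HS}^2\ge \tfrac p2 h^2$, and on $(\varepsilon_i)_{i\notin I}$. The contribution of the coordinates outside $I$ is then a fixed shift, which is absorbed into the supremum over $u$ in Theorem~\ref{thm:rademacher-stable-rank-small-ball}. We apply that theorem to $D'=D_I\,\mathrm{diag}(W_I)$. On this event,
\[
\|D'\|_{HS}\ge\rho\|D_I\|_{HS}\ge\rho\sqrt{p/2}\,h,
\qquad
\|D'\|\le M\sigma,
\]
so $\sr(D')\ge \frac{\rho^2p}{2M^2}\,\sr(D)$. Take
\[
\eta:=\tfrac12\eta_{\mathrm{rad}}\rho\sqrt{p/2}.
\]
Then $2t=2\eta h\le\eta_{\mathrm{rad}}\|D'\|_{HS}$, and the conditional probability is at most $2\exp(-c_{\mathrm{rad}}\rho^2p\,\sr(D)/(2M^2))$. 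Integrating over the conditioning gives
\[
\mathcal L(DZ,\eta h)^2\le 3\exp(-c'\,\sr(D)).
\]
Taking square roots yields $\mathcal L(DZ,\eta h)\le\sqrt3\,e^{-c'\sr(D)/2}\le 2e^{-c\,\sr(D)}$ with $c=c'/2$, since $\sqrt3<2$.

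The main obstacle is the unboundedness of the $W_i$, which could inflate $\|D'\|$ and destroy the stable-rank lower bound. The truncation at level $M$, which uses only $\mathbb E Z_i^2\le1$, is what controls this. The other delicate point is that $\|D_I\|_{HS}$ must retain a constant fraction of $h$ with failure probability $e^{-c\,\sr(D)}$ rather than merely a constant probability. This is where the bound $\|d_i\|_2\le\sigma$ on the weights is essential.
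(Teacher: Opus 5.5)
Your proposal is correct and follows essentially the same route as the paper: symmetrize; keep the coordinates with $\rho<|Z_i-Z_i'|\le M$, using Chebyshev for the upper cutoff; apply Hoeffding with weights $\|De_i\|_2^2\le\|D\|^2$ so that a constant fraction of $\|D\|_{HS}^2$ survives outside probability $e^{-c\,\sr(D)}$; then condition on the magnitudes, apply Theorem~\ref{thm:rademacher-stable-rank-small-ball} to $D_I\,\mathrm{diag}(W_I)$, whose stable rank is comparable to $\sr(D)$, and take square roots. The differences are cosmetic: you write $Z-Z'$ as $(\varepsilon_iW_i)_i$ with independent signs, use the cruder bound $\mathbb E W_i^2\le 4$, and lose a factor $2$ in $\eta$; just note that $p$ should be defined as the uniform lower bound $(1-\kappa)/2$ rather than as $\mathbb P(\rho<W_i\le M)$, since the coordinates need not be identically distributed.
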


\begin{proof}
If $D=0$, the claim is trivial. Assume $D\ne0$.
Let $Z'$ be an independent copy of $Z$, and set
\[
  \widetilde Z:=Z-Z'.
\]
For every $s>0$ and every center $u$,
\[
  \mathbb P(\|DZ-u\|_2\le s)^2
  \le
  \mathbb P(\|D\widetilde Z\|_2\le 2s).
\]
Thus it is enough to control the small-ball probability of $D\widetilde Z$.

For each coordinate, conditional on $Z_i'$, the assumption
$\mathcal L(Z_i,\rho)\le \kappa$ gives
\[
  \mathbb P(|Z_i-Z_i'|\ge \rho\mid Z_i')\ge 1-\kappa.
\]
Also,
\[
  \mathbb E|Z_i-Z_i'|^2 = 2\Var(Z_i)\le 2\mathbb E Z_i^2 \le 2.
\]
Choose
\[
  M:=\frac{2}{\sqrt{1-\kappa}},
  \qquad
  \delta:=\frac{1-\kappa}{2}.
\]
Then
\[
  \mathbb P(\rho\le |\widetilde Z_i|\le M)
  \ge
  1-\kappa-\frac{2}{M^2}
  =
  \delta.
\]
Let
\[
  \Gamma_i:=\mathbf 1_{\{\rho\le |\widetilde Z_i|\le M\}},
  \qquad
  G:=\{i:\Gamma_i=1\}.
\]
For a fixed matrix $D$, put
\[
  w_i:=\|De_i\|_2^2,
  \qquad
  W:=\sum_i w_i=\|D\|_{HS}^2.
\]
Since $0\le \Gamma_i w_i\le w_i\le \|D\|^2$ and
$\mathbb E(\Gamma_i w_i)\ge \delta w_i$, we have
\[
  \sum_i w_i^2\le \left(\max_i w_i\right)\sum_iw_i
  \le \|D\|^2\|D\|_{HS}^2,
\]
and Hoeffding's inequality yields
\[
  \mathbb P\!\left(
    \sum_{i\in G} w_i < \frac{\delta}{2}W
  \right)
  \le
  \exp\!\left(
    -c\delta^2\frac{W}{\|D\|^2}
  \right)
  =
  \exp\!\left(-c\delta^2 \sr(D)\right).
\]
Call the complementary event $\mathcal G$.

Condition on the set $G$, on the magnitudes $|\widetilde Z_i|$ for $i\in G$,
and on all coordinates $\widetilde Z_i$ with $i\notin G$. Since
$\widetilde Z_i$ is symmetric, the signs of $\widetilde Z_i$ on $G$ are
independent Rademacher variables. Therefore, conditionally,
\[
  D\widetilde Z
  =
  v
  +
  F\varepsilon,
\]
where $v$ is deterministic under the conditioning,
$\varepsilon=(\varepsilon_i)_{i\in G}$ is a Rademacher vector, and
\[
  F
  =
  D_G\operatorname{diag}(|\widetilde Z_i|:i\in G).
\]
On $\mathcal G$,
\[
  \|F\|_{HS}^2
  =
  \sum_{i\in G} |\widetilde Z_i|^2\|De_i\|_2^2
  \ge
  \rho^2\sum_{i\in G}w_i
  \ge
  \frac{\rho^2\delta}{2}\|D\|_{HS}^2,
\]
while
\[
  \|F\|\le M\|D\|.
\]
Hence
\[
  \sr(F)
  =
  \frac{\|F\|_{HS}^2}{\|F\|^2}
  \ge
  \frac{\rho^2\delta}{2M^2}\,\sr(D).
\]

Choose
\[
  \eta
  :=
  \frac{\eta_{\mathrm{rad}}\rho}{4}\sqrt{\frac{\delta}{2}}.
\]
Then on $\mathcal G$,
\[
  2\eta\|D\|_{HS}
  \le
  \eta_{\mathrm{rad}}\|F\|_{HS}.
\]
By Theorem~\ref{thm:rademacher-stable-rank-small-ball},
\[
  \mathbb P_\varepsilon\!\left(
    \|F\varepsilon+v\|_2
    \le
    2\eta\|D\|_{HS}
  \right)
  \le
  2\exp\!\left(-c_{\mathrm{rad}}\sr(F)\right)
  \le
  2\exp\!\left(-c \sr(D)\right),
\]
where $c=c(\rho,\kappa)>0$.

Combining this conditional estimate with the bound on $\mathcal G^c$ gives
\[
  \mathbb P(\|D\widetilde Z\|_2\le 2\eta\|D\|_{HS})
  \le
  3\exp(-c \sr(D)).
\]
Taking square roots in the symmetrization inequality and decreasing $c$ if
necessary yields
\[
  \mathcal L(DZ,\eta\|D\|_{HS})
  \le
  2\exp(-c \sr(D)).
\]
\end{proof}

\medskip\noindent
The estimate gives the exponent \(\sr(D)\). Thus, once a comparison matrix
has stable rank of order \(l\), a fixed-scale small-ball event for its image
has probability at most \(e^{-cl}\). No tail estimate for the coordinates of
\(Z\) is required.

\subsection{Distance and rectangular invertibility estimates}

The next two inputs are lower-tail estimates. The distance theorem is used only
when the relevant codimension is small. The rectangular estimate is used both
inside the inverse-matrix argument and later to obtain the matching lower
bound.

The following is \cite[Theorem~1.1]{fernandezDistance2025}.
\begin{theorem}[Distance to a random subspace]
\label{distance_theorem}
There exist constants $c, C, \lambda > 0$ depending only on $a$ and $b$ such
that the following holds for all $1 \le d \le \lambda n/\log n$. Let
$X \in \mathbb{R}^n$ be a random vector with independent entries satisfying
\textnormal{(A1)--(A2)}, and let
$M \in \mathbb{R}^{n \times (n-d)}$ be an independent random matrix whose
columns satisfy \textnormal{(A1)--(A2)}. Let $H$ denote the column span of
$M$. Then, for every $t\ge0$,
\[
  \mathcal{L}(P_{H^\perp}X,\, t\sqrt{d})
  \;\le\; (Ct)^d + e^{-cn}.
\]
\end{theorem}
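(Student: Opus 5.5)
This is \cite[Theorem~1.1]{fernandezDistance2025}, and in the paper it is imported rather than reproved. If I had to prove it, I would follow the Rudelson--Vershynin route for distances to random subspaces and replace each subgaussian input by an argument that uses only the second moment and (A2).

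\textbf{Conditioning and reduction.} First condition on $M$, so that $E:=H^\perp$ is a fixed subspace of dimension at least $d$. It suffices to work on a $d$-dimensional subspace $E'\subset E$, since $\|P_EX-u\|_2\ge\|P_{E'}X-P_{E'}u\|_2$. The claim then reduces to two statements:
\[
  \mathcal L(P_{E'}X,t\sqrt d)\le (Ct)^d+e^{-cn}\quad\text{whenever }E'\text{ is unstructured},
\]
and $\PP_M(E\text{ is structured})\le e^{-cn}$.

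\textbf{Small ball on an unstructured subspace.} Here "unstructured" means the following. Let $U$ be an $n\times d$ matrix with orthonormal columns spanning $E'$. Define the subspace least common denominator
\[
  \mathrm{LCD}(E')=\inf\bigl\{\|\theta\|_2:\ \theta\in\R^d,\ \dist(U\theta,\mathbb Z^n)<\gamma\min(\|U\theta\|_2,\alpha\sqrt n)\bigr\}.
\]
The first step is the multidimensional Esseen/Hal\'asz inequality, which bounds $\mathcal L(U^TX,t\sqrt d)$ by an integral of $\prod_j|\phi_{\xi_j}(\langle U_j,\theta\rangle)|$ over a ball in $\R^d$ of radius about $\sqrt d/t$. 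The subgaussian assumption is usually used to control these characteristic functions. I would instead symmetrize as in Proposition~\ref{prop:one-scale-anisotropic}: the variable $\tilde\xi=\xi-\xi'$ lies in $[\rho,M]$ in absolute value with probability at least $\delta$, by (A2) and Chebyshev. This gives
\[
  |\phi_\xi(s)|^2\le \exp\bigl(-c\,\EE\,\dist(s\tilde\xi/2\pi,\mathbb Z)^2\mathbf 1_{\rho\le|\tilde\xi|\le M}\bigr).
\]
Averaging over the truncated magnitude, as in Rudelson--Vershynin, then yields
\[
  \mathcal L(P_{E'}X,t\sqrt d)\le \left(\frac{Ct}{\gamma}\right)^d+\left(\frac{C\sqrt d}{\gamma\,\mathrm{LCD}(E')}\right)^d .
\]
So it is enough to show $\mathrm{LCD}(E')\ge \sqrt d\,e^{cn/d}$ with probability $1-e^{-cn}$.

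\textbf{Structure of the random subspace (main obstacle).} The classical proof splits vectors into compressible and incompressible ones and uses nets on level sets $\{\mathrm{LCD}\approx D\}$. Both steps rely on $\|M\|\lesssim\sqrt n$, which fails for heavy tails. I would replace operator-norm approximation by the random-rounding nets of Livshyts (cf.\ \cite{LivTikVer}). These only need $\|M\|_{HS}\lesssim n$, which holds with probability $1-e^{-cn}$ after discarding a small set of heavy rows, and they let one approximate $M^Tv$ for lattice-rounded $v$ with an error controlled in Hilbert--Schmidt norm.

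For a fixed level $D$, the $d$-dimensional structure contributes a net of cardinality roughly $(CD/\sqrt d)^{d}\cdot(CD/\sqrt n)^n$. Each net point survives with probability at most $(C/D)^{n-d}$, using the scalar small ball (Lemma~\ref{lem:scalar-linear-smallball}) tensorized over the $n-d$ independent columns. The union bound closes for $D\le \sqrt d\,e^{cn/d}$. The dyadic summation over levels and the loss from the $d$-dimensional search costs a factor $e^{Cd\log n}$. This is exactly what forces the restriction $d\le\lambda n/\log n$.

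The compressible case is the standard one: a fixed-vector small ball of the form $e^{-cn}$ over a net of sparse vectors. I expect the incompressible level-set union bound without an operator-norm bound to be the crux.
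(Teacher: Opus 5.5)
The paper gives no proof of this statement. It is quoted as \cite[Theorem~1.1]{fernandezDistance2025} and used as a black box, so your outline has to stand on its own. It has a genuine gap, and the gap comes before the operator-norm issue you flag.

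\textbf{The small-ball step fails for non-identically distributed entries.} The Rudelson--Vershynin averaging over the symmetrized magnitude works because all coordinates share one law of $\tilde\xi$. That gives $\sum_j\EE\,\dist\bigl((U\theta)_j\tilde\xi_j/2\pi,\mathbb Z\bigr)^2=\EE_z\,\dist\bigl(zU\theta/2\pi,\mathbb Z^n\bigr)^2$ with a single scalar $z$. Only then do Jensen and the substitution $\theta\mapsto z\theta$ reduce everything to the geometric LCD of $E'$. Here the entries of $X$ need not be identically distributed, and no bound in terms of the geometric LCD of $E'$ alone can hold.

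For a counterexample, take generic $s_1,\dots,s_n\in[1,2]$ and set $\xi_j=s_j\eta_j$ with $\eta_j\in\{-1,0,1\}$ and $\PP(\eta_j=1)=\PP(\eta_j=-1)=1/(2s_j^2)$. These satisfy (A1)--(A2) with $a=1/2$, $b=3/4$. With $w_j:=1/s_j$ and $v:=w/\|w\|_2$,
\[
  \langle v,X\rangle=\|w\|_2^{-1}\sum_{j=1}^n\eta_j\in\|w\|_2^{-1}\mathbb Z,
  \qquad\text{hence}\qquad
  \mathcal L(\langle v,X\rangle,t)\ge\frac{c}{\sqrt n}\quad\text{for every }t\ge0.
\]
Meanwhile $\mathrm{LCD}(\Span(v))$ is exponentially large in $n$ for generic $s_j$. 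Your inequality with $d=1$ and $t\to0$ would therefore give an exponentially small bound, a contradiction.

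The right object is Livshyts' randomized LCD \cite{GL21}. In it, the distance to $\mathbb Z^n$ is averaged over independent coordinatewise rescalings drawn from the symmetrized laws of the individual $\xi_j$. It depends on the law of $X$, not only on $E'$.

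\textbf{The structure step inherits the problem.} The level sets must now be defined by the randomized LCD relative to the law of $X$. The event $M^{T}x=0$, however, is governed by the columns of $M$, whose coordinate laws are unrelated to those of $X$. So the level of $x$ does not by itself give a small-ball bound for $\langle M^k,x\rangle$ at scale $1/D$.

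Nor can Lemma~\ref{lem:scalar-linear-smallball} supply the factor $(C/D)^{n-d}$. It is a one-scale estimate, and tensorizing it yields only $\kappa^{n-d}$. That beats a net of size $(CD/\sqrt n)^n$ only for $D=O(\sqrt n)$, far below your target $\sqrt d\,e^{cn/d}$.

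The missing idea is a way to bridge this mismatch between the law of $X$ and the laws of the columns of $M$, in addition to the operator-norm issue you identified. The lattice-net and randomized-LCD machinery of \cite{GL21}, extended to the rectangular setting in \cite{fernandezDistance2025}, is designed for exactly this; your outline does not contain it.
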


\begin{remark}
The codimension restriction $d \le \lambda n/\log n$ in
Theorem~\ref{distance_theorem} is used only in the small-codimension branch of
the proof. In the larger range we use the one-scale anisotropic small-ball
estimate instead.
\end{remark}

The following is \cite[Theorem~1]{Dabagia24}.
\begin{theorem}[Rectangular least singular value]\label{smallest_sing_val_lower}
Let $a > 0$ and $b \in (0,1)$. There exist $C, c > 0$, depending only on
$a$ and $b$, such that the following holds. Let
$M \in \mathbb{R}^{N \times n}$ be a random matrix with independent entries
$\xi$ satisfying
\[
  \mathbb{E}\xi = 0,\qquad
  \mathbb{E}\xi^2 = 1,\qquad
  \sup_{z}\mathbb{P}(|\xi - z| < a) \le b.
\]
Then there is $\varepsilon_0=\varepsilon_0(a,b)>0$ such that, for every
$0<\varepsilon\le\varepsilon_0$,
\[
  \mathbb{P}\!\left(
    s_n(M) \le \varepsilon\!\left(\sqrt{N+1} - \sqrt{n}\right)
  \right)
  \;\le\;
  \bigl(C\varepsilon\log(1/\varepsilon)\bigr)^{N-n+1} + e^{-cN}.
\]
\end{theorem}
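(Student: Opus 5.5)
This theorem is quoted from \cite{Dabagia24}, but if I had to reprove it I would follow the Rudelson--Vershynin scheme for rectangular matrices \cite{RV09}. Because there are no moments beyond the variance, the operator norm of $M$ is not under control. I would therefore replace every use of it by a net argument that only needs Hilbert--Schmidt-type control, in the spirit of \cite{LivTikVer}.

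Set $d:=N-n+1$. I would split $S^{n-1}$ into compressible vectors (within distance $c_0$ of $c_0 n$-sparse vectors) and incompressible vectors, and bound the two infima of $\|Mx\|_2$ separately.

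\textbf{Compressible vectors.} For fixed $x$, the coordinates of $Mx$ are independent. Lemma~\ref{lem:scalar-linear-smallball} gives $\mathcal L((Mx)_i,\rho)\le\kappa$ for each $i$, and tensorization then gives $\PP(\|Mx\|_2\le c\sqrt N)\le e^{-cN}$.

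To take a union bound over a net of compressible vectors I need an approximation step that does not use $\|M\|$. For this I would use random rounding: replace a net point by a random lattice point whose expected image is correct. Then it suffices to control $\|M\|_{HS}$ on sparse supports, and $\|M\|_{HS}^2$ is at most $CNn$ with probability $1-e^{-cN}$ after a truncation argument. This yields $\inf_{\mathrm{Comp}}\|Mx\|_2\ge c\sqrt N$ except on an event of probability $e^{-cN}$.

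\textbf{Incompressible vectors: reduction to distances.} Here I would use the rectangular negative second moment / distance reduction of \cite{RV09}. Choose a block $J$ of about $d$ columns. On the event that incompressible $x$ has small image, the vector $P_{H^\perp}M_Jx_J$ must be small, where $H$ is the span of the remaining $n-|J|$ columns. This reduces the event to a small-ball bound for $P_{H^\perp}M_J$ applied to spread vectors, at the scale
\[
\varepsilon\bigl(\sqrt{N+1}-\sqrt n\bigr)\asymp\varepsilon\,\frac{d}{\sqrt N}.
\]

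\textbf{Incompressible vectors: small-ball estimate.} For $d$ at most $\lambda N/\log N$, I would bring in the distance bound of Theorem~\ref{distance_theorem}, extended to the rectangular setting. For larger $d$, I would condition on $H$ and apply Proposition~\ref{prop:one-scale-anisotropic} with $D=P_{H^\perp}$ restricted appropriately. Since $\sr(P_{H^\perp})=\dim H^\perp\ge d$, this gives the term $e^{-cd}$ at a fixed scale. A multi-scale (LCD-type) refinement then produces the power $(C\varepsilon)^{d}$.

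\textbf{Main obstacle.} The hardest step is the small-ball estimate for the projected vector at all scales $\varepsilon$, with the $\log(1/\varepsilon)$ loss. This requires showing that a random subspace of codimension $d$ has no structured normals, meaning the least common denominator of $H^\perp$ is exponentially large, without any operator norm bound. I would try the Livshyts-style approach: nets on the level sets of the LCD, random rounding to handle the missing operator norm, and the $\log(1/\varepsilon)$ factor coming from summing over dyadic levels. Finally, combining the compressible and incompressible bounds and optimizing the block size $|J|$ against $d$ gives the stated inequality $\bigl(C\varepsilon\log(1/\varepsilon)\bigr)^{d}+e^{-cN}$.
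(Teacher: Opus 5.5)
The paper gives no proof of this statement. It is imported verbatim as \cite[Theorem~1]{Dabagia24}, and the only accompanying text is a remark on how it is used. Your opening sentence is therefore exactly the paper's treatment and is all that is needed here. The reproof you outline, however, would not stand on its own.

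There is first a repairable error. Under two moments, $\|M\|_{HS}^2\le CNn$ does \emph{not} hold with probability $1-e^{-cN}$. For example, i.i.d.\ symmetric entries with $\PP(\xi^2>s)\asymp 1/(s\log^2 s)$ give $\PP(\|M\|_{HS}^2>CNn)\gtrsim_C 1/\log^2(Nn)$ from a single entry. Truncating entries does not help either, since it perturbs $M$ by a matrix whose norm is again uncontrolled. What works for compressible vectors uses the fact that only lower bounds are needed:
pass to the row submatrix $M_R$ of rows with squared norm at most $Ln$, so that $\|Mx\|_2\ge\|M_Rx\|_2$;
control $|R^c|$ and $\|M_R\|_{HS}$ by Chernoff and Bernstein, as in Lemma~\ref{lem:bulk-trimming} with rows in place of columns;
then union-bound over the deterministic sets $R$.

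The genuine gap is the incompressible part. It carries the whole content of the theorem, and your outline only names it.

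\emph{Uniformity in $z$.} Theorem~\ref{distance_theorem} and Proposition~\ref{prop:one-scale-anisotropic} bound the small-ball probability of $P_{H^\perp}M_Jz$ for one \emph{fixed} $z$, but $z=x_J$ depends on $M_J$. Making this uniform by a net over $z$ requires an upper bound on $P_{H^\perp}M_J$ whose failure probability is $O((C\varepsilon)^{d})$. Under two moments, even $\PP(\|P_{H^\perp}M^j\|_2>t\sqrt{\dim H^\perp})$ can decay only polynomially in $t$. Moreover, since $\dim H^\perp=d-1+|J|$, the naive count $(C/\varepsilon)^{|J|}(C\varepsilon)^{d-1+|J|}$ loses a power of $\varepsilon$ against the exponent $d=N-n+1$. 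A different, norm-free mechanism is needed, and the outline does not supply one.

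\emph{Multi-scale estimates.} Proposition~\ref{prop:one-scale-anisotropic} yields only $e^{-cd}$ at the single scale $\eta\sqrt d$, and Theorem~\ref{distance_theorem} stops at $d\le\lambda N/\log N$. For $\lambda N/\log N<d\ll N$, the target requires small-ball bounds at relative scales $\varepsilon$ down to about $e^{-CN/d}$. In that range your plan rests entirely on the unspecified ``multi-scale (LCD-type) refinement.'' That refinement cannot be the classical LCD, whose Littlewood--Offord inequality is proved for identically distributed coefficients. For non-identically distributed entries one needs a randomized LCD adapted to the individual entry laws, which is the core of \cite{LivTikVer,Dabagia24}.

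So the architecture you describe is reasonable, but the substance of the theorem is deferred to the same citation the paper relies on.
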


\begin{remark}
Theorem~\ref{smallest_sing_val_lower} is used in the displayed form above; no
additional \(2+\beta\) moment assumption enters through this input. The \(\log(1/\varepsilon)\) factor imposes only the internal constraint \(\alpha > C\log\alpha\) on one proof parameter, which can be met together with the other choices of constants.
\end{remark}

\subsection{Hilbert--Schmidt nets}

The net used below is chosen for Hilbert--Schmidt approximation rather than operator-norm approximation. This avoids requiring an operator-norm bound for the reduced map under only two moments.

The following is the form of \cite[Theorem~3]{GL21} used below.
\begin{theorem}[Net with Hilbert--Schmidt approximation]
\label{thm:hs-net}
There exists an absolute constant $C_{\mathrm{net}}>0$ such that for every integer $m\ge1$ and every $\varepsilon\in(0,1)$ there exists a deterministic set
\[
  \mathcal N_\varepsilon
  \subset
  \frac32 B_2^m\setminus \frac12 B_2^m
\]
with
\[
  |\mathcal N_\varepsilon|
  \le
  \left(\frac{C_{\mathrm{net}}}{\varepsilon}\right)^m
\]
and the following approximation property: for every matrix $M$ with $m$ columns and every $y\in S^{m-1}$ there exists $y_i\in\mathcal N_\varepsilon$ such that
\begin{equation}\label{eq:lattice-approx}
  \|M(y-y_i)\|_2
  \le
  \frac{\sqrt2\,\varepsilon}{\sqrt m}\|M\|_{HS}.
\end{equation}
\end{theorem}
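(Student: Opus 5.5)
The plan is to take for $\mathcal N_\varepsilon$ the points of a fine cubic lattice that lie in the annulus, and to prove the approximation property \eqref{eq:lattice-approx} by randomized rounding: averaging over random roundings shows that some rounding of $y$ is good for the given $M$. Concretely, set $\beta:=\varepsilon/\sqrt m$ and
\[
  \mathcal N_\varepsilon:=\beta\mathbb Z^m\cap\Bigl(\tfrac32B_2^m\setminus\tfrac12B_2^m\Bigr).
\]
This set is deterministic and does not depend on $M$. That is the essential point: the net is built once, and the choice of $y_i$ is adapted to $M$ only afterwards.

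\emph{Cardinality.} I would use a standard volume comparison. The cubes $z+[0,\beta)^m$, $z\in\mathcal N_\varepsilon$, are disjoint. Since $\operatorname{diam}[0,\beta)^m=\beta\sqrt m=\varepsilon<1$, they all lie in $\tfrac52B_2^m$. Hence
\[
  |\mathcal N_\varepsilon|\beta^m\le \operatorname{vol}\bigl(\tfrac52B_2^m\bigr)=\bigl(\tfrac52\bigr)^m\operatorname{vol}(B_2^m)\le \Bigl(\frac{C}{\sqrt m}\Bigr)^m,
\]
using $\operatorname{vol}(B_2^m)^{1/m}\le C/\sqrt m$. Dividing by $\beta^m=(\varepsilon/\sqrt m)^m$ gives $|\mathcal N_\varepsilon|\le (C_{\mathrm{net}}/\varepsilon)^m$.

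\emph{Approximation.} Fix $M$ and $y\in S^{m-1}$, and write $y_j=\beta(k_j+\theta_j)$ with $k_j\in\mathbb Z$ and $\theta_j\in[0,1)$. Define independent random coordinates
\[
  Y_j:=\beta(k_j+1)\ \text{with probability }\theta_j,\qquad Y_j:=\beta k_j\ \text{otherwise}.
\]
Then $\mathbb E Y=y$, and $Y-y$ has independent centered coordinates with $\Var(Y_j)=\beta^2\theta_j(1-\theta_j)\le\beta^2/4$. Since each coordinate of $Y-y$ has absolute value less than $\beta$, we get $\|Y-y\|_2<\beta\sqrt m=\varepsilon<1/2$ deterministically. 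Hence $Y\in\beta\mathbb Z^m$ and $\tfrac12<\|Y\|_2<\tfrac32$, so every realization of $Y$ lies in $\mathcal N_\varepsilon$. By independence and zero mean, the cross terms vanish and
\[
  \mathbb E\|M(Y-y)\|_2^2=\sum_{j}\Var(Y_j)\,\|Me_j\|_2^2\le\frac{\beta^2}{4}\|M\|_{HS}^2=\frac{\varepsilon^2}{4m}\|M\|_{HS}^2 .
\]
Therefore some realization $y_i$ of $Y$ satisfies $\|M(y-y_i)\|_2\le\frac{\varepsilon}{2\sqrt m}\|M\|_{HS}$, which is stronger than \eqref{eq:lattice-approx}.

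\emph{Main subtlety.} The step that needs the most care is making the annulus membership deterministic rather than merely typical. This is why the coordinatewise bound $|Y_j-y_j|<\beta$ (with $\varepsilon<1$) is used, instead of a concentration estimate. With it, the averaging argument requires no union bound and no control of $\|M\|$, which matches the purpose of the net stated before Theorem~\ref{thm:hs-net}. The constant $C_{\mathrm{net}}$ comes only from the volume estimate and is absolute.
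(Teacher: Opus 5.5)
Your argument is correct in substance. It is the random-rounding mechanism on which \cite[Theorem~3]{GL21} is based. The paper gives no proof of its own and simply imports that result, so your write-up is a self-contained proof of the deterministic form the paper actually uses. It even gives the better constant $\varepsilon/(2\sqrt m)$ in place of $\sqrt2\,\varepsilon/\sqrt m$.

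There is one slip to repair. The statement allows every $\varepsilon\in(0,1)$, but your annulus-membership step uses $\|Y-y\|_2<\varepsilon<1/2$, which is false for $\varepsilon\in[1/2,1)$. In that range some roundings leave the annulus. For example, with $m=1$, $\varepsilon=0.9$, $y=1$ you get $\beta=0.9$ and $\theta=1/9$, so $Y=1.8\notin\tfrac32B_2^1$ with probability $1/9$. The averaging step then no longer certifies a point of $\mathcal N_\varepsilon$. The fix is to take $\beta:=\varepsilon/(2\sqrt m)$. Then $\|Y-y\|_2<\varepsilon/2<1/2$ for every $\varepsilon\in(0,1)$, your volume count gives $|\mathcal N_\varepsilon|\le(5C/\varepsilon)^m$, and the rounding bound becomes $\|M(y-y_i)\|_2\le\frac{\varepsilon}{4\sqrt m}\|M\|_{HS}$. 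Equivalently, deduce the case $\varepsilon\ge1/2$ from the case $\varepsilon/2$ and enlarge $C_{\mathrm{net}}$. The rest of your argument goes through unchanged.
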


\subsection{Biorthogonal identities}

We also need the deterministic identities connecting these estimates to the inverse matrix.

The deterministic algebra used below is from Feng Wei's biorthogonal construction~\cite[Proposition~2.1, Lemma~2.2, and
Corollary~2.3]{Feng_Wei_intermediate}. We state the basic proposition here. Its two block-structured consequences needed later will be invoked after the proof-specific notation has been fixed.

The following is \cite[Proposition~2.1]{Feng_Wei_intermediate}.
\begin{proposition}[Biorthogonal systems]
\label{prop:biorthogonal}
\hfill
\begin{enumerate}
  \item[\textnormal{(1)}] Let $D$ be an invertible $n \times n$ matrix with columns $v_k = De_k$. Define $v_k^* = (D^{-1})^* e_k$. Then
  $(v_k, v_k^*)_{k=1}^n$ is a complete biorthogonal system in
  $\mathbb{R}^n$.
  \item[\textnormal{(2)}] Every linearly independent system $(v_k)_{k=1}^n$
  in an $n$-dimensional Hilbert space $H$ admits a unique dual system
  $(v_k^*)_{k=1}^n$ such that $(v_k, v_k^*)$ is a complete biorthogonal
  system.
  \item[\textnormal{(3)}] In a complete biorthogonal system, $\|v_k^*\|_2 = 1/\dist(v_k, \Span(v_j:j \neq k))$.
\end{enumerate}
\end{proposition}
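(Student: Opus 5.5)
The statement is purely linear-algebraic, so the plan is to verify each part directly from the definitions, working in a fixed orthonormal basis. No probabilistic input from the earlier sections is needed.

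\emph{Part (1).} I will compute the pairings directly:
\[
  \langle v_j, v_k^*\rangle
  =\langle De_j,(D^{-1})^*e_k\rangle
  =\langle D^{-1}De_j,e_k\rangle
  =\langle e_j,e_k\rangle
  =\delta_{jk}.
\]
Completeness follows because $D$ and $(D^{-1})^*$ are invertible. Hence $(v_k)$ and $(v_k^*)$ are each bases of $\mathbb R^n$.

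\emph{Part (2).} Existence comes from reducing to (1). Fix an orthonormal basis of $H$ and identify $H$ with $\mathbb R^n$. Let $D$ be the matrix whose columns are the coordinate vectors of $v_1,\dots,v_n$. Linear independence makes $D$ invertible, and (1) then produces the dual system $v_k^*=(D^{-1})^*e_k$. For uniqueness, suppose $(w_k)$ is another dual system. Then $\langle v_j, v_k^*-w_k\rangle=0$ for every $j$. Since the $v_j$ span $H$, this forces $v_k^*=w_k$.

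\emph{Part (3).} Fix $k$ and set $E=\Span(v_j:j\ne k)$. Since $\dim E=n-1$, its orthogonal complement $E^\perp$ is one-dimensional. Because $v_k\notin E$, we also have $\dist(v_k,E)=\|P_{E^\perp}v_k\|_2>0$. Biorthogonality gives $v_k^*\perp v_j$ for all $j\ne k$, so $v_k^*\in E^\perp$. Decomposing $v_k=P_Ev_k+P_{E^\perp}v_k$ gives
\[
  1=\langle v_k,v_k^*\rangle=\langle P_{E^\perp}v_k,v_k^*\rangle .
\]
Both $P_{E^\perp}v_k$ and $v_k^*$ lie in the line $E^\perp$, so their inner product equals $\pm\|P_{E^\perp}v_k\|_2\|v_k^*\|_2$. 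The sign must be positive because the product equals $1$. Therefore
\[
  \|v_k^*\|_2=\frac{1}{\dist(v_k,\Span(v_j:j\ne k))}.
\]

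I do not expect a genuine obstacle. The only points needing care are the one-dimensionality of $E^\perp$, which is what makes the Cauchy--Schwarz step an equality, and the positivity of the distance, which is guaranteed by linear independence.
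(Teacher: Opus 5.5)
Your argument is correct and is the standard verification. The paper gives no proof of its own; it imports the statement from \cite[Proposition~2.1]{Feng_Wei_intermediate}, and your steps (biorthogonality via $\langle D^{-1}De_j,e_k\rangle=\delta_{jk}$, uniqueness by orthogonality to a spanning set, and $\|v_k^*\|_2\,\|P_{E^\perp}v_k\|_2=1$ on the line $E^\perp$) are the usual proof of these facts. Your only implicit step, $\dim E=n-1$ in part (3), follows from biorthogonality itself: pairing a vanishing combination $\sum_j c_jv_j=0$ with $v_k^*$ gives $c_k=0$.
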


The next lemma records the block form of
\cite[Lemma~2.2 and Corollary~2.3]{Feng_Wei_intermediate}.
\begin{lemma}[Block biorthogonal identities]
\label{lem:block-biorthogonal}
Let $D$ be an invertible $n\times n$ matrix with columns $v_1,\dots,v_n$, and
let $[n]=S\sqcup T$ be a partition with $|S|=l$. Write
\[
  V_S := (v_j)_{j\in S},
  \qquad
  H_T := \Span(v_j:j\in T),
  \qquad
  P_T^\perp := P_{H_T^\perp}.
\]
Let $v_k^*=(D^{-1})^*e_k$ and $y_k^*:=P_{H_T}v_k^*$ for $k\in T$, and let
$B_T$ be the $(n-l)\times n$ matrix whose rows are $(y_k^*)^T$, indexed by
$k\in T$. Then the following hold.
\begin{enumerate}
  \item[\textnormal{(1)}] The matrix $B_T$ is determined entirely by the columns $\{v_j:j\in T\}$.
  \item[\textnormal{(2)}] For every $y\in\mathbb R^l$,
  \[
    \|D^{-1}P_T^\perp V_S y\|_2^2
    =
    \|y\|_2^2+\|B_TV_S y\|_2^2.
  \]
  In particular,
  \[
    \|D^{-1}P_T^\perp V_S y\|_2 \ge \|B_TV_S y\|_2.
  \]
  \item[\textnormal{(3)}] For every subcollection $V_J$ of columns of $V_S$,
  \[
    \|D^{-1}P_T^\perp V_J\|_{HS}^2
    =
    |J|+\|B_TV_J\|_{HS}^2.
  \]
  \item[\textnormal{(4)}] If $V_T:=(v_j)_{j\in T}$, then
  $B_TV_T=I$, $B_T$ vanishes on $H_T^\perp$, and
  \[
    \|B_T\|=s_{\min}(V_T)^{-1}.
  \]
\end{enumerate}
\end{lemma}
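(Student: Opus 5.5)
The approach is to reduce everything to the coordinate description of $D^{-1}$. Since $v_k^*=(D^{-1})^*e_k$, the $k$-th coordinate of $D^{-1}x$ is $\langle v_k^*,x\rangle$. By Proposition~\ref{prop:biorthogonal}(1), $\langle v_j,v_k^*\rangle=\delta_{jk}$. I will write $R_S$ for the map placing a vector $y\in\R^l$ on the coordinates in $S$, and $R_T$ analogously on $T$. Then $D^{-1}V_Sy=R_Sy$ and $D^{-1}V_Tz=R_Tz$.

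\textbf{Step 1: the rows of $B_T$ as a dual system (item (1)).} For $j,k\in T$ we have $v_j\in H_T$, so
\[
\langle v_j,y_k^*\rangle=\langle v_j,P_{H_T}v_k^*\rangle=\langle v_j,v_k^*\rangle=\delta_{jk}.
\]
The $y_k^*$ lie in $H_T$, which has dimension $n-l$ because $D$ is invertible. Hence $(v_j,y_j^*)_{j\in T}$ is a complete biorthogonal system in the Hilbert space $H_T$. By the uniqueness in Proposition~\ref{prop:biorthogonal}(2), the family $(y_k^*)_{k\in T}$ is the dual system of $(v_j)_{j\in T}$ inside $H_T$. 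It is therefore determined by these columns alone, which gives (1).

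\textbf{Step 2: the action of $B_T$.} Each row of $B_T$ lies in $H_T$, so $B_T$ vanishes on $H_T^\perp$. Biorthogonality gives $B_TV_T=I$. Consequently, for $w=V_Tz\in H_T$ we get $B_Tw=z$.

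\textbf{Step 3: the norm identity (items (2) and (3)).} Fix $y\in\R^l$ and write $P_{H_T}V_Sy=V_Tz$, with $z$ unique since the columns of $V_T$ are independent. Then
\[
D^{-1}P_T^\perp V_Sy=D^{-1}(V_Sy-V_Tz)=R_Sy-R_Tz .
\]
Because $S$ and $T$ are disjoint, the squared norm of this vector is $\|y\|_2^2+\|z\|_2^2$. Moreover, by Step 2,
\[
B_TV_Sy=B_TP_{H_T}V_Sy+B_TP_T^\perp V_Sy=z+0=z.
\]
This proves (2). Applying (2) with $y=e_j$ for each column $j\in J$ and summing gives (3).

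\textbf{Step 4: the operator norm (item (4)).} The map $V_T:\R^{n-l}\to H_T$ is a bijection. By Step 2, $B_T$ restricted to $H_T$ is its inverse, and $B_T$ vanishes on $H_T^\perp$. Hence $\|B_T\|=\|V_T^{-1}\colon H_T\to\R^{n-l}\|=s_{\min}(V_T)^{-1}$.

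The only real subtlety is Step 1. There one must check that projecting $v_k^*$ onto $H_T$ preserves the pairings with $v_j$ for $j\in T$, and then invoke uniqueness of the dual system to conclude that $B_T$ depends only on $\{v_j:j\in T\}$. Everything else is linear algebra.
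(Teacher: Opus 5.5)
Your proof is correct and is essentially the argument the paper relies on. The paper defers (1)--(3) to Wei's Lemma~2.2 and Corollary~2.3: those results show that $(v_k,y_k^*)_{k\in T}$ is the dual system of $(v_k)_{k\in T}$ inside $H_T$, and they expand $P_{H_T}V_Sy$ in the basis $(v_k)_{k\in T}$ with coefficient vector $B_TV_Sy$ (your $z$). For (4), the paper makes exactly your Step~4 observation that $B_T$ is the Moore--Penrose left inverse of $V_T$, i.e.\ it equals $V_T^{-1}\colon H_T\to\mathbb R^{n-l}$ on $H_T$ and vanishes on $H_T^\perp$.
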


\medskip\noindent
For \textnormal{(4)}, the identity $B_TV_T=I$ follows from biorthogonality,
and the rows of $B_T$ lie in $H_T$; hence $B_T$ is the Moore--Penrose left
inverse of $V_T$.

\begin{lemma}[Singular-subspace reduction]\label{lem:singular-subspace-reduction}
Let $D$ be an invertible $n\times n$ matrix, and let
$U:\mathbb R^m\to\mathbb R^n$ be a linear map. Assume that
\[
  \inf_{y\in S^{m-1}}\|D^{-1}Uy\|_2\ge a
  \qquad\text{and}\qquad
  \|U\|_{HS}\le H.
\]
Then there is a subspace $E\subseteq \operatorname{range}(U)$ with
$\dim(E)\ge m/2$ such that
\[
  \inf_{u\in S_E}\|D^{-1}u\|_2
  \ge
  \frac{a\sqrt m}{\sqrt2\,H}.
\]
\end{lemma}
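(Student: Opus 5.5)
The plan is a Markov-type averaging argument over an orthonormal basis of the range of $U$, followed by a trace argument on a quadratic form. Let $W=\operatorname{range}(U)$, so that $\dim W\le m$.

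\emph{Step 1: $U$ is injective.} If $Uy=0$ for some $y\in S^{m-1}$, the hypothesis would give $0\ge a$. So we may assume $a>0$; the case $a\le 0$ is trivial, taking $E=W$. Hence $\dim W=m$, and $U$ is a bijection from $\mathbb R^m$ onto $W$.

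\emph{Step 2: Rayleigh-quotient comparison.} For $u\in W$ write $u=Uy$. Then
\[
  \|D^{-1}u\|_2\ge a\|y\|_2\ge \frac{a\,\|u\|_2}{\|U\|}.
\]
This alone only gives the constant $a/\|U\|$, which is too weak. The improvement comes from passing from the operator norm to the Hilbert--Schmidt norm on a large subspace. Consider the singular value decomposition of $U$, with singular values $\sigma_1\ge\dots\ge\sigma_m>0$. Since $\sum_i\sigma_i^2=\|U\|_{HS}^2\le H^2$, at most $m/2$ indices can satisfy $\sigma_i^2>2H^2/m$. Therefore at least $m/2$ indices satisfy $\sigma_i\le \sqrt2\,H/\sqrt m$.

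\emph{Step 3: the subspace.} Let $F\subseteq\mathbb R^m$ be the span of the right singular vectors belonging to these small singular values, and set $E:=U(F)\subseteq W$. Then $\dim E=\dim F\ge m/2$, because $U$ is injective. Moreover $\|Uy\|_2\le (\sqrt2 H/\sqrt m)\|y\|_2$ for every $y\in F$, since $U$ maps the orthonormal right singular vectors in $F$ to orthogonal vectors with those lengths.

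\emph{Step 4: conclusion.} For a unit vector $u\in E$, write $u=Uy$ with $y\in F$. Then $1=\|u\|_2\le (\sqrt2H/\sqrt m)\|y\|_2$, so $\|y\|_2\ge \sqrt m/(\sqrt2H)$. The hypothesis then gives
\[
  \|D^{-1}u\|_2\ge a\|y\|_2\ge \frac{a\sqrt m}{\sqrt2\,H}.
\]

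No step is a genuine obstacle. The only points needing care are the injectivity of $U$, which guarantees $\dim E=\dim F$, and the counting argument in Step 2 (a Markov inequality on the squared singular values), which yields the factor $\sqrt2$ and the dimension bound $m/2$.
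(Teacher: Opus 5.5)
Your proof is correct and is essentially the paper's argument: you get injectivity from the lower bound, use $\sum_j\sigma_j(U)^2\le H^2$ to find at least $m/2$ singular values below $\sqrt2H/\sqrt m$, and take $E=U(F)$ for the span $F$ of the corresponding right singular vectors, with your rescaling $\|D^{-1}Uy\|_2\ge a\|y\|_2$ playing the role of the paper's ratio $\|D^{-1}Uy\|_2/\|Uy\|_2$. One small correction: your aside that the case $a\le0$ is trivial with $E=\operatorname{range}(U)$ is false when $\rank U<m/2$ (e.g.\ $U=0$, where no such $E$ exists), but the lemma is only meaningful, and only used, with $a>0$, which the paper also tacitly assumes when it asserts injectivity.
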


\begin{proof}
The first assumption implies that $U$ is injective. Let
\[
  \sigma_1(U)\ge\cdots\ge\sigma_m(U)>0
\]
be the singular values of $U$. Since $\sum_j\sigma_j(U)^2=\|U\|_{HS}^2$, at
least $\lceil m/2\rceil$ singular values are at most $\sqrt2H/\sqrt m$. Let
$F\subseteq\mathbb R^m$ be the span of the corresponding right singular
vectors and set $E:=U(F)$. Then $\dim(E)=\dim(F)\ge m/2$, and
\[
  \|Uy\|_2\le \frac{\sqrt2H}{\sqrt m}
  \qquad\text{for every }y\in S_F.
\]
Every $u\in S_E$ has the form $u=Uy/\|Uy\|_2$ for some $y\in S_F$, so
\[
  \|D^{-1}u\|_2
  =
  \frac{\|D^{-1}Uy\|_2}{\|Uy\|_2}
  \ge
  \frac{a\sqrt m}{\sqrt2\,H}.
\]
\end{proof}

The lower-edge estimate begins with the oversampled block and the comparison
matrices. The next section establishes the uniform stable-rank, reduced-block,
and inverse bounds needed for the min--max conclusion.

\section{The Lower-Edge Fixed-Scale Estimate}\label{sec:fixed-scale}

We first prove the fixed-scale estimate in the lower-edge range
$l\le c_0n$. This is the part of the proof that requires the inverse matrix,
the biorthogonal construction, and the Hilbert--Schmidt net. The extension
from $c_0n$ to every $l\le(1-\delta)n$ is carried out separately in
Section~\ref{sec:rescaling-consequences}.

\begin{proposition}[Lower-edge fixed-scale estimate]\label{prop:fixed-scale}
There exist constants $c_0, C_1, C_2 > 0$ depending only on $a$ and $b$ such
that for every integer $l$ satisfying
\[
  1 \le l \le c_0 n,
\]
one has
\begin{equation}\label{eq:claim1}
  \mathbb{P}\!\left(s_{n+1-l}(A) \ge C_1\frac{l}{\sqrt{n}}\right)
  \;\le\; \exp(-C_2 l).
\end{equation}
\end{proposition}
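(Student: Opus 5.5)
Throughout, $A$ is the $n\times n$ matrix of the statement and $A^{-1}$ its inverse, which exists almost surely after conditioning away a null event. The plan follows the inverse-matrix route. By min--max, $s_{n+1-l}(A)^{-1}=s_l(A^{-1})$. So it suffices to find, with probability $1-e^{-cl}$, a subspace $E$ with $\dim E\ge l$ and
\[
  \inf_{u\in S_E}\|A^{-1}u\|_2\ge c\sqrt n/l .
\]
Here $S_E$ is the unit sphere of $E$. I would obtain $E$ from Lemma~\ref{lem:singular-subspace-reduction} with $D=A$ and $U=P_T^\perp V_J$. The sets are chosen as follows. Fix a large constant $K=K(a,b)$ and let $S=[Kl]$ be the first $Kl$ columns, with $T=[n]\setminus S$. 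Then $J\subset S$ is a set of $|J|=m:=2l$ columns. The condition $l\le c_0n$ guarantees $Kl\le n/2$, say.

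The two hypotheses of Lemma~\ref{lem:singular-subspace-reduction} then need to be checked with $a\asymp\sqrt{n/(Kl)}$ and $H\asymp \sqrt{Kl\cdot l}$; up to $K$-dependent constants, this gives the desired $\sqrt n/l$.

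\emph{Step 1 (conditioning on $T$).} Condition on the columns in $T$. Then $P_T^\perp$ (projection onto a space of dimension $Kl$) and $B_T$ are fixed by Lemma~\ref{lem:block-biorthogonal}(1), and $V_S$ is still independent. Theorem~\ref{smallest_sing_val_lower} for the $n\times(n-Kl)$ matrix $V_T$ gives, with probability $1-e^{-cKl}$,
\[
  \|B_T\|=s_{\min}(V_T)^{-1}\le C\sqrt n/(Kl).
\]
I also need the lower bound $\|B_T\|_{HS}^2\ge c\,n/(Kl)$. By Proposition~\ref{prop:biorthogonal}(3), $\|B_T\|_{HS}^2=\sum_{k\in T}\dist(v_k,\Span(v_j:j\in T\setminus\{k\}))^{-2}$. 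It therefore suffices that a constant fraction of $k\in T$ have distance $\lesssim\sqrt{Kl}$. This holds for each $k$ with constant probability, since the expected squared distance equals the codimension, and the task is to upgrade it to a bound valid with probability $1-e^{-cKl}$. Together these give $\sr(B_T)\ge cKl$ on a good event.

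\emph{Step 2 (lower bound on the net).} For fixed $y\in S^{m-1}$ and a fixed index set $J$, the vector $V_Jy$ has independent coordinates satisfying Lemma~\ref{lem:scalar-linear-smallball}. Proposition~\ref{prop:one-scale-anisotropic} then yields
\[
  \|B_TV_Jy\|_2\ge \eta\|B_T\|_{HS}\ge c\sqrt{n/(Kl)}
\]
with probability $1-2e^{-cKl}$. I would take a union bound over three things:
\begin{itemize}
  \item the net $\mathcal N_\varepsilon$ of Theorem~\ref{thm:hs-net} in $\R^m$, with constant $\varepsilon$, of size $(C/\varepsilon)^{2l}$;
  \item all $\binom{Kl}{2l}\le e^{Cl\log K}$ choices of $J$;
  \item rescaling, since the net points have norms in $[1/2,3/2]$.
\end{itemize}
All are beaten by $e^{-cKl}$ once $K$ is large. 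Lemma~\ref{lem:block-biorthogonal}(2) then transfers the bound to $\|A^{-1}P_T^\perp V_Jy\|_2$ on the net.

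To pass from the net to all of $S^{m-1}$ via \eqref{eq:lattice-approx}, I need $\|B_TV_J\|_{HS}\le C\sqrt m\,\|B_T\|_{HS}$ and $\|P_T^\perp V_J\|_{HS}\le H$. The cheap way is to choose $J$ \emph{after} seeing $V_S$, which is why the union bound runs over all $J$. Since $\EE\|B_TV_S\|_{HS}^2=Kl\|B_T\|_{HS}^2$ and $\EE\|P_T^\perp V_S\|_{HS}^2=(Kl)^2$, Markov-type bounds show that all but a small fraction of columns $j\in S$ satisfy both $\|B_Tv_j\|\le C\|B_T\|_{HS}$ and $\|P_T^\perp v_j\|\le C\sqrt{Kl}$. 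Here "small fraction" means a constant fraction such that $2l$ good columns remain, and the bound should hold with probability $1-e^{-cKl}$ via a Bernstein/Chernoff count over the independent columns. Take $J$ among these good columns. Then $\|A^{-1}P_T^\perp V_Jy\|_2\gtrsim\sqrt{n/(Kl)}$ for all $y\in S^{m-1}$, and $\|P_T^\perp V_J\|_{HS}\lesssim\sqrt{Kl\cdot l}$, once $\varepsilon$ is a small constant.

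\emph{Step 3 (conclusion).} Lemma~\ref{lem:singular-subspace-reduction} applied with these $a$ and $H$ yields $E$ with $\dim E\ge l$ and
\[
  \inf_{u\in S_E}\|A^{-1}u\|_2\ge c_K\sqrt n/l .
\]
Hence $s_{n+1-l}(A)\le C_1l/\sqrt n$ outside an event of probability $e^{-C_2l}$.

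\emph{Main obstacle.} I expect the hardest step to be the high-probability lower bound on $\|B_T\|_{HS}$, equivalently on $\sr(B_T)$, which must hold with failure probability $e^{-cKl}$ under only two moments. The per-column distances are dependent, so one cannot simply count. The other delicate point is the exponential-probability counting of good columns, given that $\|P_T^\perp v_j\|$ has only a second moment.

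For small codimension ($Kl\lesssim n/\log n$), I would first try to control $\dist(v_k,\cdot)$ via Theorem~\ref{distance_theorem} and a weak-$L^p$ summation over $k$. For larger $Kl$, I would instead apply Proposition~\ref{prop:one-scale-anisotropic} directly to the column-to-subspace distances. In either case, a row-wise averaging argument should convert the expected-value identity into the needed lower bound on enough indices simultaneously.
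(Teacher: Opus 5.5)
Your architecture is correct, and in several places leaner than the paper's. You take the deterministic block $S=[Kl]$ and choose $J$, $|J|=2l$, among columns that are good \emph{relative to} $\|B_T\|_{HS}$. The small-ball exponent $\asymp\sr(B_T)\asymp Kl$ then absorbs the entropy $\lesssim l\log(K/\varepsilon)$ of the net and of the choices of $J$. This does the work of three devices in the paper: the oversampled selection of $S$ among $\lceil(1+\vartheta)l\rceil$ columns (with its union bound over $S$), the passage to $l'=\lfloor\lambda l\rfloor$ columns, and the final monotonicity step. The last is unnecessary because Lemma~\ref{lem:singular-subspace-reduction} with $m=2l$ gives dimension $l$ directly. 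Also, since your net comparison is relative to $\|B_T\|_{HS}$, you never need an \emph{upper} bound on $\|B_T\|_{HS}$. So Lemma~\ref{lem:FW_HS_B} (the distance theorem and weak-$L^p$ summation) is not needed in your scheme.

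\textbf{The gap} is the step you flag yourself. You need $\|B_T\|_{HS}^2\ge c\,n/(Kl)$ with probability $1-e^{-cKl}$, with $c$ \emph{independent of $K$}; otherwise $\sr(B_T)\ge cKl$ cannot beat $\binom{Kl}{2l}$. You give no argument, and the tools you propose point the wrong way. Theorem~\ref{distance_theorem} with weak-$L^p$ summation, and Proposition~\ref{prop:one-scale-anisotropic} applied to column-to-subspace distances, bound distances from \emph{below}, which bounds $\|B_T\|_{HS}$ from \emph{above}. What you need is an \emph{upper} bound on $\dist(v_k,\Span(v_j:j\in T\setminus\{k\}))$ for a constant fraction of $k$ simultaneously, and the obstruction is the dependence among these distances.

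The missing idea is the external-subspace block trick of Lemmas~\ref{lem:block-good} and~\ref{lem:BHS-lower}:
\begin{enumerate}
  \item Split $T$ into $\asymp n/(Kl)$ blocks $\mathcal I$ of size $Kl$. For $k\in\mathcal I$, replace the subspace by the smaller $H^{\mathrm{ext}}_{\mathcal I}=\Span(v_j:j\in T\setminus\mathcal I)$; this can only increase the distance.
  \item Conditionally on $H^{\mathrm{ext}}_{\mathcal I}$ (codimension $2Kl$), the distances $\dist(v_k,H^{\mathrm{ext}}_{\mathcal I})$, $k\in\mathcal I$, are independent with $\EE\,\dist(v_k,H^{\mathrm{ext}}_{\mathcal I})^2=2Kl$.
  \item Markov plus Chernoff show that at least half of them satisfy $\dist(v_k,H^{\mathrm{ext}}_{\mathcal I})^2\le 8Kl$, except with probability $e^{-cKl}$. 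Such a block contributes at least $1/16$ to $\|B_T\|_{HS}^2$.
  \item Markov's inequality for the number of bad blocks (no independence across blocks is needed) shows that at least half the blocks are good with probability $1-2e^{-cKl}$. Hence $\|B_T\|_{HS}^2\ge c\,n/(Kl)$ with an absolute $c$.
\end{enumerate}

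Two smaller points. First, $A$ need not be invertible almost surely: Rademacher entries satisfy \textnormal{(A1)--(A2)}. You need the paper's Gaussian regularization $A_\tau=(A+\tau G)/\sqrt{1+\tau^2}$ followed by a limit. This also supplies the full-rank facts behind $\EE\|P_T^\perp v_j\|_2^2=Kl$ and the codimension count above. Second, the good-column count is not delicate. Conditionally on $\{v_j:j\in T\}$, the $Kl$ good-column indicators are independent, each with probability at least $1/2$ by Markov. You need only $2l$ of them, so Chernoff gives failure probability $e^{-cKl}$.
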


\subsection{Setup for the fixed-scale proof}\label{subsec:proof-setup}

Fix the proof parameter $l$. We prove a lower bound on
$s_{l'}(A^{-1})$, where $l' = \lfloor\lambda l\rfloor$ and
$\lambda\in(0,1)$ is a small constant depending only on $a$ and $b$. This is
equivalent to an upper bound on $s_{n+1-l'}(A)$. We first select a block of
$l$ columns from a slightly larger block. From it we later extract
$l'$ columns and study the ratio
\[
  \frac{\|A^{-1}P_l^\perp A_{l'} y\|_2}{\|P_l^\perp A_{l'} y\|_2},
\]
not on the whole reduced sphere, but on a large singular subspace of
$P_l^\perp A_{l'}$. The notation for the selected sets is introduced at the
point where it is used.

The argument below is used only for proof parameters $l\ge 2/\lambda$. This
ensures $l'\ge1$ and, since we take $\lambda\le1/2$, also $l/2\ge2$ in the
weak-$L^{l/2}$ estimate. The final monotonicity step chooses such a proof
parameter even when the target index is small.

It suffices to prove the result when $A$ is almost surely invertible. The
general case follows by regularization: let $G$ be an independent standard Gaussian
matrix and set
\[
  A_\tau:=\frac{A+\tau G}{\sqrt{1+\tau^2}},
  \qquad 0<\tau\le1.
\]
Then $A_\tau$ has centered variance-one entries and
$\mathcal L((A_\tau)_{ij},a/(2\sqrt2))\le b$; this follows by conditioning on
$G$, since the corresponding interval for $A_{ij}$ has radius at most $a/2$.
The constants below are uniform in $\tau$. If the estimate is proved for
$A_\tau$ with threshold $C l/\sqrt n$, then for every $\eta>0$,
singular-value continuity and Fatou's lemma give the same estimate for $A$
with threshold $(C+\eta)l/\sqrt n$. Increasing $C$ removes the extra
$\eta$. For the rest of this section, $l$ is fixed. We choose an
oversampling constant
$\vartheta=\vartheta(a,b)>0$ in the small-ball estimate below, set
\[
  m_l:=\lceil(1+\vartheta)l\rceil,
\]
and denote by $c_\vartheta>0$ any constant such that
\[
  \binom{m_l}{l}\le e^{c_\vartheta l}
  \qquad\text{for all }l\ge1.
\]
By the standard binomial estimate, we can make $c_\vartheta\to0$ as
$\vartheta\downarrow0$. The constants are fixed in the following order. First
$\alpha$ is chosen in the operator-norm bound for the comparison matrix. Then
$\vartheta$ is chosen small enough to absorb the binomial counting terms. Next
$K$ is fixed in the reduced Hilbert--Schmidt selection, the net radius
$\varepsilon$ is chosen after the resulting Hilbert--Schmidt constant is known,
$\lambda$ is chosen small enough for the reduced-block counting terms, the floor
estimates, and the absorption of fixed exponential prefactors, and finally
$c_0$ is chosen. This order is used below without further comment.

\subsection{Pointwise bounds for the selected block}\label{subsec:pointwise-selected}

For a fixed unit vector $y\in S^{l-1}$ we need two estimates: an
upper bound on $\|P_l^\perp A_l y\|_2$ and a lower bound on
$\|A^{-1}P_l^\perp A_l y\|_2$. These are the pointwise inputs that will later
be combined with Hilbert--Schmidt control of the reduced block.

\subsubsection{Selection notation}

Let $1\le l\le c_0 n$ be fixed, where $c_0>0$ will be chosen small enough at
the end.
Write $A_{m_l}:=(A^1,\dots,A^{m_l})$ for the first $m_l$ columns of $A$. Define
\[
  H_{m_l}:=\Span(A^{m_l+1},\dots,A^n),
  \qquad
  P_{m_l}^\perp:=\text{orthogonal projection onto }H_{m_l}^\perp.
\]
The projection $P_{m_l}^\perp$ is determined entirely by the columns
$\{A^{m_l+1},\dots,A^n\}$ and is therefore independent of $A_{m_l}$. Moreover,
\[
  \mathrm{rank}(P_{m_l}^\perp)=m_l.
\]

\medskip\noindent
Let $S\subseteq[m_l]$ index the $l$ smallest values of
$\|P_{m_l}^\perp A^i\|_2$, breaking ties by increasing index. Define
$A_l:=(A^i)_{i\in S}$ and set $T:=[n]\setminus S$, so $|T|=n-l$.
Because $P_{m_l}^\perp$ is independent of $A_{m_l}$, the selection depends on
the matrix only through the projected norms of the first $m_l$ columns.

\medskip\noindent
Given the selection $S$, define
\[
  H_l:=\Span(A^j:\,j\in T),
  \qquad
  P_l^\perp:=\text{orthogonal projection onto }H_l^\perp.
\]
Since $T\supseteq\{m_l+1,\dots,n\}$, we have $H_{m_l}\subseteq H_l$, and therefore
\begin{equation}\label{eq:proj-inclusion}
  H_l^\perp \;\subseteq\; H_{m_l}^\perp,
  \qquad
  \|P_l^\perp v\|_2 \;\le\; \|P_{m_l}^\perp v\|_2
  \quad \text{for all }v\in\R^n.
\end{equation}

\medskip\noindent
Define $X_k^*:=(A^{-1})^*e_k$ and $Y_k^*:=P_l X_k^*$ for each $k\in T$, and
let $B$ be the $(n-l)\times n$ matrix whose $k$-th row is $(Y_k^*)^T$,
indexed over $k\in T$. Applying Lemma~\ref{lem:block-biorthogonal} to the
partition $(S,T)$, and setting $U(y):=P_l^\perp A_l y$, we obtain for every
$y\in\mathbb R^l$,
\begin{equation}\label{eq:block-biorth-step1}
  \|A^{-1}U(y)\|_2^2
  =
  \|y\|_2^2+\|B(A_ly)\|_2^2
  \ge
  \|B(A_ly)\|_2^2.
\end{equation}
For deterministic $S$, the corresponding matrix $B_S$ is determined solely by
$\{A^j:j\in T\}$ and is independent of $A_S$. Since the present $S$ is selected
from the matrix, the independence is used only through estimates proved
uniformly over deterministic choices of $S$.

\subsubsection{Selecting a short projected block}

Choose $l$ columns among the first $m_l$ whose projection onto
$H_{m_l}^\perp$ is short in Hilbert--Schmidt norm. Since operator norm is
bounded by Hilbert--Schmidt norm, this also controls $\|U(y)\|_2$ for every
unit vector $y$.

\begin{proposition}\label{prop:short-columns}
There exist constants $t_0=t_0(\vartheta)>0$ and $c_1=c_1(\vartheta)>0$ such
that, with
\[
  I := \{i:\|P_{m_l}^\perp A^i\|_2^2 \le t_0l\} \subseteq [m_l],
\]
one has
\begin{equation}\label{eq:short-column}
  \bbP\left(|I| \ge l\right) \ge 1 - e^{-c_1 l}.
\end{equation}
\end{proposition}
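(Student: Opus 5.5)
The plan is to condition on the columns $A^{m_l+1},\dots,A^n$, which freezes the projection $P:=P_{m_l}^\perp$. Then $|I|\ge l$ becomes a statement about $m_l$ independent Bernoulli trials, and a first-moment (Markov) bound per column plus a union bound over which columns fail should suffice. No anti-concentration is needed here, only (A1).

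\emph{Step 1 (per-column Markov bound).} The projection $P$ is determined by $A^{m_l+1},\dots,A^n$, so it is independent of $A^1,\dots,A^{m_l}$, and conditionally on $P$ these columns stay independent. For each $i\le m_l$, since the entries of $A^i$ are independent with mean zero and variance one,
\[
  \mathbb E\bigl(\|PA^i\|_2^2\,\big|\,P\bigr)=\tr(P)=m_l\le(1+\vartheta)l+1\le 2(1+\vartheta)l .
\]
Markov's inequality then gives
\[
  \mathbb P\bigl(\|PA^i\|_2^2>t_0l\,\big|\,P\bigr)\le p:=\frac{2(1+\vartheta)}{t_0}.
\]

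\emph{Step 2 (counting failures).} Call $i$ a failure if $i\notin I$. Conditionally on $P$, failures occur independently, each with probability at most $p$. The event $|I|<l$ means there are at least $m_l-l+1>\vartheta l$ failures, so some set of $k:=\lceil\vartheta l\rceil$ indices in $[m_l]$ consists entirely of failures. A union bound over such sets gives
\[
  \mathbb P\bigl(|I|<l\,\big|\,P\bigr)\le\binom{m_l}{k}p^{k}\le\Bigl(\frac{e\,m_l\,p}{k}\Bigr)^{k}\le\Bigl(\frac{2e(1+\vartheta)p}{\vartheta}\Bigr)^{\vartheta l}.
\]
The last step uses $m_l/k\le 2(1+\vartheta)l/(\vartheta l)$, and is valid provided the base is at most $1$.

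\emph{Step 3 (choice of constants).} Choose $t_0=t_0(\vartheta)$ large enough that $2e(1+\vartheta)p/\vartheta\le e^{-1}$, i.e.
\[
  t_0\ge \frac{4e^2(1+\vartheta)^2}{\vartheta}.
\]
Then the bound is at most $e^{-\vartheta l}$. Integrating over $P$ gives \eqref{eq:short-column} with $c_1=\vartheta$.

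The only point needing care is that $m_l-l+1>\vartheta l$, so that $|I|<l$ really forces at least $\lceil\vartheta l\rceil$ failures; this holds since $m_l\ge(1+\vartheta)l$. There is no substantive obstacle. The one conceptual point is that conditioning on the complementary columns leaves the first $m_l$ columns independent and unaffected, and this independence is what makes the Bernoulli-trial counting valid.
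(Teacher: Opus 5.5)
Your proof is correct and follows the paper's argument: condition on the columns $A^{m_l+1},\dots,A^n$, use $\EE\bigl(\|PA^i\|_2^2\,\big|\,P\bigr)=\tr(P)=m_l\le 2(1+\vartheta)l$ with Markov's inequality for each column, and then bound the number of failing columns by a binomial tail. The only difference is in the last step. You derive the tail bound directly by a union bound over $\lceil\vartheta l\rceil$-subsets, which gives the explicit constant $c_1=\vartheta$. The paper instead chooses $t_0$ so that the binomial mean is at most $\vartheta l/8$ and cites a Chernoff bound.
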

\begin{proof}
Write $\beta:=1+\vartheta$. For each $i\in[m_l]$, conditioning on
$\{A^{m_l+1},\dots,A^n\}$ makes $P_{m_l}^\perp$ deterministic. By isotropy and
the fact that $\mathrm{rank}(P_{m_l}^\perp)=m_l\le 2\beta l$,
\[
  \EE\bigl[\|P_{m_l}^\perp A^i\|_2^2\bigr]
  \;=\;\mathrm{tr}(P_{m_l}^\perp)
  \;=\;m_l
  \;\le\; 2\beta l.
\]
\[
  \bbP \left(\|P_{m_l}^\perp A^i\|_2^2 \ge tl\right) \le \frac{2\beta}{t}
  \qquad\text{for every }t\ge1.
\]
Choose $t_0=t_0(\vartheta)$ so large that
\[
  p_\vartheta:=\frac{2\beta}{t_0}<\frac{\vartheta}{16\beta}.
\]
Then, conditionally on $\{A^j:j>m_l\}$, the number of bad columns among the
first $m_l$ is stochastically dominated by $\mathrm{Bin}(m_l,p_\vartheta)$.
Since
\[
  m_l-l\ge \vartheta l,
  \qquad
  m_l p_\vartheta \le 2\beta l\cdot \frac{\vartheta}{16\beta}
  = \frac{\vartheta l}{8},
\]
a Chernoff bound yields
\[
  \bbP\!\left(|I|<l \,\middle|\, \{A^j:j>m_l\}\right)
  =
  \bbP\!\left(m_l-|I|>m_l-l \,\middle|\, \{A^j:j>m_l\}\right)
  \le e^{-c_1(\vartheta)l}.
\]
Averaging over the outside columns proves \eqref{eq:short-column}.
\end{proof}

\begin{remark}
The point here is combinatorial. A single projected column is controlled only
with constant probability under second moments, but the oversampled block
contains slightly more than $l$ independent columns and we only need $l$ of
them to be good. This gives exponential decay in $l$ without any subgaussian
input.
\end{remark}

\medskip\noindent
On the event from Proposition~\ref{prop:short-columns}, every selected column
of $A_l$ satisfies $\|P_{m_l}^\perp A^i\|_2^2 \le t_0 l$, and therefore
\[
  \|P_{m_l}^\perp A_l\|_{HS}^2 \le t_0 l^2.
\]
Using \eqref{eq:proj-inclusion} and the elementary bound
$\|M\| \le \|M\|_{HS}$ for every matrix $M$, we obtain
\[
  \|P_l^\perp A_l\|
  \;\le\;
  \|P_l^\perp A_l\|_{HS}
  \;\le\;
  \|P_{m_l}^\perp A_l\|_{HS}
  \;\le\;
  \sqrt{t_0}\,l.
\]
It follows that there exist constants $C,c_1>0$, depending only on $a$ and $b$,
such that
\begin{equation}\label{eq:step11-conclusion}
  \bbP\!\left(
    \|P_l^\perp A_l\| > Cl
    \ \text{ or }\
    \|P_l^\perp A_l\|_{HS} > Cl
  \right)
  \;\le\;
  e^{-c_1 l}.
\end{equation}

Define the event
\begin{equation}\label{eq:E1}
  \cE_1 := \bigl\{A:
    \|P_l^\perp A_l\| > Cl
    \text{ or }
    \|P_l^\perp A_l\|_{HS} > Cl
  \bigr\}.
\end{equation}
By \eqref{eq:step11-conclusion}, $\bbP(\cE_1)\le e^{-c_1 l}$. In
particular, on $\cE_1^c$ one has
\[
  \|P_l^\perp A_l\|_{HS} \le Cl
  \qquad\text{and}\qquad
  \|U(y)\|_2 = \|P_l^\perp A_l y\|_2 \le Cl
  \qquad\text{for every } y \in S^{l-1}.
\]

\subsubsection{Lower bound on the inverse image}

We need uniform control of $\|B\|_{HS}$ and $\|B\|$ over all possible choices
of $A_l\subseteq A_{m_l}$. The Hilbert--Schmidt and operator-norm estimates
are first proved for a fixed deterministic set
$S\subseteq[m_l]$, $|S|=l$, and then union-bounded over the
$\binom{m_l}{l}\le e^{c_\vartheta l}$ possibilities. The final part gives the
lower bound on $\|B A_l y\|_2$.

\medskip\noindent
\textit{Lower Hilbert--Schmidt bound for \(B\).}

We use the standard identity \cite[Step~1.2.1]{Feng_Wei_intermediate}
\begin{equation}\label{eq:BHS-identity}
  \|B\|_{HS}^2
  \;=\;\sum_{k\in T}\|Y_k^*\|_2^2
  \;=\;\sum_{k\in T}\dist(A^k,H_{l,k})^{-2},
\end{equation}
where $H_{l,k}:=\Span(A^j:\,j\in T,\,j\ne k)$.

\medskip\noindent
For any index set $S\subseteq[m_l]$ and any $k\ge m_l+1$, we have
$k\in T=[n]\setminus S$ and
\[
  H_{l,k}
  =\Span(A^j:\,j\in T,\,j\ne k)
  \;\supseteq\;
  \Span(A^j:\,j>m_l,\,j\ne k)
  =:H_{m_l,k},
\]
since $T\supseteq\{m_l+1,\dots,n\}$. Distance to a larger subspace is smaller, so
\[
  \dist(A^k,H_{l,k})^{-2}
  \;\ge\;
  \dist(A^k,H_{m_l,k})^{-2}
  \qquad \text{for all }k\ge m_l+1.
\]
Summing over $k\ge m_l+1$ and using \eqref{eq:BHS-identity},
\begin{equation}\label{eq:BHS-lower-indep}
  \|B\|_{HS}^2
  \;\ge\;
  \sum_{k=m_l+1}^n \dist(A^k,H_{m_l,k})^{-2}.
\end{equation}
The right-hand side depends only on $\{A^k:k>m_l\}$ and is
therefore independent of the choice of $S\subseteq[m_l]$. It therefore
suffices to prove a lower bound for this quantity alone.

\medskip\noindent
Shrinking the final range constant $c_0$ if necessary, we may assume
$(2+\vartheta)c_0<1/2$. Since $l\le c_0n$, this guarantees that the outside
columns contain linearly many blocks and, in particular,
\[
  m:=\left\lfloor\frac{n-m_l}{l}\right\rfloor\ge c_\vartheta'\frac nl
\]
for a constant $c_\vartheta'>0$. Partition
$\{m_l+1,\dots,m_l+ml\}$ into $m$ disjoint blocks of length $l$:
\[
  \mathcal{I}_j
  :=\{m_l+(j-1)l+1,\dots,m_l+jl\},
  \qquad j=1,\dots,m.
\]

\medskip\noindent
For $k\in\mathcal{I}_j$, the subspace $H_{m_l,k}$ contains columns from other
blocks $\mathcal{I}_{j'}, j'\ne j$, which creates a conditional dependence
among the distances $\{\dist(A^k,H_{m_l,k}):k\in\mathcal{I}_j\}$. To
restore independence, we replace $H_{m_l,k}$ by the external subspace for
block $j$:
\[
  H_j^{\mathrm{ext}}
  :=\Span(A^r:\,r>m_l,\,r\notin\mathcal{I}_j),
  \qquad
  \widetilde{Z}_k:=\dist(A^k,H_j^{\mathrm{ext}}),
  \quad k\in\mathcal{I}_j.
\]
Since $H_j^{\mathrm{ext}}\subseteq H_{m_l,k}$, we have
$\widetilde{Z}_k\ge\dist(A^k,H_{m_l,k})$ and thus
\begin{equation}\label{eq:Ztilde-lower}
  \sum_{k\in\mathcal{I}_j}\dist(A^k,H_{m_l,k})^{-2}
  \;\ge\;
  \sum_{k\in\mathcal{I}_j}\widetilde{Z}_k^{-2}.
\end{equation}
Conditional on $H_j^{\mathrm{ext}}$ (i.e., on all columns
$\{A^r:r>m_l,r\notin\mathcal{I}_j\}$), the subspace $H_j^{\mathrm{ext}}$ is
deterministic and each $\widetilde{Z}_k$ depends only on $A^k$; since the
columns $\{A^k:k\in\mathcal{I}_j\}$ are mutually independent, so are
$\{\widetilde{Z}_k:k\in\mathcal{I}_j\}$ conditional on $H_j^{\mathrm{ext}}$.

\begin{lemma}[Each block is good with exponentially high probability]
\label{lem:block-good}
Let
\[
  \mathcal{E}_j
  := \left\{
    \#\{k\in\mathcal{I}_j:\widetilde{Z}_k\le C_2\sqrt{l}\}\ge l/2
  \right\}
\]
for a constant $C_2=C_2(\vartheta)>0$ to be chosen. Then
\[
  \bbP(\mathcal{E}_j^c)\;\le\;e^{-c_3 l}
\]
for a constant $c_3=c_3(a,b,\vartheta)>0$.
\end{lemma}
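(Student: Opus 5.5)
Condition on all columns $\{A^r : r>m_l,\ r\notin\mathcal I_j\}$, so that $H_j^{\mathrm{ext}}$ becomes a deterministic subspace. Its dimension is at most $n-m_l-l$, hence its orthogonal complement $W:=(H_j^{\mathrm{ext}})^\perp$ has dimension $d:=n-\dim H_j^{\mathrm{ext}}\ge m_l+l$. Generically $d=m_l+l$, so $d\le (3+\vartheta)l\le 4l$, say, once $\vartheta\le1$. The one thing that must be checked is that $d$ is not much larger than $l$. This holds because $\dim H_j^{\mathrm{ext}}$ is at least $n-m_l-l$ almost surely under the regularization assumption that $A$ is invertible: the outside columns are then linearly independent, so $\dim H_j^{\mathrm{ext}}= n-m_l-l$ exactly and $d=m_l+l$.

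With $P_W$ fixed, each $\widetilde Z_k=\|P_WA^k\|_2$ for $k\in\mathcal I_j$ is a function of $A^k$ alone. By isotropy, $\EE\widetilde Z_k^2=\operatorname{tr}P_W=d\le 4l$. Markov's inequality then gives
\[
\PP\bigl(\widetilde Z_k> C_2\sqrt l\bigm| H_j^{\mathrm{ext}}\bigr)\le \frac{4}{C_2^2}=:p .
\]
I choose $C_2$ large enough that $p\le 1/8$.

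Conditionally, the indicators $\one\{\widetilde Z_k>C_2\sqrt l\}$ for $k\in\mathcal I_j$ are independent, since the columns are independent. The number of bad indices is therefore stochastically dominated by $\mathrm{Bin}(l,p)$. The event $\mathcal E_j^c$ requires more than $l/2$ bad indices, which is at least $4$ times the mean $pl\le l/8$. A Chernoff bound gives
\[
\PP\bigl(\mathrm{Bin}(l,p)>l/2\bigr)\le \binom{l}{\lceil l/2\rceil}p^{l/2}\le (4p)^{l/2}\le 2^{-l/2},
\]
so $\PP(\mathcal E_j^c\mid H_j^{\mathrm{ext}})\le e^{-c_3l}$. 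Averaging over the conditioning yields the lemma.

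No anti-concentration is needed here, only second moments and independence. The constants depend only on $\vartheta$ through $d\le(3+\vartheta)l$. The only delicate point is the dimension count for $W$, which, as noted, uses almost sure invertibility (or simply the upper bound $d\le n-(n-m_l-l)$ when the span is full). If one prefers not to rely on full rank, the bound $\operatorname{tr}P_W=d$ can be replaced by an upper bound: $\dim H_j^{\mathrm{ext}}$ can only be smaller when the columns are dependent, and that occurs with probability zero in the regularized model.
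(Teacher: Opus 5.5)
Your proposal is correct and follows the paper's proof essentially step for step. You condition on the outside columns, use the almost-sure invertibility reduction to get codimension exactly $m_l+l$, apply isotropy and Markov to each $\widetilde Z_k$, and finish with a binomial tail bound; your explicit estimate $\binom{l}{k}p^k\le(4p)^{l/2}$ with $p\le 1/8$ simply replaces the paper's generic Chernoff bound with $C_2=2\sqrt{3+2\vartheta}$.

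One small correction: your closing remark about avoiding full rank is backwards. Linear dependence makes $\dim H_j^{\mathrm{ext}}$ smaller, hence $d$ larger, so it cannot give an upper bound on $\operatorname{tr}P_W$. The remark is not needed, though, since both your argument and the paper's rest on the invertibility reduction.
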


\begin{proof}
 Fix $j$. We are working in the almost surely invertible case fixed at the
 start of this section, so every subcollection of columns is linearly
 independent. Thus $H_j^{\mathrm{ext}}$ has codimension exactly
 $m_l+l\le (3+2\vartheta)l$ almost surely.

 Conditionally on the sigma-field generated by the columns
 $\{A^r:r>m_l,\ r\notin \mathcal I_j\}$, the subspace $H_j^{\mathrm{ext}}$ is
 deterministic and the random vectors $\{A^k:k\in\mathcal I_j\}$ are
 independent. Therefore, for every $k\in\mathcal I_j$,
 \[
   \EE\!\left[\widetilde{Z}_k^2 \,\middle|\, H_j^{\mathrm{ext}}\right]
   = \EE\!\left[\|P_{(H_j^{\mathrm{ext}})^\perp}A^k\|_2^2
     \,\middle|\, H_j^{\mathrm{ext}}\right]
   = \tr\!\bigl(P_{(H_j^{\mathrm{ext}})^\perp}\bigr)
   = m_l+l
   \le (3+2\vartheta)l.
 \]
	 By Markov's inequality,
 \[
   \bbP\!\left(
     \widetilde{Z}_k^2 > 4(3+2\vartheta)l
     \,\middle|\,
     H_j^{\mathrm{ext}}
   \right)
   \le \frac14
   \qquad\text{almost surely}.
 \]
	 Choose $C_2 := 2\sqrt{3+2\vartheta}$. Then each indicator
 $\one_{\{\widetilde Z_k \le C_2\sqrt l\}}$ has conditional mean at least
 $3/4$. Since these indicators are conditionally independent given
 $H_j^{\mathrm{ext}}$, a Chernoff bound gives
 \[
   \bbP\!\left(\mathcal E_j^c \,\middle|\, H_j^{\mathrm{ext}}\right)
   \le e^{-c_3 l}.
 \]
 Therefore
 \[
   \bbP(\mathcal E_j^c)
   \le e^{-c_3 l},
 \]
 which is the required estimate after renaming constants.
\end{proof}

\begin{lemma}[Lower bound for $\|B\|_{HS}$, uniform over choices of $A_l$]
\label{lem:BHS-lower}
There exist constants $c,C>0$, depending only on $a$ and $b$, such that
\[
  \bbP\!\left(\sum_{k=m_l+1}^n\dist(A^k,H_{m_l,k})^{-2}
    < c\,\frac{n}{l}\right)
  \;\le\;e^{-Cl}.
\]
In particular,
\[
  \bbP\!\left(\|B\|_{HS} < \sqrt{\frac{cn}{l}}\right)
  \;\le\;\exp(-C l),
\]
and this bound holds for every choice of $S\subseteq[m_l]$ with $|S|=l$.
\end{lemma}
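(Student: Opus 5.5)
We derive the bound from Lemma~\ref{lem:block-good} and the reductions \eqref{eq:BHS-lower-indep} and \eqref{eq:Ztilde-lower}. The plan is to show that, outside an event of probability $e^{-Cl}$, a constant fraction of the $m\ge c_\vartheta' n/l$ blocks are good. Each good block then contributes at least a constant to the sum of inverse squared distances.

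\emph{Contribution of a good block.} On $\mathcal E_j$, at least $l/2$ indices $k\in\mathcal I_j$ satisfy $\widetilde Z_k\le C_2\sqrt l$. Hence
\[
  \sum_{k\in\mathcal I_j}\widetilde Z_k^{-2}\ \ge\ \frac{l}{2}\cdot\frac{1}{C_2^2 l}=\frac{1}{2C_2^2}.
\]
By \eqref{eq:Ztilde-lower}, the same lower bound holds for $\sum_{k\in\mathcal I_j}\dist(A^k,H_{m_l,k})^{-2}$. All terms of the full sum are nonnegative, so if at least $m/2$ blocks are good, the full sum is at least $m/(4C_2^2)\ge c\,n/l$.

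\emph{Counting good blocks.} The events $\mathcal E_j$ are not independent: each one is defined through $H_j^{\mathrm{ext}}$, which involves the columns of the other blocks. We avoid this with Markov's inequality and the bound $\PP(\mathcal E_j^c)\le e^{-c_3 l}$ from Lemma~\ref{lem:block-good}. The expected number of bad blocks is at most $m e^{-c_3 l}$. Therefore
\[
  \PP\bigl(\#\{j:\mathcal E_j^c\}\ge m/2\bigr)\le 2e^{-c_3 l}.
\]
This gives failure probability at most $2e^{-c_3 l}\le e^{-Cl}$. The absorption of the factor $2$ uses $l\ge 2/\lambda$, which holds throughout; alternatively one can simply shrink $C$. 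No independence between blocks is needed.

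\emph{Conclusion for $\|B\|_{HS}$.} The estimate for $\|B\|_{HS}$ follows from \eqref{eq:BHS-lower-indep}. The right-hand side there depends only on $\{A^k:k>m_l\}$, so the same exceptional event works simultaneously for every $S\subseteq[m_l]$, and no union bound is needed. The only delicate point is the dependence among the blocks, and the averaging argument handles it. The constants $c$ and $C$ depend on $\vartheta$ through $C_2$, $c_3$ and $c_\vartheta'$, and $\vartheta$ is itself fixed in terms of $a$ and $b$. So they depend only on $a$ and $b$.
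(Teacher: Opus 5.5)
Your proposal is correct and matches the paper's proof essentially step for step. It uses the same three ingredients: the per-block contribution $1/(2C_2^2)$ on $\mathcal E_j$; Markov's inequality on the number of bad blocks, so that no independence between blocks is needed; and uniformity in $S$, because \eqref{eq:BHS-lower-indep} involves only $\{A^k:k>m_l\}$.

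One minor caveat concerns absorbing the factor $2$ in $2e^{-c_3 l}$. Shrinking $C$ alone cannot do this uniformly down to $l=1$, since the Chernoff exponent $c_3$ may be smaller than $\log 2$. Your first justification is the one that works: use $l\ge 2/\lambda$ with $\lambda$ chosen small relative to $c_3$.
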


\begin{proof}
  Combine \eqref{eq:BHS-lower-indep} and \eqref{eq:Ztilde-lower} to get
\[
  \|B\|_{HS}^2
  \;\ge\;
  \sum_{j=1}^m\sum_{k\in\mathcal{I}_j}\widetilde{Z}_k^{-2}.
\]
 On $\mathcal{E}_j$, at least $l/2$ indices in $\mathcal{I}_j$ satisfy
 $\widetilde{Z}_k\le C_2\sqrt{l}$, and for those indices
 $\widetilde{Z}_k^{-2}\ge(C_2^2 l)^{-1}$. Hence
 \[
   \sum_{k\in\mathcal{I}_j}\widetilde{Z}_k^{-2}
   \;\ge\;
   \frac{l}{2}\cdot \frac{1}{C_2^2 l}
   = \frac{1}{2C_2^2}.
 \]
 Therefore
 \[
   \|B\|_{HS}^2
   \;\ge\;
   \frac{1}{2C_2^2}\sum_{j=1}^m\mathbf{1}_{\mathcal{E}_j}.
 \]
 Let
 \[
   Z:=\sum_{j=1}^m \mathbf 1_{\mathcal E_j^c}.
 \]
 The block estimate gives $\EE Z\le me^{-c_4l}$, without requiring
 independence between the block events. Hence Markov's inequality yields
 \[
   \bbP\!\left(Z\ge \frac m2\right)
   \le \frac{2\EE Z}{m}
   \le 2e^{-c_4l}.
 \]
 Thus, with probability at least $1-2e^{-c_4l}$, at least $m/2$ blocks are
 good. On this event,
\[
  \|B\|_{HS}^2
  \;\ge\;
  \frac{m}{4C_2^2}
  \;\ge\;
   \frac{c_\vartheta' n}{4C_2^2 l}
  \;=:\;
  c\,\frac{n}{l},
\]
 giving the first inequality, since $m\ge c_\vartheta' n/l$.

 Since the entire lower bound uses only the columns $\{A^k:k>m_l\}$, it is
 independent of the choice of $S\subseteq[m_l]$ and therefore holds uniformly
 over all choices of $A_l$.
\end{proof}

\begin{lemma}[Hilbert--Schmidt upper bound for $B$]\label{lem:FW_HS_B}
There exist constants $C,c,L>0$, depending only on $a$ and $b$, such that for
every $4\le l\le c_0n$,
\begin{equation}\label{eq:B_HS_upper-linear}
  \bbP\!\left(
    \exists\,S\subseteq[m_l],\ |S|=l:
    \|B_S\|_{HS}>C\sqrt{\frac nl}
  \right)
  \le e^{-cl}.
\end{equation}
Moreover, in the same range, if $l\le L\log(2n)$, then for every $t\ge1$ and
every fixed $S\subseteq[m_l]$, $|S|=l$, one has
\begin{equation}\label{eq:B_HS_upper}
  \bbP\!\left(\|B\|_{HS} > t\sqrt{\frac{n}{l}}\right)
  \;\le\;
  \left(\frac{C}{t}\right)^l + e^{-cl}.
\end{equation}
\end{lemma}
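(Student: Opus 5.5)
We plan to start from the identity \eqref{eq:BHS-identity}, $\|B_S\|_{HS}^2=\sum_{k\in T}\dist(A^k,H_{l,k})^{-2}$, with $T=[n]\setminus S$. Because we work in the almost surely invertible case, each $H_{l,k}$ is spanned by $n-l-1$ independent columns and has codimension exactly $l+1$. For a fixed $S$, the column $A^k$ is independent of $H_{l,k}$. Conditioning on $H_{l,k}$, the distance therefore equals $\|P_k A^k\|_2$, where $P_k$ is a deterministic projection of rank $l+1$, so $\|P_k\|_{HS}^2=l+1$ and $\sr(P_k)=l+1$. The target $\|B_S\|_{HS}^2\lesssim n/l$ says that these $n-l$ distances are at least of order $\sqrt l$ on average in the harmonic sense. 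We plan to prove everything for a fixed $S$ and then pay $\binom{m_l}{l}\le e^{c_\vartheta l}$ for the union over $S$. This is affordable because $\vartheta$ is chosen after the constants produced below, so $c_\vartheta$ can be made smaller than any of them.

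\emph{Large codimension, $l> L\log(2n)$.} We apply Proposition~\ref{prop:one-scale-anisotropic} conditionally, with $Z=A^k$ (its coordinates satisfy the hypotheses by Lemma~\ref{lem:scalar-linear-smallball}) and $D=P_k$. This gives
\[
\PP\bigl(\dist(A^k,H_{l,k})\le \eta\sqrt{l+1}\bigr)\le 2e^{-c(l+1)} .
\]
A union bound over $k\in T$ and over $S$ bounds the probability that some distance falls below $\eta\sqrt l$ by $2n\,e^{c_\vartheta l}e^{-cl}$. This is at most $e^{-cl/2}$ once $c_\vartheta\le c/4$ and $L$ is large enough that $\log(2n)\le l/L$ is absorbed. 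Off this event, every $S$ satisfies $\|B_S\|_{HS}^2\le n/(\eta^2 l)$, which gives \eqref{eq:B_HS_upper-linear} in this range.

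\emph{Small codimension, $4\le l\le L\log(2n)$.} Here $d=l+1\le\lambda n/\log n$ after shrinking $c_0$ if needed, so Theorem~\ref{distance_theorem} applies to $X=A^k$ and $M=(A^j)_{j\in T\setminus\{k\}}$. Set $W_k:=l\,\dist(A^k,H_{l,k})^{-2}$. The theorem gives
\[
\PP(W_k>s)\le (C/\sqrt s)^{l+1}+e^{-cn}.
\]
We split off the event $\bigcup_k\{W_k>e^{cn/(l+1)}\}$, whose probability is at most $n e^{-cn}$ plus a negligible term, and this is at most $e^{-cl}$. On the remaining range, each $W_k$ has weak-$L^{p}$ quasinorm bounded by an absolute constant $C'$, with $p=l/2\ge2$. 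Weak $L^p$ is normable for $p>1$, with norm-equivalence constant $p/(p-1)\le2$. Hence, without any independence among the $W_k$,
\[
\Bigl\|\sum_{k\in T}W_k\Bigr\|_{p,\infty}\le 2C'n .
\]
Chebyshev in weak $L^p$ then gives $\PP\bigl(\sum_k W_k>t^2 n\bigr)\le (C/t^2)^{l/2}=(C/t)^l$. This is exactly \eqref{eq:B_HS_upper} for fixed $S$, since $\sum_k W_k=l\,\|B\|_{HS}^2$. Taking $t=C_*$ large so that $(C/C_*)^l e^{c_\vartheta l}\le e^{-l}$, the union over $S$ also yields \eqref{eq:B_HS_upper-linear} in this range.

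The main obstacle is the small-codimension branch. There the one-scale estimate only controls distances down to a fixed multiple of $\sqrt l$ with probability $1-e^{-cl}$, and this is too weak to union over $n$ columns when $l\ll\log n$. The sum must instead be handled through the multiscale tail from Theorem~\ref{distance_theorem} and weak-$L^p$ summation. The delicate points are the following:
\begin{itemize}
\item checking that the truncation of the additive $e^{-cn}$ term does not destroy the weak-$L^p$ bound;
\item confirming that the summation constant stays bounded uniformly in $p=l/2\ge2$;
\item confirming that the resulting exponent $l/2$ in $t^2$ indeed reproduces $(C/t)^l$.
\end{itemize}
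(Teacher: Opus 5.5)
Your proposal is correct and follows the paper's proof: the same split of the range at $l\approx L\log(2n)$, the one-scale small-ball estimate (Proposition~\ref{prop:one-scale-anisotropic} with $D$ the rank-$(l+1)$ projection) plus a union bound over $k$ and $S$ above that threshold, and truncation, weak-$L^{l/2}$ summation and Chebyshev applied to the tails from Theorem~\ref{distance_theorem} below it, followed by the union over $S$ paid for by $c_\vartheta$. The differences are cosmetic: you truncate at the much higher level $e^{cn/(l+1)}$, which turns the truncation error into a trivial $e^{-c'n}$ bound in place of the paper's estimate of $n(CC_0l/n)^l$ at level $u_0=C_0l/n$, and you justify the uniform weak-$L^p$ triangle constant explicitly via normability with constant $p/(p-1)\le 2$ rather than citing Stein--Weiss.
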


\begin{proof}
We consider the two ranges separately. Assume first $l\le L\log(2n)$, where
$L>0$ is fixed. Fix an index set $S \subseteq [m_l]$ with $|S| = l$ and set
$T := [n] \setminus S$. By Proposition~\ref{prop:biorthogonal} and
Lemma~\ref{lem:block-biorthogonal},
\begin{equation}\label{eq:BHS-id-upper}
  \|B\|_{HS}^2
  \;=\; \sum_{k \in T} \dist(A^k, H_{l,k})^{-2},
\end{equation}
where $H_{l,k} := \Span(A^j : j \in T,\, j \ne k)$. For each
$k \in T$, the columns generating $H_{l,k}$ are indexed by
$T \setminus \{k\}$, which is disjoint from $\{k\}$, so $A^k$ is independent
of $H_{l,k}$.

\medskip
\noindent\textit{Truncation and weak $L^{l/2}$ estimates.}
Recall that the weak $L^p$ quasi-norm of a real random variable $Z$ is
defined by
\[
  \|Z\|_{p,\infty}
  :=
  \sup_{u > 0} u\,(\bbP(|Z| > u))^{1/p}.
\]
For $p \ge 2$ this satisfies the triangle inequality up to an absolute
constant,
\[
  \left\|\sum_i Z_i\right\|_{p,\infty}
  \le C \sum_i \|Z_i\|_{p,\infty},
\]
see, e.g., \cite[Theorem~3.21]{SteinWeiss}.

Fix a truncation level $u_0 := C_0 l / n$ with $C_0 > 0$ a small constant,
depending only on $a$ and $b$, to be chosen, and define
\[
  W_k
  \;:=\;
  \min\!\left(
    \dist(A^k, H_{l,k})^{-2},\;
    (u_0\sqrt{l})^{-2}
  \right),
  \qquad k \in T.
\]
The subspace $H_{l,k}$ is generated by $n-l-1$ columns, so the relevant
codimension is $l+1$. Since $l\le L\log(2n)$, the condition
$l+1\le \lambda_F n/\log n$ in Theorem~\ref{distance_theorem} holds for all
large $n$; the finitely many remaining values of $n$ are absorbed into the
constants. Thus, for every $k \in T$ and every $\tau > 0$,
\begin{equation}\label{eq:dist-upper}
  \bbP\!\left(\dist(A^k, H_{l,k}) < \tau\sqrt{l}\right)
  \;\le\; (C\tau)^l + e^{-cn}.
\end{equation}
We bound the weak $L^{l/2}$ quasi-norm of each $W_k$. Setting
$p = l/2$ and $s = (\tau\sqrt{l})^{-2} = 1/(\tau^2 l)$ so that
$\{W_k > s\} \subseteq \{\dist(A^k, H_{l,k}) < \tau\sqrt{l}\}$ for
$\tau \ge u_0$, we compute:
\begin{align}
  \|W_k\|_{l/2,\infty}
  &= \sup_{\tau \ge u_0}
    \frac{1}{\tau^2 l}
    \bigl(\bbP(W_k > (\tau\sqrt{l})^{-2})\bigr)^{2/l} \notag \\
  &\le \sup_{\tau \ge u_0}
    \frac{1}{\tau^2 l}
    \left[
      (C\tau)^2 + e^{-2cn/l}
    \right] \notag \\
  &\le \sup_{\tau \ge u_0}
    \left[
      \frac{C^2}{l} + \frac{e^{-2cn/l}}{\tau^2 l}
    \right]. \label{eq:Wk-weak}
\end{align}
Since $\tau \ge u_0 = C_0 l/n$, we have $\tau^{-2} \le n^2/(C_0^2 l^2)$, and therefore
\[
  \frac{e^{-2cn/l}}{\tau^2 l}
  \;\le\;
  \frac{n^2}{C_0^2 l^3} e^{-2cn/l}.
\]
Since $l\le L\log(2n)$, the right-hand side is bounded by $C/l$ uniformly in
$n$ and $l$ after increasing $C$ if necessary. Therefore
\eqref{eq:Wk-weak} yields
\begin{equation}\label{eq:Wk-weak-final}
  \|W_k\|_{l/2,\infty} \;\le\; \frac{C}{l}
  \qquad \text{for all } k \in T.
\end{equation}
Since $|T| = n - l \le n$, the weak triangle inequality gives
\[
  \left\|\sum_{k \in T} W_k\right\|_{l/2,\infty}
  \;\le\; C \sum_{k \in T} \|W_k\|_{l/2,\infty}
  \;\le\; \frac{Cn}{l}.
\]
By the definition of the weak norm, this implies that for every $t \ge 1$,
\begin{equation}\label{eq:sumW-tail}
  \bbP\!\left(\sum_{k \in T} W_k > t^2 \frac{n}{l}\right)
  \;\le\; \left(\frac{C}{t}\right)^l.
\end{equation}

\medskip
\noindent\textit{Truncation error.}
The truncated and untruncated quantities differ only when
\[
  \dist(A^k,H_{l,k})<u_0\sqrt l.
\]
By \eqref{eq:dist-upper} with $\tau=u_0=C_0l/n$ and a union bound over
$k\in T$,
\[
  \bbP\!\left(
    \exists\, k \in T :\;
    W_k \ne \dist(A^k, H_{l,k})^{-2}
  \right)
  \le
  (n-l)\left[\left(CC_0\frac{l}{n}\right)^l + e^{-cn}\right].
\]
We claim that the right-hand side is bounded by $e^{-c'l}$ for suitable
constants $C_0,c'>0$. The second term is immediate, since
\[
  (n-l)e^{-cn}\le ne^{-cn}\le e^{-c'l}
\]
after decreasing $c'>0$ if necessary. For the first term, set
\[
  F(l):=n\left(CC_0\frac{l}{n}\right)^l.
\]
Then
\[
  \log F(l)=\log n + l\log\!\left(CC_0\frac{l}{n}\right),
\]
so
\[
  \frac{d}{dl}\log F(l)
  =
  \log\!\left(CC_0\frac{l}{n}\right)+1.
\]
Since $l\le L\log(2n)$, choosing $C_0>0$ sufficiently small ensures, for all
large $n$,
\[
  CC_0\frac{l}{n}\le CC_0\frac{L\log(2n)}{n}\le e^{-3},
\]
and hence
\[
  \frac{d}{dl}\log F(l)\le -2
\]
throughout the admissible range of $l$. Integrating from $1$ to $l$ gives
\[
  \log F(l)\le \log F(1)-2(l-1),
\]
and hence
\[
  F(l)\le F(1)e^{-2(l-1)}=CC_0\,e^{-2(l-1)}.
\]
After shrinking $C_0$ further, this yields
\[
  n\left(CC_0\frac{l}{n}\right)^l
  \le
  e^{-c'l}.
\]
Combining the two estimates, we obtain
\begin{equation}\label{eq:trunc-error}
  \bbP\!\left(
    \exists\, k \in T :\;
    W_k \ne \dist(A^k, H_{l,k})^{-2}
  \right)
  \;\le\; e^{-c'l}.
\end{equation}

\medskip
\noindent\textit{Conclusion.}
On the event that $W_k = \dist(A^k, H_{l,k})^{-2}$ for all $k \in T$, we have
$\|B\|_{HS}^2 = \sum_{k \in T} W_k$. Combining
\eqref{eq:sumW-tail} and \eqref{eq:trunc-error}:
\[
  \bbP\!\left(\|B\|_{HS}^2 > t^2 \frac{n}{l}\right)
  \;\le\;
  \left(\frac{C}{t}\right)^l + e^{-cl},
\]
which gives \eqref{eq:B_HS_upper} upon taking square roots.

\medskip
\noindent\textit{Uniformity over $A_l$.}
For each fixed $S$, the argument above is valid since for every $k \in T$ the
column $A^k$ is independent of $H_{l,k}$ regardless of the choice of $S$.
Applying a union bound over the $\binom{m_l}{l}\le e^{c_\vartheta l}$ choices
of $S \subseteq [m_l]$ gives
\[
  \bbP\!\left(
    \exists\, S\subseteq[m_l],\ |S|=l :
    \|B_S\|_{HS} > t\sqrt{\frac{n}{l}}
  \right)
  \;\le\;
  e^{c_\vartheta l}\left[\left(\frac{C}{t}\right)^l + e^{-cl}\right].
\]
Choosing $t$ to be a sufficiently large constant and taking
$c_\vartheta$ small proves \eqref{eq:B_HS_upper-linear} for
$l\le L\log(2n)$.

\medskip\noindent
It remains to consider $L\log(2n)\le l\le c_0n$. Fix deterministic
$S\subseteq[m_l]$, $|S|=l$, and $k\in T=[n]\setminus S$. Put
\[
  Q_{S,k}:=P_{H_{l,k}^\perp}.
\]
Condition on all columns except $A^k$. Then $Q_{S,k}$ is deterministic and
independent of $A^k$. Since $H_{l,k}$ is generated by $n-l-1$ columns,
\[
  \rank(Q_{S,k})\ge l+1.
\]
Applying Proposition~\ref{prop:one-scale-anisotropic} with
$D=Q_{S,k}$ and $Z=A^k$, and using
$\|Q_{S,k}\|=1$ and $\|Q_{S,k}\|_{HS}^2=\rank(Q_{S,k})$, we obtain
\[
  \PP\!\left(
    \dist(A^k,H_{l,k})\le \eta\sqrt l
    \,\middle|\, H_{l,k}
  \right)
  \le 2e^{-c_al}.
\]
Therefore
\[
  \PP\!\left(
    \exists\,S\subseteq[m_l],\ |S|=l,\ \exists\,k\notin S:
    \dist(A^k,H_{l,k})\le \eta\sqrt l
  \right)
  \le
  2n\binom{m_l}{l}e^{-c_al}.
\]
Choose $\vartheta$ so small that $c_\vartheta<c_a/4$, and then choose
$L>4/c_a$. Since $l\ge L\log(2n)$ in the present range,
$\log(2n)\le c_al/4$. Hence
\[
  2n\binom{m_l}{l}e^{-c_al}
  \le
  \exp\!\left(-\frac{c_a}{2}l\right).
\]
On the complementary event,
\[
  \|B_S\|_{HS}^2
  =
  \sum_{k\in T}\dist(A^k,H_{l,k})^{-2}
  \le
  \frac n{\eta^2l}
\]
simultaneously for all $S$. This proves \eqref{eq:B_HS_upper-linear}.
\end{proof}

\medskip\noindent
\textit{Upper operator-norm bound for \(B\).}

Fix any deterministic index set $S\subseteq[m_l]$ with $|S|=l$, and let
$T=[n]\setminus S$. By Lemma~\ref{lem:block-biorthogonal}, adapted to the
partition $(S,T)$:
\begin{equation}\label{eq:B-opnorm}
  \|B_S\|^2
  \;=\;
  s_{\min}(A_T)^{-2},
\end{equation}
where $A_T:=(A^j)_{j\in T}$ is the $n\times(n-l)$ submatrix of columns indexed
by $T$, and $s_{\min}(A_T)$ denotes its smallest singular value.

The matrix $A_T$ has $n-l$ independent columns, each a vector in $\R^n$ whose
entries satisfy \textnormal{(A1)--(A2)}. We apply
Theorem~\ref{smallest_sing_val_lower} to $A_T$ with $N=n$ and
$n_{\mathrm{cols}}=n-l$: for all sufficiently small $\varepsilon>0$,
\[
  \bbP\!\left(s_{\min}(A_T) \le \varepsilon\!\left(\sqrt{n+1}-\sqrt{n-l}\,\right)\right)
  \;\le\;\bigl(C\varepsilon\log(1/\varepsilon)\bigr)^{l+1}+e^{-cn}.
\]
For $l\le n/2$,
\[
  \sqrt{n+1}-\sqrt{n-l}\ge \frac{l}{2\sqrt n}.
\]
Taking $\varepsilon=\alpha^{-1}$, with $\alpha$ chosen large enough, gives
\[
  \bbP\!\left(s_{\min}(A_T) \le \frac{l}{2\alpha\sqrt{n}}\right)
  \;\le\;
  \left(\frac{C\log\alpha}{\alpha}\right)^{l+1}+e^{-cn}.
\]
On the complementary event, $s_{\min}(A_T)\ge l/(2\alpha\sqrt{n})$, and by
\eqref{eq:B-opnorm}:
\[
  \|B_S\| \;\le\; \frac{2\alpha\sqrt{n}}{l}.
\]

Since there are $\binom{m_l}{l}\le e^{c_\vartheta l}$ choices of $S$, a union
bound over all choices gives
\begin{equation}\label{eq:Bopnorm-unionbound}
  \bbP\!\left(\exists\,S\subseteq[m_l],\,|S|=l:\;\|B_S\|>\frac{2\alpha\sqrt{n}}{l}\right)
  \;\le\;
  e^{c_\vartheta l}\!\left[\left(\frac{C\log\alpha}{\alpha}\right)^{l+1}+e^{-cn}\right].
\end{equation}
Choose $\alpha$ sufficiently large so that $C\log\alpha/\alpha\le e^{-2}$.
Later we take $\vartheta$ small enough that $c_\vartheta\le1$; then the first
term is at most $e^{-c'l}$, and the second is negligible. After renaming
\(2\alpha\) as \(\alpha\) and adjusting constants, we conclude that
\begin{equation}\label{eq:step122-conclusion}
  \bbP\!\left(
    \exists\,S\subseteq[m_l],\, |S|=l:
    \|B_S\| > \frac{\alpha\sqrt{n}}{l}
  \right)
  \;\le\;
  \left(\frac{C\log\alpha}{\alpha}\right)^l + e^{-cn}.
\end{equation}

\begin{remark}
The correct scale for $\|B\|$ is $\alpha\sqrt{n}/l$, not $\alpha\sqrt{n/l}$.
This distinction is essential for the small-ball estimate below: the stable
rank of $B$ requires $\|B\|^2\sim n/l^2$ in the denominator.
\end{remark}

Define the global bad event
\begin{equation}\label{eq:E2}
  \begin{aligned}
  \cE_2
  :=
  \bigl\{A:\;&
    \exists\, S\subseteq[m_l],\ |S|=l \text{ such that }\\
  &
    \|B_S\|_{HS} < \sqrt{\frac{c n}{l}}
    \text{ or }
    \|B_S\|_{HS} > C\sqrt{\frac n l}
    \text{ or }
    \|B_S\| > \frac{\alpha\sqrt{n}}{l}
  \bigr\}.
  \end{aligned}
\end{equation}

\begin{proposition}[Uniform comparison-matrix bounds]
\label{prop:comparison-matrix-bounds}
The event $\cE_2$ satisfies
\begin{equation}\label{eq:E2bound}
  \bbP(\cE_2)
  \;\le\;
  e^{-C l} + \left(\frac{C\log\alpha}{\alpha}\right)^l + e^{-cn}.
\end{equation}
Moreover, on $\cE_2^c$, every deterministic choice of
$S\subseteq[m_l]$ with $|S|=l$ satisfies
\[
  \sqrt{\frac{c n}{l}}
  \le
  \|B_S\|_{HS}
  \le
  C\sqrt{\frac n l},
  \qquad
  \|B_S\|
  \le
  \frac{\alpha\sqrt n}{l}.
\]
\end{proposition}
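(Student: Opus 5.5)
The plan is to assemble Proposition~\ref{prop:comparison-matrix-bounds} directly from the three bounds already proved, by a union bound over the three failure modes in the definition \eqref{eq:E2}. Write
\[
  \cE_2 \subseteq \cE_2^{(a)}\cup\cE_2^{(b)}\cup\cE_2^{(c)},
\]
where the three events are defined as follows.
\begin{itemize}
  \item[$\cE_2^{(a)}$:] some $S$ has $\|B_S\|_{HS}<\sqrt{cn/l}$.
  \item[$\cE_2^{(b)}$:] some $S$ has $\|B_S\|_{HS}>C\sqrt{n/l}$.
  \item[$\cE_2^{(c)}$:] some $S$ has $\|B_S\|>\alpha\sqrt n/l$.
\end{itemize}
Then I estimate each event separately and add.

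For $\cE_2^{(a)}$, I use \eqref{eq:BHS-lower-indep}. For every $S\subseteq[m_l]$ with $|S|=l$, the quantity $\|B_S\|_{HS}^2$ dominates the single random variable $\sum_{k>m_l}\dist(A^k,H_{m_l,k})^{-2}$, which does not depend on $S$. Hence $\cE_2^{(a)}$ is contained in the event that this one sum is below $cn/l$, and Lemma~\ref{lem:BHS-lower} bounds its probability by $e^{-Cl}$ with no union bound over $S$.

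For $\cE_2^{(b)}$, the uniform statement \eqref{eq:B_HS_upper-linear} of Lemma~\ref{lem:FW_HS_B} already gives probability at most $e^{-cl}$. That lemma covers both the range $l\le L\log(2n)$ (via the union bound over $\binom{m_l}{l}\le e^{c_\vartheta l}$ sets) and the range above it. It is stated for $l\ge4$, which holds because the proof parameter satisfies $l\ge 2/\lambda$ with $\lambda\le1/2$.

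For $\cE_2^{(c)}$, I invoke \eqref{eq:step122-conclusion}, which gives probability at most $(C\log\alpha/\alpha)^l+e^{-cn}$. This is the step needing the most care. The union-bound factor $e^{c_\vartheta l}$ from \eqref{eq:Bopnorm-unionbound} must be absorbed into the base $C\log\alpha/\alpha$, and the prefactor $e^{c_\vartheta l}e^{-cn}$ must stay of order $e^{-cn}$. Both follow from the order of constants fixed in Section~\ref{subsec:proof-setup}: $\alpha$ is chosen first, then $\vartheta$ small so that $c_\vartheta\le1$, and $l\le c_0n$ with $c_0$ small makes $e^{c_\vartheta l-cn}\le e^{-cn/2}$. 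After renaming constants this gives the three-term bound \eqref{eq:E2bound}.

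The second assertion is just the contrapositive of the definition. On $\cE_2^c$ no $S$ violates any of the three inequalities, so every deterministic $S\subseteq[m_l]$ with $|S|=l$ satisfies the two-sided Hilbert--Schmidt bound and the operator-norm bound simultaneously. The constants $c,C$ in \eqref{eq:E2} are to be read as those produced by Lemmas~\ref{lem:BHS-lower} and~\ref{lem:FW_HS_B}, and $\alpha$ as the one from \eqref{eq:step122-conclusion}. I expect no real obstacle; the only bookkeeping is making the constant names consistent across the three inputs.
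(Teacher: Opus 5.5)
Your proposal is correct and is essentially the paper's proof. Like the paper, you take a union bound over the three alternatives defining $\cE_2$, using Lemma~\ref{lem:BHS-lower} (with no union over $S$, since the lower bound \eqref{eq:BHS-lower-indep} does not depend on $S$), the uniform bound \eqref{eq:B_HS_upper-linear} of Lemma~\ref{lem:FW_HS_B}, and \eqref{eq:step122-conclusion}, and you read off the second assertion as the complement of \eqref{eq:E2}; your extra bookkeeping for absorbing the factor $e^{c_\vartheta l}$ just repeats what the paper already does in passing from \eqref{eq:Bopnorm-unionbound} to \eqref{eq:step122-conclusion}.
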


\begin{proof}
By Lemmas~\ref{lem:BHS-lower}, \ref{lem:FW_HS_B}, and
\eqref{eq:step122-conclusion}, each of the three alternatives in
\eqref{eq:E2} has the stated probability bound after adjusting constants.
The displayed bounds are exactly the complement of $\cE_2$.
\end{proof}

\medskip\noindent
\textit{Small ball for the deterministic block.}

For such a deterministic $S$, write
\[
  X_S(y) := A_S y,
  \qquad
  B_S := \text{the comparison matrix associated with } S.
\]
For this fixed $S$, define the local good event
\[
  \mathcal G_S
  :=
  \left\{
    \|B_S\|_{HS} \ge \sqrt{\frac{c n}{l}}
    \;\;\text{and}\;\;
    \|B_S\| \le \frac{\alpha\sqrt{n}}{l}
  \right\}.
\]
On $\mathcal G_S$, the stable rank of $B_S$ satisfies
\begin{equation}\label{eq:stable-rank}
  \sr(B_S)
  =
  \frac{\|B_S\|_{HS}^2}{\|B_S\|^2}
  \;\ge\;
  \frac{c(n/l)}{\alpha^2(n/l^2)}
  \;=\;
  \frac{c\,l}{\alpha^2}.
\end{equation}

\begin{lemma}[Pointwise comparison-image lower bound]
\label{lem:pointwise-comparison-image}
For every deterministic choice of \(S\subseteq[m_l]\) with \(|S|=l\) and every
fixed \(y\in S^{l-1}\),
\begin{equation}\label{eq:BX-smallball-aux}
  \bbP\!\left(
    \|B_S X_S(y)\|_2 \le c_\ast\sqrt{\frac{n}{l}}
    \ \text{ and }\ 
    \cE_2^c
  \right)
  \;\le\;
  e^{-c_{\mathrm{sb}} l}.
\end{equation}
\end{lemma}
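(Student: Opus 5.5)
Fix a deterministic $S\subseteq[m_l]$ with $|S|=l$ and a unit vector $y\in S^{l-1}$. The plan is to condition on the columns $\{A^j:j\in T\}$, which makes $B_S$ deterministic. By Lemma~\ref{lem:block-biorthogonal}(1), $B_S$ depends only on these columns. Under this conditioning $X_S(y)=A_Sy=\sum_{i\in S}y_iA^i$ is an independent random vector. The event $\cE_2^c$ is not measurable with respect to the outside columns alone, so we first enlarge it: $\cE_2^c\subseteq\mathcal G_S$, and $\mathcal G_S$ depends only on $B_S$, hence only on $\{A^j:j\in T\}$. It therefore suffices to bound
\[
  \bbP\!\left(\|B_SX_S(y)\|_2\le c_\ast\sqrt{n/l}\ \middle|\ \{A^j:j\in T\}\right)
\]
on $\mathcal G_S$, and then average.

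Next we identify the coordinate structure of $X_S(y)$. Its coordinates are $(X_S(y))_r=\sum_{i\in S}y_i\xi_{ri}$, for $r=1,\dots,n$. These are independent over $r$, since distinct rows use disjoint entries. Each has second moment $\sum_i y_i^2=1$, and by Lemma~\ref{lem:scalar-linear-smallball} each satisfies $\mathcal L\bigl((X_S(y))_r,\rho\bigr)\le\kappa$ with $\rho,\kappa$ depending only on $a,b$. Thus $Z:=X_S(y)$ satisfies the hypotheses of Proposition~\ref{prop:one-scale-anisotropic}. We apply that proposition with $D=B_S$ (an $(n-l)\times n$ matrix) and obtain
\[
  \bbP\bigl(\|B_SZ\|_2\le\eta\|B_S\|_{HS}\bigr)\le 2\exp\bigl(-c\,\sr(B_S)\bigr).
\]

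On $\mathcal G_S$ we have $\eta\|B_S\|_{HS}\ge\eta\sqrt{cn/l}$, so we set $c_\ast:=\eta\sqrt c$. The stable-rank bound \eqref{eq:stable-rank} gives $\sr(B_S)\ge cl/\alpha^2$. Hence the conditional probability is at most $2e^{-c l/\alpha^2}$. Integrating over the outside columns restricted to $\mathcal G_S$ gives $\bbP(\|B_SX_S(y)\|_2\le c_\ast\sqrt{n/l},\ \mathcal G_S)\le 2e^{-cl/\alpha^2}$. The factor $2$ is absorbed into $c_{\mathrm{sb}}$ because $l\ge 2/\lambda$ is bounded below. Alternatively, for small $l$ the bound can be made trivial by reducing the constant.

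The main point needing care is this measurability issue. $\cE_2$ quantifies over all $S$, so it involves the columns in $S$ and is not independent of $X_S(y)$. We resolve this by dropping from $\cE_2^c$ to the single local event $\mathcal G_S$, which is measurable with respect to $\{A^j:j\in T\}$. The other point is that $\alpha$ is fixed before $c_{\mathrm{sb}}$, so $c_{\mathrm{sb}}\asymp 1/\alpha^2$ depends only on $a,b$. This is harmless, since $\vartheta$, and hence the union-bound cost $e^{c_\vartheta l}$ together with the later net counting, is chosen afterwards in the order stated in the setup.
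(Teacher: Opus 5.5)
Your proposal is correct and follows the paper's proof essentially step for step: condition on the columns indexed by $T$, enlarge $\cE_2^c$ to the $T$-measurable event $\mathcal G_S$, check the hypotheses of Proposition~\ref{prop:one-scale-anisotropic} for $Z=X_S(y)$ via Lemma~\ref{lem:scalar-linear-smallball}, and combine with the stable-rank bound \eqref{eq:stable-rank} and $c_\ast=\eta\sqrt c$. The only slip is your ``alternative'' for small $l$: since $e^{-c_{\mathrm{sb}}l}<1$, reducing the constant never makes the bound trivial, so absorbing the factor $2$ really does rely on $l\ge 2/\lambda$ with $\lambda$ chosen after $c_{\mathrm{sb}}$, exactly as the paper arranges.
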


\begin{proof}
For each row index $i$,
\[
  (X_S(y))_i
  =
  \sum_{j\in S} y_j\xi_{ij}.
\]
The summands are independent, have mean zero and variance one, and satisfy the
uniform anti-concentration hypothesis. By
Lemma~\ref{lem:scalar-linear-smallball}, there exist constants
\[
  \rho_0=\rho_0(a,b)>0,
  \qquad
  \kappa_0=\kappa_0(a,b)<1,
\]
such that
\[
  \mathcal L\bigl((X_S(y))_i,\rho_0\bigr)\le \kappa_0
  \qquad\text{for every }i.
\]
Moreover, the coordinates of $X_S(y)$ are independent and satisfy
$\mathbb E (X_S(y))_i^2=1$. They need not be subgaussian; this small-ball
bound is the input used in place of the subgaussian estimates from
\cite[Theorems~2.4 and~2.5]{Feng_Wei_intermediate}.

Condition on the columns indexed by $T=[n]\setminus S$. Then $B_S$ is
deterministic and independent of $X_S(y)$. Applying
Proposition~\ref{prop:one-scale-anisotropic} with $D=B_S$ and $Z=X_S(y)$ gives
\[
  \mathbb P\!\left(
    \|B_SX_S(y)\|_2
    \le
    \eta\|B_S\|_{HS}
    \,\middle|\,
    B_S
  \right)
  \le
  2\exp\!\left(-c\,\sr(B_S)\right),
\]
where $\eta,c>0$ depend only on $a,b$. On the event $\mathcal G_S$, this
implies
\[
  \mathbb P\!\left(
    \|B_SX_S(y)\|_2
    \le
    \eta\sqrt{\frac{c n}{l}}
    \,\middle|\,
    B_S
  \right)\mathbf 1_{\mathcal G_S}
  \le
  2\exp\!\left(-c_2\frac{l}{\alpha^2}\right)\mathbf 1_{\mathcal G_S}.
\]
Choose the constant in \eqref{eq:BX-smallball-aux} so that
\[
  c_\ast\le \eta\sqrt{c}.
\]
Taking expectations yields
\[
  \mathbb P\!\left(
    \|B_SX_S(y)\|_2
    \le
    c_\ast\sqrt{\frac{n}{l}}
    \ \text{and}\ 
    \mathcal G_S
  \right)
  \le
  2\exp\!\left(-c_2\frac{l}{\alpha^2}\right).
\]
Since $\cE_2^c\subseteq \mathcal G_S$, the same bound holds with $\cE_2^c$ in
place of $\mathcal G_S$. After decreasing the exponential constant, this
proves the lemma.
\end{proof}

Since $\alpha$ is now fixed, the exponent $c_{\mathrm{sb}}>0$ in
Lemma~\ref{lem:pointwise-comparison-image} is fixed. We choose the
oversampling parameter
$\vartheta=\vartheta(a,b)>0$ small enough that
\[
  c_\vartheta < \frac{c_{\mathrm{sb}}}{4},
\]
and so that the earlier restrictions on \(c_\vartheta\) remain valid. Since
there are at most $\binom{m_l}{l}\le e^{c_\vartheta l}$ possible choices of
$S$, a union bound over all deterministic choices of $S$ shows that for the
selected block,
\begin{equation}\label{eq:BX-smallball}
  \bbP\!\left(
    \|B A_l y\|_2 \le c_\ast\sqrt{\frac{n}{l}}
    \ \text{ and }\ 
    \cE_2^c
  \right)
  \;\le\;
  e^{-c_{\mathrm{sb}}' l}
\end{equation}
for some $c_{\mathrm{sb}}'=c_{\mathrm{sb}}'(a,b)>0$.

\medskip\noindent
\textit{Conclusion of the pointwise step.}

Define the combined bad event
\[
  \cE_{\mathrm{bad}}(y)
  \;:=\;
  \cE_1\cup\cE_2
  \;\cup\;
  \left\{A:\;\|B A_l y\|_2 \le c_\ast\sqrt{\frac{n}{l}}\right\}.
\]
From \eqref{eq:step11-conclusion}, \eqref{eq:E2bound}, and
\eqref{eq:BX-smallball}:
\begin{equation}\label{eq:step1-total}
  \bbP(\cE_{\mathrm{bad}}(y))
  \;\le\;
  e^{-c_1 l}
  \;+\;
  e^{-C l} + \left(\frac{C\log\alpha}{\alpha}\right)^l + e^{-cn}
  \;+\;
  e^{-c_{\mathrm{sb}}' l}.
\end{equation}
For $\alpha$ a sufficiently large constant depending only on $a$ and $b$, each
term is at most $e^{-c' l}$ for some $c'>0$. Hence there exist constants
$c',C'>0$, depending only on $a$ and $b$,
such that on $\cE_{\mathrm{bad}}(y)^c$:
\begin{align}
  \|U(y)\|_2 &\;=\;\|P_l^\perp A_l y\|_2 \;\le\; C' l, \label{eq:U-upper}\\
  \|A^{-1}U(y)\|_2 &\;\ge\; \|B A_l y\|_2 \;\ge\; c'\sqrt{\frac{n}{l}},
  \label{eq:U-lower}
\end{align}
for the selected block $A_l$, with
\begin{equation}\label{eq:step1-conclusion}
  \bbP\!\left(\|U(y)\|_2 > C' l
    \;\text{ or }\;
    \|A^{-1}U(y)\|_2 < c'\sqrt{\frac{n}{l}}\right)
  \;\le\;e^{-c' l}.
\end{equation}

\subsection{Net argument for the reduced block}\label{subsec:net-reduced}

The pointwise small-ball estimate gives an \(e^{-cl}\) bound, but a net over
the full \(l\)-dimensional sphere would be too large to absorb uniformly with
the available constants. We therefore retain only
\(l'=\lfloor\lambda l\rfloor\) columns, with \(\lambda>0\) chosen small.

Let $\alpha > 1$ be fixed as above. Let
$\varepsilon\in(0,1)$ be fixed small enough for the final approximation step
below. In the logical order, the reduced-block estimate fixes the constant
$C'$ in \eqref{eq:Ainv-HS} before this choice is made. Then choose
$\lambda \in (0,1)$ after $\varepsilon$ is fixed, and set
$l' := \lfloor \lambda l \rfloor$. We first prove a net-point lower bound
uniformly over deterministic choices of $(S,J)$; after that, the approximation
argument is applied only to the selected block $A_l$ from
Subsection~\ref{subsec:pointwise-selected} and to its
sub-collections $A_{l'}\subseteq A_l$.

\subsubsection{Net and approximation}

Specializing Theorem~\ref{thm:hs-net} to dimension $l'$, we fix a deterministic
net
\[
  \mathcal{N}_\varepsilon
  \subset
  \frac32 B_2^{l'}\setminus \frac12 B_2^{l'}
\]
with the following two properties.

\medskip
\noindent Cardinality:
\[
    |\mathcal{N}_\varepsilon| \;\le\; \left(\frac{C_{\mathrm{net}}}{\varepsilon}\right)^{l'}.
\]

\noindent Hilbert--Schmidt approximation guarantee:
For every matrix $M$ with $l'$ columns and every $y \in S^{l'-1}$, there
exists $y_i=y_i(M,y)\in \mathcal{N}_\varepsilon$ such that
\begin{equation}
    \|M(y - y_i)\|_2 \;\le\; \frac{\sqrt{2}\,\varepsilon}{\sqrt{l'}}\,\|M\|_{HS},
\end{equation}
where \eqref{eq:lattice-approx} is exactly the estimate recorded in
Theorem~\ref{thm:hs-net}. The net points are not necessarily unit vectors, but
their Euclidean norms lie between $1/2$ and $3/2$.

Once a sub-collection $A_{l'} \subseteq A_l$ is fixed, write
$U_{l'}(y) := P_l^\perp A_{l'} y$ for $y \in \mathbb{R}^{l'}$. We prove
$\|A^{-1}U_{l'}(y)\|_2 \gtrsim \sqrt{n/l}$ for all
$y \in S^{l'-1}$ simultaneously. Establishing a uniform lower bound over the
annular net suffices: for every $y \in S^{l'-1}$, let
$y_i \in \mathcal{N}_\varepsilon$ be its net approximant, so that

\[
    \|A^{-1}U_{l'}(y)\|_2
    \;\ge\;
    \|A^{-1}U_{l'}(y_i)\|_2
    \;-\;
    \|A^{-1}P_l^\perp A_{l'}(y - y_i)\|_2.
\]

The second term is controlled by applying \eqref{eq:lattice-approx} with
$M = A^{-1}P_l^\perp A_{l'}$, which requires the Hilbert--Schmidt bound from
the reduced-block construction below. The first term is handled by the
pointwise estimate above and a union bound over $\mathcal{N}_\varepsilon$.
Two estimates are needed to pass from pointwise control to the full sphere:
a lower bound on the net points and a Hilbert--Schmidt bound on the reduced
map.

\subsubsection{Net-point lower bound for the inverse image}

Fix deterministic sets
\[
  S\subseteq[m_l],\quad |S|=l,
  \qquad
  J\subseteq S,\quad |J|=l',
  \qquad
  y_i\in\mathcal N_\varepsilon.
\]
Let
\[
  H_S:=\Span(A^j:j\notin S),
  \qquad
  P_S^\perp:=P_{H_S^\perp}.
\]
Let $\widetilde y_i\in\mathbb R^l$ be the vector obtained from $y_i$ by
extending it by zero on $S\setminus J$. Then
\[
  A_S\widetilde y_i=A_Jy_i,
  \qquad
  X_S(\widetilde y_i)=A_Jy_i.
\]
Moreover,
\[
  \frac12\le \|\widetilde y_i\|_2=\|y_i\|_2\le\frac32.
\]
Set $\widehat y_i:=\widetilde y_i/\|\widetilde y_i\|_2\in S^{l-1}$.
For this deterministic choice of $S$, the comparison matrix $B_S$ is
determined by the columns outside $S$ and is independent of $A_S$. Hence
\eqref{eq:BX-smallball-aux}, applied with $y=\widehat y_i$, gives
\[
  \PP\!\left(
    \|B_SA_Jy_i\|_2
    \le \frac{c_\ast}{2}\sqrt{\frac nl},
    \ \cE_2^c
  \right)
  \le e^{-c_{\mathrm{sb}}l}.
\]
Indeed, if the displayed event occurs, then
\[
  \|B_SA_S\widehat y_i\|_2
  =
  \frac{\|B_SA_Jy_i\|_2}{\|\widetilde y_i\|_2}
  \le
  c_\ast\sqrt{\frac nl}.
\]
Now apply Lemma~\ref{lem:block-biorthogonal} to the partition
$[n]=S\sqcup([n]\setminus S)$. Since $P_S^\perp$ is the projection onto the
orthogonal complement of the span of the columns outside $S$, we have
\[
  \|A^{-1}P_S^\perp A_Jy_i\|_2^2
  =
  \|\widetilde y_i\|_2^2+\|B_SA_Jy_i\|_2^2
  \ge
  \|B_SA_Jy_i\|_2^2.
\]
Thus the inverse-image lower bound can fail only if the comparison-matrix
lower bound fails. After decreasing the constant \(c>0\), we obtain
\[
  \PP\!\left(
    \|A^{-1}P_S^\perp A_Jy_i\|_2
    \le c\sqrt{\frac nl},
    \ \cE_2^c
  \right)
  \le e^{-c_{\mathrm{sb}}l}.
\]
The number of triples $(S,J,y_i)$ is at most
\[
  \binom{m_l}{l}\binom l{l'}|\mathcal N_\varepsilon|
  \le
  \exp\!\left[
    c_\vartheta l
    +\lambda l\log\frac e\lambda
    +\lambda l\log\frac{C_{\mathrm{net}}}{\varepsilon}
  \right].
\]
The parameter $\varepsilon$ has already been fixed. By the choice of
\(\vartheta\), the first term in the exponent above is at most
\(c_{\mathrm{sb}}l/4\). Now choose \(\lambda\) sufficiently small so that
\[
  \lambda \log\frac e\lambda
  +\lambda \log\frac{C_{\mathrm{net}}}{\varepsilon}
  \le \frac{c_{\mathrm{sb}}}{4}.
\]
Then the total exponent is at most \(c_{\mathrm{sb}}l/2\). Hence
\begin{equation}\label{eq:net-lb}
  \PP\!\left(
    \exists\,S,J,y_i:
    \|A^{-1}P_S^\perp A_Jy_i\|_2
    \le c\sqrt{\frac nl},
    \ \cE_2^c
  \right)
  \le e^{-cl}.
\end{equation}
Because the union bound is over all deterministic triples, this estimate also
applies after the subcollection $J(S)$ is chosen adaptively below.

\subsubsection{Hilbert--Schmidt bound for the reduced block}

We show that every choice of $A_l \subseteq A_{m_l}$ contains a sub-collection
of $l'$ columns forming $A_{l'}$ such that, if $S$ denotes the index set of
$A_l$ and $B_S$ the corresponding comparison matrix,
\begin{equation}\label{eq:BAl-HS}
    \|B_SA_{l'}\|_{HS} \;\le\; C_K\sqrt{\lambda n}.
\end{equation}
and consequently
\begin{equation}\label{eq:Ainv-HS}
    \|A^{-1}P_l^\perp A_{l'}\|_{HS} \;\le\; C'\sqrt{\lambda n}.
\end{equation}

\medskip\noindent
Fix a deterministic choice of index set $S \subseteq [m_l]$ with $|S| = l$,
let $T := [n] \setminus S$, and let $B_S$ denote the corresponding matrix $B$
from the proof notation. Recall that $B_S$ is determined by
$\{A^j : j \in T\}$, so it is independent of every column $A^j$ for
$j \in S$ (Lemma~\ref{lem:block-biorthogonal}). For each
$j \in S$, define the good event
\[
    \mathsf{G}_S(A^j)
    \;:=\;
    \left\{\|B_SA^j\|_2 \;\le\; K\|B_S\|_{HS}\right\},
\]
where $K$ is a large constant depending only on $a$ and $b$. By isotropy
($\mathbb{E}[A^j(A^j)^T] = I_n$) and independence of $A^j$ from $B_S$:
\[
    \mathbb{E}\!\left[\|B_SA^j\|_2^2 \,\Big|\, B_S\right]
    \;=\; \mathrm{tr}(B_S^T B_S)
    \;=\; \|B_S\|_{HS}^2.
\]
By Markov's inequality,
\[
    \mathbb{P}\!\left(\mathsf{G}_S(A^j)^c \,\Big|\, B_S\right)
    \;=\;
    \mathbb{P}\!\left(
      \|B_SA^j\|_2^2 > K^2\|B_S\|_{HS}^2
      \,\Big|\, B_S
    \right)
    \;\le\; K^{-2}
    \;=:\; \eta.
\]
Thus $\eta = K^{-2}$ can be made arbitrarily small by choosing $K$ large.

\medskip\noindent
Conditionally on $B_S$, the columns $\{A^j\}_{j \in S}$ are mutually
independent, though not necessarily identically distributed. Hence the events
$\{\mathsf{G}_S(A^j)\}_{j \in S}$ are conditionally independent given $B_S$,
each with conditional probability at least $1-\eta$. The count
\[
    N_S := \#\{j \in S : \mathsf{G}_S(A^j)\}
\]
therefore stochastically dominates $\mathrm{Bin}(l,1-\eta)$ given $B_S$. By
the Chernoff bound,
\begin{equation}\label{eq:chernoff}
    \mathbb{P}\!\left(N_S < l/2 \,\Big|\, B_S\right)
    \;\le\; e^{-c_3(\eta) l}
\end{equation}
for a positive constant $c_3(\eta)$. Choose $K$ so large that
$c_3(\eta) > 2c_\vartheta$, and abbreviate $c_3 := c_3(\eta)$. Averaging
\eqref{eq:chernoff} over $B_S$ gives the unconditional estimate
\[
    \mathbb{P}(N_S < l/2) \;\le\; e^{-c_3 l}.
\]
Since $l' = \lfloor\lambda l\rfloor \le l/2$ for $\lambda \le 1/2$, on the
complementary event there are at least $l'$ good columns in $S$.

\medskip\noindent
We again take a union bound, this time over all deterministic choices of
$S \subseteq [m_l]$ with
$|S| = l$:
\begin{equation}\label{eq:union-good}
    \mathbb{P}\!\left(
      \exists\, S \subseteq [m_l],\ |S| = l \text{ such that } N_S < l/2
    \right)
    \;\le\; e^{c_\vartheta l} \cdot e^{-c_3 l}
    \;\le\; e^{-c_4 l},
\end{equation}
where $c_4 := c_3 - c_\vartheta > 0$.

For every deterministic $S\subseteq[m_l]$ with $|S|=l$, define $J(S)$ on all
outcomes as follows. If $S$ contains at least $l'$ good columns, choose
$J(S)\subseteq\{j\in S:\mathsf G_S(A^j)\}$ with $|J(S)|=l'$. Otherwise choose
an arbitrary $l'$-element subset of $S$. Set $A_{l'} := (A^j)_{j\in J(S)}$.
On the complement of the event in \eqref{eq:union-good}, this choice consists
only of good columns for every admissible $S$.

\medskip\noindent
On this event, for every $j \in J(S)$:
\[
    \|B_SA^j\|_2
    \;\le\; K\|B_S\|_{HS}.
\]
Therefore,
\begin{equation}\label{eq:BAl-via-BHS}
    \|B_SA_{l'}\|_{HS}^2
    \;=\;
    \sum_{j \in J(S)}\|B_SA^j\|_2^2
    \;\le\;
    l' \cdot K^2\|B_S\|_{HS}^2.
\end{equation}
By Lemma~\ref{lem:FW_HS_B}, $\|B_S\|_{HS} \le C\sqrt{n/l}$ with probability at
least $1 - e^{-cl}$, uniformly over all choices of $A_l \subseteq A_{m_l}$.
Since $l'\le \lambda l$, \eqref{eq:BAl-via-BHS} gives
\[
    \|B_SA_{l'}\|_{HS}^2
    \;\le\; \lambda l \cdot K^2 \cdot C^2\frac{n}{l}
    \;=\; C_K \lambda n,
\]
which establishes \eqref{eq:BAl-HS}. Using
Lemma~\ref{lem:block-biorthogonal}, for each $j \in J(S)$ we have
\[
  \|A^{-1}P_l^\perp A_{l'} e_j\|_2^2 = 1 + \|B_SA^j\|_2^2.
\]
Here \(e_j\) denotes the coordinate vector corresponding to the selected
column \(A^j\) in the domain of \(A_{l'}\).
Therefore
\begin{equation}\label{eq:Ainv-HS-derived}
    \|A^{-1}P_l^\perp A_{l'}\|_{HS}^2
    \;=\; l' + \|B_SA_{l'}\|_{HS}^2
    \;\le\; \lambda l + C_K \lambda n
    \;\le\; C'\lambda n,
\end{equation}
which gives \eqref{eq:Ainv-HS}. We do not need a bound of order $\sqrt{n/l}$
here. The approximation error carries an additional factor
$1/\sqrt{l'} \sim 1/\sqrt{l}$, so the bound
$\|A^{-1}P_l^\perp A_{l'}\|_{HS} \lesssim \sqrt{n}$ is enough.

\subsubsection{Conclusion of the net argument}

Define the bad events
\begin{equation}\label{eq:E3E4}
\begin{aligned}
    \cE_3 &:=
    \left\{A : \|A^{-1}P_l^\perp A_{l'}\|_{HS} \ge 2\sqrt{C'\lambda n}\right\}
    \cup
    \left\{A : \|P_l^\perp A_{l'}\|_{HS} \ge Cl\right\}
    \cup \cE_1 \cup \cE_2,\\
    \cE_4 &:=
    \left\{A : \exists\, y_i \in \mathcal{N}_\varepsilon
    \;\text{such that}\;
    \|A^{-1}U_{l'}(y_i)\|_2
    \le \frac{c}{2}\sqrt{\frac{n}{l}}\right\}.
\end{aligned}
\end{equation}
The estimates above give
\begin{equation}\label{eq:E3-bound}
    \mathbb{P}(\cE_3) \;\le\; e^{-c l} + e^{-c_4 l} + \mathbb{P}(\cE_2) + \mathbb{P}(\cE_1).
\end{equation}
By the deterministic-triples bound \eqref{eq:net-lb}, applied to the selected
triple after $J(S)$ is chosen,
\begin{equation}\label{eq:E4-bound}
    \mathbb{P}(\cE_4\cap\cE_2^c)
    \;\le\; e^{-c l}.
\end{equation}
After the earlier choice of $\alpha$ and after decreasing $c>0$, the bound
\eqref{eq:E3-bound} gives $\bbP(\cE_3)\le Ce^{-cl}$. Since
$\cE_2\subseteq\cE_3$, the event
\[
  \Omega_l:=(\cE_3\cup\cE_4)^c
\]
satisfies
\begin{equation}\label{eq:Omega-bound}
  \bbP(\Omega_l^c)
  \le
  C e^{-c l}.
\end{equation}

\medskip\noindent
Assume $\Omega_l$ and fix any $y \in S^{l'-1}$.
Let $y_i \in \mathcal{N}_\varepsilon$ be its net approximant. Applying
\eqref{eq:lattice-approx} with $M = A^{-1}P_l^\perp A_{l'}$ and using
$l'\ge \lambda l/2$:
\[
    \|A^{-1}U_{l'}(y) - A^{-1}U_{l'}(y_i)\|_2
    \;\le\;
    \frac{\sqrt{2}\,\varepsilon}{\sqrt{l'}}\,\|A^{-1}P_l^\perp A_{l'}\|_{HS}
    \;\le\;
    \frac{\sqrt{2}\,\varepsilon}{\sqrt{l'}} \cdot 2\sqrt{C'\lambda n}
    \;\le\; 4\sqrt{C'}\,\varepsilon\sqrt{\frac{n}{l}}.
\]
By the choice of $\varepsilon$, $4\sqrt{C'}\,\varepsilon < c/4$.
Since $\|A^{-1}U_{l'}(y_i)\|_2 > \frac{c}{2}\sqrt{n/l}$ on
$\cE_4^c$, we obtain
\begin{equation}\label{eq:step2-conclusion}
    \|A^{-1}U_{l'}(y)\|_2
    \;\ge\; \frac{c}{2}\sqrt{\frac{n}{l}} - \frac{c}{4}\sqrt{\frac{n}{l}}
    \;=\; \frac{c}{4}\sqrt{\frac{n}{l}}
    \qquad \text{for all } y \in S^{l'-1}.
\end{equation}
On the same event $\Omega_l$, the definition of $\cE_3$ gives
\begin{equation}\label{eq:U-HS-good}
  \|P_l^\perp A_{l'}\|_{HS}\le Cl.
\end{equation}

\subsection{Completion of the fixed-scale proof}

We now combine the preceding estimates to prove
Proposition~\ref{prop:fixed-scale}.

\subsubsection{Passage to singular values}

Set
\[
  U:=P_l^\perp A_{l'}:\mathbb R^{l'}\to\mathbb R^n.
\]
On $\Omega_l$, \eqref{eq:step2-conclusion} and \eqref{eq:U-HS-good} allow us
to apply Lemma~\ref{lem:singular-subspace-reduction} with
\[
  D=A,\qquad
  a=\frac{c}{4}\sqrt{\frac nl},\qquad
  H=Cl,\qquad
  m=l'.
\]
Since $l'\ge\lambda l/2$, there is a subspace
$E_{l'}\subseteq H_l^\perp$ of dimension
\[
  d_{l'}:=\left\lceil\frac{l'}{2}\right\rceil
\]
such that
\[
  \min_{u\in S_{E_{l'}}}\|A^{-1}u\|_2
  \ge
  c_\lambda\frac{\sqrt n}{l}.
\]
By the min-max characterization of singular values,
\[
  s_{d_{l'}}(A^{-1})
  =
  \max_{\dim F=d_{l'}}\ \min_{u\in S_F}\|A^{-1}u\|_2
  \ge
  \min_{u\in S_{E_{l'}}}\|A^{-1}u\|_2
  \ge
  c_\lambda\frac{\sqrt n}{l}.
\]
Equivalently,
\begin{equation}\label{eq:sd-A-upper}
  s_{n+1-d_{l'}}(A)\le C_\lambda\frac{l}{\sqrt n}.
\end{equation}
We have therefore shown that there exist constants $c_9,c_{10},C_9,C_{10}>0$ such
that for every proof parameter $m$ satisfying
\[
  \frac{2}{\lambda}\le m \le c_{10}n,
\]
one has
\begin{equation}\label{eq:intermediate-m}
  \bbP\!\left(
    s_{n+1-d_m}(A)
    \ge C_{10}\frac{m}{\sqrt{n}}
  \right)
  \;\le\; C_9e^{-c_9 m}.
\end{equation}
Here
\[
  d_m:=\left\lceil\frac{\lfloor\lambda m\rfloor}{2}\right\rceil.
\]
This follows from \eqref{eq:sd-A-upper}, with $m$ in place of the proof
parameter $l$, together with \eqref{eq:Omega-bound}.

\medskip\noindent
It remains to derive Proposition~\ref{prop:fixed-scale} from
\eqref{eq:intermediate-m}. Fix a target index $l$ satisfying
\[
  1 \le l \le c_0 n,
\]
where $c_0 > 0$ will be chosen shortly, and set
\[
  m := \left\lceil \frac{2l}{\lambda} \right\rceil.
\]
Then
\[
  m\ge \frac{2}{\lambda},
  \qquad
  \lfloor \lambda m \rfloor \ge 2l-1,
  \qquad\text{so}\qquad
  d_m=\left\lceil\frac{\lfloor\lambda m\rfloor}{2}\right\rceil\ge l.
\]
By monotonicity of singular values,
\[
  s_{n+1-l}(A) \le s_{n+1-d_m}(A).
\]
If $c_0$ is chosen so that $(2/\lambda+1)c_0 \le c_{10}$, then
$m \le c_{10} n$, so \eqref{eq:intermediate-m} applies. Since also
$m \le (2/\lambda + 1)l \le C_\lambda l$, we obtain
\[
  \left\{
    s_{n+1-l}(A) \ge C_{10} C_\lambda\frac{l}{\sqrt{n}}
  \right\}
  \subseteq
  \left\{
    s_{n+1-d_m}(A)
    \ge C_{10}\frac{m}{\sqrt{n}}
  \right\},
\]
and therefore
\begin{equation}\label{eq:claim-proof-small-l}
  \bbP\!\left(s_{n+1-l}(A) \ge C_1\frac{l}{\sqrt{n}}\right)
  \;\le\; C_9e^{-c_9 m}
  \;\le\; e^{-c_{11} l}
  \qquad\text{for all }1 \le l \le c_0 n,
\end{equation}
with constants $C_1, c_{11} > 0$ depending only on $a$, $b$, and $\lambda$.
Here $\lambda$ was chosen small enough, after the preceding constants were
fixed, to absorb the fixed prefactor $C_9$ into the exponential. This proves
Proposition~\ref{prop:fixed-scale}.

\section{Bulk Completion and Consequences}
\label{sec:rescaling-consequences}

This section proves the remaining fixed-proportion estimate and then derives
Theorem~\ref{thm:main} and the corollaries.

\subsection{A bulk estimate by column trimming}
\label{subsec:bulk-trimming}

We use the following elementary estimate in the fixed-proportion range.

\begin{lemma}[Bulk bound by column trimming]
\label{lem:bulk-trimming}
Let
\[
  M=[X_1,\dots,X_n]
\]
be an $n\times n$ random matrix with independent columns satisfying
\[
  \EE\|X_j\|_2^2\le n,
  \qquad j\in[n].
\]
Then, for every $\delta\in(0,1)$, there exist absolute constants $c,C>0$ such
that
\[
  \bbP\!\left(
    s_{\lceil\delta n\rceil}(M)
    > C\delta^{-1/2}\sqrt n
  \right)
  \le
  \exp(-c\delta n).
\]
Consequently, on the same event,
\[
  s_k(M)\le C\delta^{-1/2}\sqrt n
  \qquad
  \text{for every }k\ge\lceil\delta n\rceil.
\]
\end{lemma}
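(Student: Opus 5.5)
We follow the heuristic from the introduction: discard the longest columns, then use Hilbert--Schmidt control on what remains. Set $k:=\lceil\delta n\rceil$ and $k_1:=\lceil k/2\rceil$. We may assume $\delta n\ge 2$; otherwise the probability bound is trivial after adjusting $c$.

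\textbf{Step 1 (few long columns).} Call column $j$ \emph{long} if $\|X_j\|_2^2>4n/\delta$. By Markov's inequality, each column is long with probability at most $\delta/4$. The indicators of being long are independent because the columns are. So the number $N$ of long columns is stochastically dominated by $\mathrm{Bin}(n,\delta/4)$, whose mean is at most $\delta n/4$. The Chernoff bound gives
\[
  \PP\bigl(N\ge \delta n/2\bigr)\le \exp(-c\delta n),
\]
with $c$ absolute (the threshold is twice the mean). On the complement, $N<k_1$.

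\textbf{Step 2 (trim and bound by Hilbert--Schmidt).} Let $M'$ be $M$ with the long columns replaced by zero. Then $M-M'$ has at most $N<k_1$ nonzero columns, so $\rank(M-M')<k_1$. Weyl's inequality for singular values, $s_{i+j-1}(M)\le s_i(M')+s_j(M-M')$, applied with $j=k_1$ gives
\[
  s_{k}(M)\le s_{k-k_1+1}(M')+s_{k_1}(M-M')=s_{k-k_1+1}(M').
\]
Here $s_{k_1}(M-M')=0$, and $k-k_1+1\ge k/2\ge \delta n/2$.

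We cannot use a deterministic bound on $\|M'\|_{HS}^2$, since this could be as large as $n\cdot 4n/\delta$. Instead we bound it in probability. The variables $Y_j:=\|X_j\|_2^2\one\{\|X_j\|_2^2\le 4n/\delta\}$ are independent, lie in $[0,4n/\delta]$, and satisfy $\sum_j\EE Y_j\le n^2$. Hoeffding's inequality is too weak here. Bernstein's inequality, or a Chernoff bound for sums of bounded nonnegative variables after scaling by $4n/\delta$, gives
\[
  \PP\Bigl(\sum_j Y_j> 2n^2\Bigr)\le \exp\Bigl(-c\,\frac{n^2}{4n/\delta}\Bigr)=\exp(-c\delta n).
\]
On this event, $\|M'\|_{HS}^2=\sum_jY_j\le 2n^2$. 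Since $i\,s_i(M')^2\le\|M'\|_{HS}^2$ for every $i$,
\[
  s_{k-k_1+1}(M')^2\le \frac{2n^2}{\delta n/2}=\frac{4n}{\delta}.
\]
Hence $s_k(M)\le 2\delta^{-1/2}\sqrt n$.

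\textbf{Step 3 (conclusion).} A union bound over the two bad events from Steps 1 and 2 gives the claimed probability $\exp(-c\delta n)$, with absolute $c$ and with $C=2$. The final sentence of the lemma follows because singular values are nonincreasing: $s_j(M)\le s_k(M)$ for every $j\ge k$.

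The main obstacle is Step 2. The Hilbert--Schmidt norm of the trimmed matrix must be controlled at the exponential rate $e^{-c\delta n}$ using only second moments, which forces a Chernoff-type bound for bounded nonnegative summands. The truncation level $4n/\delta$ is chosen precisely so that this bound yields exponent of order $\delta n$.
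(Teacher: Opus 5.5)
Your argument is the paper's proof in all essentials. You trim the columns with $\|X_j\|_2^2\gtrsim n/\delta$, bound their number by Chernoff and the truncated Hilbert--Schmidt mass by Bernstein (which gives the exponent $\delta n$ rather than $\delta^2 n$), then shift the index and use $i\,s_i^2\le\|\cdot\|_{HS}^2$. Your Weyl step with zeroed columns is equivalent to the paper's Cauchy interlacing after deleting those columns.

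The step that needs repair is the reduction to $\delta n\ge 2$. It is not trivial, and $2$ is not the right threshold.

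\emph{Why it is not trivial.} For $\delta n<2$ the target bound $e^{-c\delta n}\ge e^{-2c}$ is still a constant strictly below $1$. So you must show the probability is bounded by a constant less than $1$.

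\emph{Why $2$ is the wrong threshold.} Steps 1 and 2 are valid for every value of $\delta n$. What actually breaks for small $\delta n$ is Step 3. There the union bound gives $e^{-c_1\delta n}+e^{-c_2\delta n}$, and this is at most $e^{-c\delta n}$ only once $\delta n\ge C_0$. For example, take $c=\min\{c_1,c_2\}/2$ and $C_0=2\ln 2/\min\{c_1,c_2\}$. With your Chernoff and Bernstein constants, $C_0$ is far larger than $2$.

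\emph{The fix.} For $\delta n\le C_0$, use Markov's inequality together with $k\,s_k(M)^2\le\|M\|_{HS}^2$ and $k\ge\delta n$, as the paper does:
\[
  \PP\bigl(s_k(M)>2\delta^{-1/2}\sqrt n\bigr)
  \le\frac{\EE\|M\|_{HS}^2}{4kn/\delta}
  \le\frac{\delta n}{4k}\le\frac14\le e^{-c\delta n}.
\]
The last inequality holds because $c\,C_0=\ln 2$, so $e^{-c\delta n}\ge\frac12$ whenever $\delta n\le C_0$. With this addition, your constants $C=2$ and $c=\min\{c_1,c_2\}/2$ work for all $n$ and $\delta$.
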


Deleting \(r\) columns can shift the singular-value index by at most \(r\),
while the remaining Hilbert--Schmidt mass controls all singular values below
that shifted index.

\begin{proof}
Fix $\delta\in(0,1)$ and set
\[
  L:=\frac{8}{\delta}.
\]
Define
\[
  \mathcal B:=\{j\in[n]:\|X_j\|_2^2>Ln\},
  \qquad
  r:=|\mathcal B|.
\]
Since $\EE\|X_j\|_2^2\le n$, Markov's inequality gives
\[
  \bbP(j\in\mathcal B)\le \frac1L=\frac{\delta}{8}.
\]
The indicators $\mathbf 1_{\{j\in\mathcal B\}}$ are independent. Therefore,
by a Chernoff bound,
\begin{equation}\label{eq:number-long-columns}
  \bbP\!\left(r>\frac{\delta n}{4}\right)
  \le
  \exp(-c\delta n).
\end{equation}

Set
\[
  Y_j:=\|X_j\|_2^2\mathbf 1_{\{\|X_j\|_2^2\le Ln\}}.
\]
Then the variables $Y_j$ are independent and satisfy
\[
  0\le Y_j\le Ln,
  \qquad
  \sum_{j=1}^n\EE Y_j\le n^2.
\]
Moreover,
\[
  \sum_{j=1}^n\operatorname{Var}(Y_j)
  \le
  \sum_{j=1}^n\EE Y_j^2
  \le
  Ln\sum_{j=1}^n\EE Y_j
  \le
  Ln^3.
\]
Bernstein's inequality yields
\begin{equation}\label{eq:trimmed-HS-bound}
  \bbP\!\left(
    \sum_{j=1}^nY_j>2n^2
  \right)
  \le
  \exp\!\left(-c\frac{n}{L}\right)
  \le
  \exp(-c\delta n).
\end{equation}

Assume that the complementary events in \eqref{eq:number-long-columns} and
\eqref{eq:trimmed-HS-bound} occur. Let
\[
  \mathcal G:=[n]\setminus\mathcal B,
\]
and let $M_{\mathcal G}$ be the submatrix obtained by retaining only the
columns indexed by $\mathcal G$. Then
\[
  r\le\frac{\delta n}{4},
  \qquad
  \|M_{\mathcal G}\|_{HS}^2
  =
  \sum_{j=1}^nY_j
  \le 2n^2.
\]
Set $k_0:=\lceil\delta n\rceil$. Since
$M_{\mathcal G}^*M_{\mathcal G}$ is a principal submatrix of $M^*M$, Cauchy
interlacing gives
\[
  s_{k_0}(M)\le s_{k_0-r}(M_{\mathcal G}).
\]
Furthermore,
\[
  k_0-r\ge\frac{3\delta n}{4}.
\]
Using $s_j(N)\le\|N\|_{HS}/\sqrt j$, we obtain
\[
  s_{k_0}(M)
  \le
  \frac{\|M_{\mathcal G}\|_{HS}}{\sqrt{k_0-r}}
  \le
  \sqrt{\frac{8}{3\delta}}\sqrt n.
\]
Monotonicity gives the same estimate for every $k\ge k_0$. The case of
$\delta n\le C_0$, where $C_0$ is the fixed constant implicit in the preceding
argument, follows from Markov's inequality. Indeed,
\[
  k_0s_{k_0}(M)^2\le\|M\|_{HS}^2,
\]
and therefore
\[
  \bbP\!\left(
    s_{k_0}(M)>C\delta^{-1/2}\sqrt n
  \right)
  \le
  \frac{\delta\,\EE\|M\|_{HS}^2}{C^2k_0n}
  \le
  \frac1{C^2}.
\]
If $\delta n$ is bounded by this fixed constant, choosing $C$ large enough
makes this at most
$\exp(-c\delta n)$.
\end{proof}

\begin{proposition}[Fixed-scale estimate away from the upper edge]
\label{prop:fixed-scale-away-edge}
For every $\delta\in(0,1)$, there exist constants
$c_\delta,C_\delta>0$, depending only on $a,b,\delta$, such that
\[
  \bbP\!\left(
    s_{n+1-l}(A)>C_\delta\frac{l}{\sqrt n}
  \right)
  \le
  \exp(-c_\delta l)
\]
for every integer $1\le l\le(1-\delta)n$.
\end{proposition}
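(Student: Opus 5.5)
We split the range of $l$ at $c_0 n$, where $c_0=c_0(a,b)$ is the constant of Proposition~\ref{prop:fixed-scale}. When $1\le l\le c_0n$, Proposition~\ref{prop:fixed-scale} already gives
\[
  \bbP\bigl(s_{n+1-l}(A)\ge C_1 l/\sqrt n\bigr)\le e^{-C_2 l},
\]
so it suffices to take $C_\delta\ge C_1$ and $c_\delta\le C_2$. No dependence on $\delta$ enters in this range.

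For $c_0n<l\le(1-\delta)n$ we use Lemma~\ref{lem:bulk-trimming} with $M=A$. The columns of $A$ are independent, and (A1) gives $\EE\|A^j\|_2^2=n$, so the lemma applies. The index satisfies $n+1-l\ge \delta n+1\ge\lceil\delta n\rceil$. The lemma therefore gives, outside an event of probability at most $\exp(-c\delta n)$,
\[
  s_{n+1-l}(A)\le C\delta^{-1/2}\sqrt n.
\]
Since $l>c_0n$, we have $\sqrt n< l/(c_0\sqrt n)$. Hence on the same event
\[
  s_{n+1-l}(A)\le \frac{C}{c_0\sqrt\delta}\,\frac{l}{\sqrt n}.
\]
The failure probability is also controlled in terms of $l$: because $l\le n$, we get $\exp(-c\delta n)\le\exp(-c\delta l)$.

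We then set
\[
  C_\delta:=\max\{C_1,\;C/(c_0\sqrt\delta)\},\qquad c_\delta:=\min\{C_2,\;c\delta\}.
\]
These constants depend only on $a,b,\delta$, and with them both ranges give the stated bound.

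The only point needing care is the degenerate regime of Lemma~\ref{lem:bulk-trimming}, where $\delta n$ is bounded and the lemma yields only a constant failure probability rather than exponential decay. In that regime $n\le C_0/\delta$, so $l\le n$ is bounded by a constant depending on $\delta$. The target bound $\exp(-c_\delta l)$ is then at least a positive constant depending only on $\delta$. The lemma's Markov argument is
\[
  \bbP\bigl(s_{k_0}(A)>C\delta^{-1/2}\sqrt n\bigr)\le \frac{1}{C^2},
\]
and choosing $C$ large in terms of $\delta$ makes $1/C^2$ smaller than that constant. Equivalently, we shrink $c_\delta$ so that $e^{-c_\delta l}$ dominates this Markov bound for all $l\le C_0/\delta$. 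Enlarging $C_\delta$ and shrinking $c_\delta$ accordingly completes the proof.
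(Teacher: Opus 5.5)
Your proposal is correct and follows the paper's proof essentially verbatim: you split at the range constant of Proposition~\ref{prop:fixed-scale}, use that proposition for small $l$, and for $l>c_0n$ apply Lemma~\ref{lem:bulk-trimming} to $A$ with $n+1-l\ge\lceil\delta n\rceil$, then convert via $\sqrt n\le l/(c_0\sqrt n)$ and $e^{-c\delta n}\le e^{-c\delta l}$. Your extra paragraph on the bounded-$\delta n$ regime is harmless but redundant, since the lemma's own proof already handles that case with the Markov bound.
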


\begin{proof}
Let $\gamma=\gamma(a,b)>0$ denote the range constant $c_0$ from
Proposition~\ref{prop:fixed-scale}.

If $l\le\gamma n$, the conclusion follows directly from
Proposition~\ref{prop:fixed-scale}. Assume now that
\[
  \gamma n<l\le(1-\delta)n
\]
and put
\[
  k:=n+1-l.
\]
Then $k\ge\lceil\delta n\rceil$. The columns of $A$ are independent and
satisfy $\EE\|A^j\|_2^2=n$. Lemma~\ref{lem:bulk-trimming} and monotonicity
therefore give, outside an event of probability at most $\exp(-c\delta n)$,
\[
  s_{n+1-l}(A)
  =
  s_k(A)
  \le
  s_{\lceil\delta n\rceil}(A)
  \le
  C\delta^{-1/2}\sqrt n.
\]
Since $l>\gamma n$,
\[
  \sqrt n
  \le
  \frac1\gamma\frac{l}{\sqrt n},
\]
and hence
\[
  s_{n+1-l}(A)
  \le
  \frac{C}{\gamma\sqrt\delta}\frac{l}{\sqrt n}.
\]
Finally, since $l\le n$,
\[
  \exp(-c\delta n)\le\exp(-c\delta l).
\]
Combining the two ranges and adjusting constants proves the proposition.
\end{proof}

\subsection{Recovering the deviation parameter by monotonicity}
\label{subsec:monotonicity-deviation}

Fix $\delta\in(0,1)$, $t\ge1$, and an integer $l$ satisfying
\[
  1\le l\le(1-\delta)n.
\]
Set
\[
  r:=\left\lfloor\min\{tl,(1-\delta)n\}\right\rfloor.
\]
Since $t\ge1$ and $l\le(1-\delta)n$, we have \(r\ge l\). Also
$1\le r\le(1-\delta)n$, so Proposition~\ref{prop:fixed-scale-away-edge}
applies at level $r$:
\[
  \bbP\!\left(
    s_{n+1-r}(A)>C_\delta\frac{r}{\sqrt n}
  \right)
  \le
  \exp(-c_\delta r).
\]
Since $l\le r$, monotonicity gives
\[
  s_{n+1-l}(A)\le s_{n+1-r}(A).
\]
Moreover, \(r\le tl\). Consequently,
\[
  \bbP\!\left(
    s_{n+1-l}(A)>C_\delta t\frac{l}{\sqrt n}
  \right)
  \le
  \exp(-c_\delta r).
\]
Since \(\min\{tl,(1-\delta)n\}\ge1\), we have
\[
  r\ge \frac12\min\{tl,(1-\delta)n\}.
\]
In addition,
\[
  \min\{tl,(1-\delta)n\}
  \ge
  (1-\delta)\min\{tl,n\}.
\]
Therefore,
\[
  \exp(-c_\delta r)
  \le
  \exp\!\left(
    -\frac{c_\delta(1-\delta)}{2}
    \min\{tl,n\}
  \right).
\]
Absorbing the factor \((1-\delta)/2\) into the constants proves
Theorem~\ref{thm:main}.

\begin{remark}
The two parts of the argument play different roles.
Proposition~\ref{prop:fixed-scale} contains the lower-edge inverse-matrix
argument, while Lemma~\ref{lem:bulk-trimming} is used only to complete the
range where $l$ is proportional to $n$. No part of the inverse-matrix
construction needs to be extended into this bulk range.
\end{remark}

\subsection{Proofs of the corollaries}

\begin{proof}[Proof of Corollary~\ref{cor:square-profile}]
The upper bound is Theorem~\ref{thm:main} with $t=1$. For the lower bound,
fix any $n\times(n-l+1)$ column submatrix $A_0$ of $A$. By Cauchy interlacing,
\[
  s_{n+1-l}(A)\ge s_{\min}(A_0).
\]
Theorem~\ref{smallest_sing_val_lower}, applied to $A_0$, gives, for all
sufficiently small $\varepsilon>0$,
\[
  \PP\!\left(
    s_{\min}(A_0)
    \le
    \varepsilon(\sqrt{n+1}-\sqrt{n-l+1})
  \right)
  \le
  (C\varepsilon\log(1/\varepsilon))^l+e^{-cn}.
\]
For every $1\le l\le n$,
\[
  \sqrt{n+1}-\sqrt{n-l+1}
  =
  \frac{l}
       {\sqrt{n+1}+\sqrt{n-l+1}}
  \asymp
  \frac l{\sqrt n}.
\]
Choosing $\varepsilon>0$ sufficiently small gives the claimed lower tail.
Combining the two estimates proves the corollary.
\end{proof}

\begin{proof}[Proof of Corollary~\ref{cor:intermediate-expectation}]
Fix $\delta\in(0,1)$ and $1\le l\le(1-\delta)n$, and set
$X:=s_{n+1-l}(A)$. By Theorem~\ref{thm:main}, after increasing $C$ if
necessary,
\[
  \PP(X>x)
  \le
  \exp(-c\min\{x\sqrt n,n\})
  \qquad
  \text{for }x\ge C\frac l{\sqrt n}.
\]
Hence
\[
\begin{aligned}
  \EE X
  &\le
  C\frac l{\sqrt n}
  +\int_{Cl/\sqrt n}^{C\sqrt n}\PP(X>x)\,dx
  +\EE\bigl[X\mathbf 1_{\{X>C\sqrt n\}}\bigr]  \\
  &\le
  C\frac l{\sqrt n}
  +\int_0^{\sqrt n} e^{-cx\sqrt n}\,dx
  +\EE\bigl[X\mathbf 1_{\{X>C\sqrt n\}}\bigr].
\end{aligned}
\]
The integral is $O(n^{-1/2})$. For the far tail, $X\le\|A\|_{HS}$ and
$\EE\|A\|_{HS}^2=n^2$, while Theorem~\ref{thm:main} gives
$\PP(X>C\sqrt n)\le e^{-cn}$. Thus Cauchy--Schwarz yields
\[
  \EE\bigl[X\mathbf 1_{\{X>C\sqrt n\}}\bigr]
  \le
  (\EE X^2)^{1/2}\PP(X>C\sqrt n)^{1/2}
  \le
  ne^{-cn/2}.
\]
Renaming $c$ gives $ne^{-cn}$.
This is dominated by $Cl/\sqrt n$, and the claim follows.
\end{proof}

\begin{proof}[Proof of Corollary~\ref{cor:rectangular-profile}]
Let $G$ be an independent $N\times(N-n)$ random matrix whose entries are
independent and satisfy the same uniform assumptions with parameters \(a,b\).
Set $\widetilde B=[B\ \ G]\in\mathbb{R}^{N\times N}$.
Since $B^*B$ is a principal submatrix of $\widetilde B^{\,*}\widetilde B$,
Cauchy interlacing gives
\[
  s_{n+1-l}(B)
  \le
  s_{n+1-l}(\widetilde B)
  =
  s_{N+1-(N-n+l)}(\widetilde B).
\]
Applying Theorem~\ref{thm:main} to $\widetilde B$ at level $N-n+l$ gives
\[
  \PP\!\left(s_{n+1-l}(B)>Ct\frac {N-n+l}{\sqrt N}\right)
  \le
  \exp(-c\min\{t(N-n+l),N\}).
\]
For $N-n+l\le(1-\delta)N$,
\[
\begin{aligned}
  \sqrt{N+1}-\sqrt{n-l+1}
  &=
  \frac{N-n+l}{\sqrt{N+1}+\sqrt{n-l+1}}  \\
  &\asymp_\delta
  \frac{N-n+l}{\sqrt N}.
\end{aligned}
\]

For the lower estimate, fix an $N\times(n-l+1)$ column submatrix $B_0$ of
$B$. Interlacing gives $s_{n+1-l}(B)\ge s_{\min}(B_0)$. Let
$\varepsilon_t=\varepsilon_0e^{-t}$, where $\varepsilon_0>0$ is a small constant
depending only on $a$ and $b$. Theorem~\ref{smallest_sing_val_lower} gives
\[
  \PP\!\left(
    s_{\min}(B_0)
    \le
    \varepsilon_0e^{-t}
    \bigl(\sqrt{N+1}-\sqrt{n-l+1}\bigr)
  \right)
  \le
  \left(C\varepsilon_0e^{-t}
       \log\frac{e^t}{\varepsilon_0}\right)^{N-n+l}
  +e^{-cN}.
\]
Choosing $\varepsilon_0$ small enough and decreasing $c$ if necessary gives
\[
  \left(C\varepsilon_0e^{-t}
       \log\frac{e^t}{\varepsilon_0}\right)^{N-n+l}
  \le
  \exp(-ct(N-n+l)),
  \qquad t\ge1.
\]
Also \(e^{-cN}\) is absorbed by
\(\exp(-c\min\{t(N-n+l),N\})\), after decreasing \(c\). Combining the upper and
lower estimates proves
\eqref{eq:rectangular-profile}.
\end{proof}

\bibliographystyle{alpha}
\bibliography{bibliography}

@article {LivTikVer,
    AUTHOR = {Livshyts, Galyna V. and Tikhomirov, Konstantin and Vershynin,
              Roman},
     TITLE = {The smallest singular value of inhomogeneous square random
              matrices},
   JOURNAL = {Ann. Probab.},
  FJOURNAL = {The Annals of Probability},
    VOLUME = {49},
      YEAR = {2021},
    NUMBER = {3},
     PAGES = {1286--1309},
      ISSN = {0091-1798},
   MRCLASS = {60B20 (15B52)},
  MRNUMBER = {4255145},
MRREVIEWER = {Asad Lodhia},
       DOI = {10.1214/20-aop1481},
       URL = {https://doi.org/10.1214/20-aop1481},
}

@article {RV08,
    AUTHOR = {Rudelson, Mark and Vershynin, Roman},
     TITLE = {The {L}ittlewood-{O}fford problem and invertibility of random
              matrices},
   JOURNAL = {Adv. Math.},
  FJOURNAL = {Advances in Mathematics},
    VOLUME = {218},
      YEAR = {2008},
    NUMBER = {2},
     PAGES = {600--633},
      ISSN = {0001-8708},
   MRCLASS = {60E15 (60B20)},
  MRNUMBER = {2407948},
MRREVIEWER = {Ben Joseph Green},
       DOI = {10.1016/j.aim.2008.01.010},
       URL = {https://doi.org/10.1016/j.aim.2008.01.010},
}

@article {GL21,
    AUTHOR = {Livshyts, Galyna V.},
     TITLE = {The smallest singular value of heavy-tailed not necessarily
              i.i.d. random matrices via random rounding},
   JOURNAL = {J. Anal. Math.},
  FJOURNAL = {Journal d'Analyse Math\'{e}matique},
    VOLUME = {145},
      YEAR = {2021},
    NUMBER = {1},
     PAGES = {257--306},
      ISSN = {0021-7670},
   MRCLASS = {60B20},
  MRNUMBER = {4361906},
MRREVIEWER = {Khanh Duy Trinh},
       DOI = {10.1007/s11854-021-0183-2},
       URL = {https://doi.org/10.1007/s11854-021-0183-2},
}

@article{Tatarko18,
  author  = {Tatarko, Kateryna},
  title   = {An upper bound on the smallest singular value of a square random matrix},
  journal = {Journal of Complexity},
  volume  = {48},
  year    = {2018},
  pages   = {119--128},
  doi     = {10.1016/j.jco.2018.06.002}
}

@misc{Dabagia24,
      title={The smallest singular value of inhomogenous random rectangular matrices}, 
      author={Max Dabagia and Manuel Fernandez},
      year={2024},
      eprint={2408.14389},
      archivePrefix={arXiv},
      primaryClass={math.PR},
      url={https://arxiv.org/abs/2408.14389}, 
}

@article{Talagrand96NewLook,
  author  = {Talagrand, Michel},
  title   = {A new look at independence},
  journal = {Annals of Probability},
  volume  = {24},
  number  = {1},
  year    = {1996},
  pages   = {1--34},
  doi     = {10.1214/aop/1042644705}
}

@article{Rogozin61,
  author  = {Rogozin, B. A.},
  title   = {On the increase of dispersion of sums of independent random variables},
  journal = {Theory of Probability \& Its Applications},
  volume  = {6},
  number  = {1},
  year    = {1961},
  pages   = {97--99},
  doi     = {10.1137/1106010}
}

@article {Feng_Wei_intermediate,
    AUTHOR = {Wei, Feng},
     TITLE = {Upper bound for intermediate singular values of random
              matrices},
   JOURNAL = {J. Math. Anal. Appl.},
  FJOURNAL = {Journal of Mathematical Analysis and Applications},
    VOLUME = {445},
      YEAR = {2017},
    NUMBER = {2},
     PAGES = {1530--1547},
      ISSN = {0022-247X},
   MRCLASS = {60B20},
  MRNUMBER = {3545257},
       DOI = {10.1016/j.jmaa.2016.08.007},
       URL = {https://doi.org/10.1016/j.jmaa.2016.08.007},
}

@misc{fernandezDistance2025,
      title={A distance theorem for inhomogenous random rectangular matrices}, 
      author={Manuel Fernandez V},
      year={2025},
      eprint={2408.06309},
      archivePrefix={arXiv},
      primaryClass={math.PR},
      url={https://arxiv.org/abs/2408.06309}, 
}

@book{SteinWeiss,
  author    = {Elias M. Stein and Guido Weiss},
  title     = {Introduction to {F}ourier Analysis on {E}uclidean Spaces},
  series    = {Princeton Mathematical Series},
  number    = {32},
  publisher = {Princeton University Press},
  address   = {Princeton, NJ},
  year      = {1971},
}

@article{Szarek90,
  author  = {Stanis{\l}aw J. Szarek},
  title   = {Spaces with large distance to {$\ell^\infty_n$} and
             random matrices},
  journal = {American Journal of Mathematics},
  volume  = {112},
  number  = {6},
  pages   = {899--942},
  year    = {1990},
  doi     = {10.2307/2374731},
  url     = {https://doi.org/10.2307/2374731}
}

@article{RV09,
  author  = {Mark Rudelson and Roman Vershynin},
  title   = {The smallest singular value of a rectangular random matrix},
  journal = {Communications on Pure and Applied Mathematics},
  volume  = {62},
  number  = {12},
  pages   = {1707--1739},
  year    = {2009},
  doi     = {10.1002/cpa.20294},
  url     = {https://doi.org/10.1002/cpa.20294}
}

@article{RV08_least,
  author  = {Mark Rudelson and Roman Vershynin},
  title   = {The least singular value of a random square matrix
             is {$O(n^{-1/2})$}},
  journal = {Comptes Rendus Math\'{e}matique},
  volume  = {346},
  number  = {15--16},
  pages   = {893--896},
  year    = {2008},
  doi     = {10.1016/j.crma.2008.07.009},
  url     = {https://doi.org/10.1016/j.crma.2008.07.009}
}

@article{TV10_smallest,
  author  = {Tao, Terence and Vu, Van H.},
  title   = {Random matrices: the distribution of the smallest singular values},
  journal = {Geom. Funct. Anal.},
  volume  = {20},
  number  = {1},
  pages   = {260--297},
  year    = {2010},
  doi     = {10.1007/s00039-010-0057-8},
  url     = {https://doi.org/10.1007/s00039-010-0057-8}
}

@article{NguyenVu18,
  author  = {Nguyen, Hoi H. and Vu, Van H.},
  title   = {Normal vector of a random hyperplane},
  journal = {International Mathematics Research Notices},
  volume  = {2018},
  number  = {6},
  pages   = {1754--1778},
  year    = {2018},
  doi     = {10.1093/imrn/rnw273},
  url     = {https://doi.org/10.1093/imrn/rnw273}
}

@article{CacciapuotiMaltsevSchlein13,
  author  = {Cacciapuoti, Claudio and Maltsev, Anna and Schlein, Benjamin},
  title   = {Local {M}archenko--{P}astur law at the hard edge of sample covariance matrices},
  journal = {Journal of Mathematical Physics},
  volume  = {54},
  number  = {4},
  pages   = {043302},
  year    = {2013},
  doi     = {10.1063/1.4801856},
  url     = {https://doi.org/10.1063/1.4801856}
}

@article {AGLPT08,
    AUTHOR = {Adamczak, Rados\l aw and Gu\'{e}don, Olivier and Litvak,
              Alexander and Pajor, Alain and Tomczak-Jaegermann, Nicole},
     TITLE = {Smallest singular value of random matrices with independent
              columns},
   JOURNAL = {C. R. Math. Acad. Sci. Paris},
    VOLUME = {346},
      YEAR = {2008},
    NUMBER = {15-16},
     PAGES = {853--856},
       DOI = {10.1016/j.crma.2008.07.011},
       URL = {https://doi.org/10.1016/j.crma.2008.07.011},
}

@article {BR17,
    AUTHOR = {Basak, Anirban and Rudelson, Mark},
     TITLE = {Invertibility of sparse non-{H}ermitian matrices},
   JOURNAL = {Adv. Math.},
    VOLUME = {310},
      YEAR = {2017},
     PAGES = {426--483},
       DOI = {10.1016/j.aim.2017.02.009},
       URL = {https://doi.org/10.1016/j.aim.2017.02.009},
}

@article {Edelman88,
    AUTHOR = {Edelman, Alan},
     TITLE = {Eigenvalues and condition numbers of random matrices},
   JOURNAL = {SIAM J. Matrix Anal. Appl.},
    VOLUME = {9},
      YEAR = {1988},
    NUMBER = {4},
     PAGES = {543--560},
       DOI = {10.1137/0609055},
       URL = {https://doi.org/10.1137/0609055},
}

@article {Edelman91,
    AUTHOR = {Edelman, Alan},
     TITLE = {The distribution and moments of the smallest eigenvalue of a random matrix of {W}ishart type},
   JOURNAL = {Linear Algebra Appl.},
    VOLUME = {159},
      YEAR = {1991},
     PAGES = {55--80},
       DOI = {10.1016/0024-3795(91)90076-9},
       URL = {https://doi.org/10.1016/0024-3795(91)90076-9},
}

@article{GLPTJ17,
    AUTHOR = {Gu\'{e}don, Olivier and Litvak, Alexander E. and Pajor, Alain and Tomczak-Jaegermann, Nicole},
     TITLE = {On the interval of fluctuation of the singular values of random matrices},
   JOURNAL = {J. Eur. Math. Soc. (JEMS)},
    VOLUME = {19},
      YEAR = {2017},
    NUMBER = {5},
     PAGES = {1469--1505},
       DOI = {10.4171/JEMS/697},
       URL = {https://doi.org/10.4171/JEMS/697},
}

@article {LR12,
    AUTHOR = {Litvak, Alexander E. and Rivasplata, Omar},
     TITLE = {Smallest singular value of sparse random matrices},
   JOURNAL = {Studia Math.},
    VOLUME = {212},
      YEAR = {2012},
    NUMBER = {3},
     PAGES = {195--218},
       DOI = {10.4064/sm212-3-1},
       URL = {https://doi.org/10.4064/sm212-3-1},
}

@article{LPRT05,
    AUTHOR = {Litvak, A. E. and Pajor, A. and Rudelson, M. and Tomczak-Jaegermann, N.},
     TITLE = {Smallest singular value of random matrices and geometry of random polytopes},
   JOURNAL = {Adv. Math.},
    VOLUME = {195},
      YEAR = {2005},
    NUMBER = {2},
     PAGES = {491--523},
       DOI = {10.1016/j.aim.2004.08.004},
       URL = {https://doi.org/10.1016/j.aim.2004.08.004},
}

@article {RT18,
    AUTHOR = {Rebrova, Elizaveta and Tikhomirov, Konstantin},
     TITLE = {Coverings of random ellipsoids, and invertibility of matrices
              with i.i.d. heavy-tailed entries},
   JOURNAL = {Israel J. Math.},
    VOLUME = {227},
      YEAR = {2018},
    NUMBER = {2},
     PAGES = {507--544},
       DOI = {10.1007/s11856-018-1732-y},
       URL = {https://doi.org/10.1007/s11856-018-1732-y},
}

@article {Szarek91,
    AUTHOR = {Szarek, Stanis\l aw J.},
     TITLE = {Condition numbers of random matrices},
   JOURNAL = {J. Complexity},
    VOLUME = {7},
      YEAR = {1991},
    NUMBER = {2},
     PAGES = {131--149},
       DOI = {10.1016/0885-064X(91)90002-F},
       URL = {https://doi.org/10.1016/0885-064X(91)90002-F},
}

@article {TV09,
    AUTHOR = {Tao, Terence and Vu, Van H.},
     TITLE = {Inverse {L}ittlewood-{O}fford theorems and the condition
              number of random discrete matrices},
   JOURNAL = {Ann. of Math. (2)},
    VOLUME = {169},
      YEAR = {2009},
    NUMBER = {2},
     PAGES = {595--632},
       DOI = {10.4007/annals.2009.169.595},
       URL = {https://doi.org/10.4007/annals.2009.169.595},
}

@article{RebrovaVershynin2018,
  author  = {Rebrova, Elizaveta and Vershynin, Roman},
  title   = {Norms of random matrices: local and global problems},
  journal = {Advances in Mathematics},
  volume  = {324},
  pages   = {40--83},
  year    = {2018},
  doi     = {10.1016/j.aim.2017.11.001},
  url     = {https://doi.org/10.1016/j.aim.2017.11.001}
}

@article{JainSahSawhney2022,
  author  = {Jain, Vishesh and Sah, Ashwin and Sawhney, Mehtaab},
  title   = {Optimal and algorithmic norm regularization of random matrices},
  journal = {Proceedings of the American Mathematical Society},
  volume  = {150},
  number  = {10},
  pages   = {4503--4518},
  year    = {2022},
  doi     = {10.1090/PROC/15964},
  url     = {https://doi.org/10.1090/PROC/15964}
}

@article{BEKYY14,
  author  = {Bloemendal, Alex and Erd\H{o}s, L\'aszl\'o and Knowles, Antti and Yau, Horng-Tzer and Yin, Jun},
  title   = {Isotropic local laws for sample covariance and generalized {W}igner matrices},
  journal = {Electronic Journal of Probability},
  volume  = {19},
  number  = {33},
  pages   = {1--53},
  year    = {2014},
  doi     = {10.1214/EJP.v19-3054},
  url     = {https://doi.org/10.1214/EJP.v19-3054}
}

@article{PY14,
  author  = {Pillai, Natesh S. and Yin, Jun},
  title   = {Universality of covariance matrices},
  journal = {The Annals of Applied Probability},
  volume  = {24},
  number  = {3},
  pages   = {935--1001},
  year    = {2014},
  doi     = {10.1214/13-AAP939},
  url     = {https://doi.org/10.1214/13-AAP939}
}

@article{AEK14,
  author  = {Ajanki, Oskari H. and Erd\H{o}s, L\'aszl\'o and Kr\"uger, Torben},
  title   = {Local semicircle law with imprimitive variance matrix},
  journal = {Electronic Communications in Probability},
  volume  = {19},
  year    = {2014},
  doi     = {10.1214/ECP.v19-3121},
  url     = {https://doi.org/10.1214/ECP.v19-3121}
}

@article{KM23,
  author  = {Kafetzopoulos, Anastasis and Maltsev, Anna},
  title   = {Local {M}archenko--{P}astur law at the hard edge of the sample covariance ensemble},
  journal = {Journal of Mathematical Physics},
  volume  = {64},
  number  = {12},
  pages   = {123501},
  year    = {2023},
  doi     = {10.1063/5.0121895},
  url     = {https://doi.org/10.1063/5.0121895}
}
\end{document}